\def\NZQ{\Bbb}               
\def\NN{{\NZQ N}}
\def\RR{{\mathcal R}}
\def\KK{{\NZQ K}}
\def\FF{{\mathcal F}}
\def\AA{{\mathcal A}}
\def\MP{{\NZQ P}}
\def\MC{{\mathcal C}}
\def\MP{{\mathcal P}}
\def\MQ{{\mathcal Q}}
\def\MS{{\mathcal S}}
\def\MF{{\mathcal F}}
\def\MS{{\mathcal S}}
\def\FF{{\mathcal F}}
\def\MM{{\mathcal M}}
\newcommand{\PP}{\mathcal{P}}
\newcommand \rk{\operatorname{rk}}
\newcommand \reg{\operatorname{reg}}
\newcommand \soc{\operatorname{Soc}}
\newcommand \pd{\operatorname{pdim}}
\newcommand \ini{\operatorname{in}}
\newcommand{\rad}{1.5 pt}
\newtheorem{theorem}{Theorem}[section]
\newtheorem{definition}[theorem]{Definition}
\newtheorem{lemma}[theorem]{Lemma}
\newtheorem{proposition}[theorem]{Proposition}
\newtheorem{example}[theorem]{Example}
\newtheorem{question}[theorem]{Question}
\newtheorem{corollary}[theorem]{Corollary}
\newtheorem{conjecture}[theorem]{Conjecture}
\newtheorem{notation}[theorem]{Notation}
\newtheorem{remark}[theorem]{Remark}
\newcommand{\y}{y}
\begin{document}
\title[Level and pseudo-Gorenstein path polyominoes]{Level and pseudo-Gorenstein path polyominoes}

	\author[Giancarlo Rinaldo]{Giancarlo Rinaldo}
\email{giancarlo.rinaldo@unime.it}
\address{Department	of Mathematics, Informatics, Physics and Earth Science, University of Messina, Viale F. Stagno d’Alcontres, 31, Messina, 98166, Italy}
	\author[Francesco Romeo]{Francesco Romeo}
\email{francesco.romeo-3@unitn.it}
\address{Department of Mathematics, University of Trento, via Sommarive, 14, 38123 Povo (Trento), Italy}
\author[Rajib Sarkar]{Rajib Sarkar}
\email{rajib.sarkar63@gmail.com}
\address{Stat-Math Unit, Indian Statistical Institute, 203 B.T. Road, Kolkata--700108, India.}

\begin{abstract}
We classify path polyominoes which are level and pseudo-Gorenstein. Moreover, we compute all level and pseudo-Gorenstein simple thin polyominoes with rank less than or equal to 10. We also compute the regularity of the pseudo-Gorenstein simple thin polyominoes in relation to their rank.
\end{abstract}
\keywords{ Simple thin polyomino, path polyomino, Cohen-Macaulay, Level, pseudo-Gorenstein}
\thanks{AMS Subject Classification (2020): 13D02, 13C14, 05E40}
\maketitle

\section{Introduction}

The ideals generated by a subset of $2$-minors of an $m\times n$ matrix of indeterminates are an intensively-studied class of binomial ideals, due to their applications in algebraic statistics. Among these ideals, one finds the determinantal ideals, see, for
instance, \cite{BV} and its references to original articles, the ladder ideals introduced by
Conca in \cite{Co}, and the ideals of adjacent minors introduced by Hoşten and Sullivan
in \cite{HS}. More recently  two new ideals of this kind were introduced: the  binomial edge ideals  by Herzog et al. in \cite{HHHKR}, and independently by Ohtani in \cite{Oh} and polyomino ideals by Qureshi
in \cite{Qu}. A nice survey on these ideals is the book \cite{HHO}.  We recall that polyominoes arise from two-dimensional objects obtained by
joining edge by edge unitary squares, called polyominoes (see \cite{Go}). Over the last few years,
algebraic properties of polyomino ideals have been investigated.
Within them, one of the nicest result is that simple polyominoes, roughly speaking polyominoes without holes, are Cohen-Macaulay domains (see \cite{HM,QSS}).

Moreover, under the assumption that the polyomino $\MP$ is simple and thin, namely it does not contain a square tetromino as subpolyomino, the first and second authors, described the Hilbert series of $S/I_\PP$ (see Section \ref{sec:prel} for the definition of the ring) in terms of the rook complex defined on $\MP$ (see \cite{RR21}). Thanks to this observation, they characterized the Gorenstein simple thin polyominoes. Until now only some other cases of Gorenstein polyominoes are known (see \cite{An,CNU,EHQR,Qu,QRR}).

There are two interesting generalizations of Gorenstein rings: level rings (see \cite{St}) and  pseudo-Gorenstein rings (see \cite{EHHM}). Observing that  a ring  is Gorenstein if and only if the canonical module is a cyclic module, and hence generated in a single degree, the two generalizations naturally arise. In fact, if  one only
requires that the generators of the canonical module  are of the same degree, then the ring is called level,
and if one requires that there is only one generator of least degree, then we call it  pseudo-Gorenstein. A first study on this topic on binomial edge ideals has been carried on by the first and third authors (see \cite{RS23}).

 As stated by Herzog and others (see \cite{EHHM}), since pseudo-Gorenstein rings can be identified by the property that the leading coefficient of the
numerator polynomial of the Hilbert series is equal to 1, pseudo-Gorenstein rings are
much easier accessible than level rings.
This assertion is in particular true for simple thin polyominoes. In fact they can be described by the existence of a unique  configuration of non-attacking rooks of maximum cardinality (see Lemma \ref{lem:pseudo-gorenstein}).

Most of our paper is dedicated to the classification of pseudo-Gorenstein (res. level) simple polyominoes that are paths.

After giving the necessary preliminaries (see Section \ref{sec:prel}), in Section \ref{sec:pseudo} we give a complete description of pseudo-Gorenstein path polyominoes in terms of the non-existence of odd stair, where a stair is a sequence of intervals of length $2$ inside the path (see Theorem \ref{theo:pseudo-gorenstein}). Moreover, this gives us hints for sufficient conditions on  the non-levelness.

To reach the next goal, that is the classification of path polyominoes that are level, we prove that the socle of the ring $S/I_\MP$ modulo some linear forms is generated in the same degree. To this aim, is fundamental to define a nice labelling related to the cells of the path and a Gr\"{o}bner basis induced by it. In particular we prove that this  Gr\"{o}bner basis is quadratic. The labelling and results are in Section \ref{sec:grobner}.

In Section \ref{sec:level}, we describe a linear system of parameters and a ring of dimension 0 such that also in this case the non-attacking rooks defined on $\MP$ play a fundamental role. This gives us a way to classify all level path polyominoes such that their initial ideals are level, and therefore the rings are level, too. Moreover, through the fundamental Lemma \ref{lem:inSoc}, in Example \ref{exam: level but initial is not level}, we describe a path polyomino that is level and whose initial ideal is not level: the stair $S_5$. Through the same Lemma we prove that any path polyomino without stair is level. In the last part of the section by some technical lemmas we describe all pathological stairs, namely the bad stairs, whose existence in the path polyomino make it non-level. These are the ones having length 4,6, or length greater than or equal to 8. Thanks to bad stair we obtain the classification of path polyominoes that are level (see Theorem \ref{theo:level}).

In the last section, we focus on simple thin polyominoes. We share the computation that has been done regarding the classification of Gorenstein, level and pseudo-Gorenstein simple thin polyominoes. The result of the computation is downloadable from \cite{RRSw}. Inspired by this computation and from the results on path polyominoes, we describe the regularity of the pseudo-Gorenstein simple thin polyominoes in relation to their rank (see Theorem \ref{thm:pgrank}), and we also present a conjecture on level simple thin polyominoes.

\vskip 2mm
\noindent
\textbf{Acknowledgment:}
The third author thanks the University of Messina for the hospitality where this project has been started and the National Board for Higher Mathematics (NBHM), Department of Atomic Energy, Government of India, for financial support through Postdoctoral Fellowship.

\section{Polyominoes and Rook complex}\label{sec:prel}
Let $a = (i, j), b = (k, \ell) \in \NN^2$, with $i	\leq k$ and $j\leq\ell$, the set $[a, b]=\{(r,s) \in \NN^2 : i\leq r \leq k \text{ and } j \leq s \leq \ell\}$ is called an \textit{interval} of $\NN^2$. If $i<k$ and $j < \ell$, $[a,b]$ is called a \textit{proper interval}, and the elements $a,b,c,d$ are called corners of $[a,b]$, where $c=(i,\ell)$ and $d=(k,j)$. In particular, $a,b$ are called \textit{diagonal corners} and $c,d$ \textit{anti-diagonal corners} of $[a,b]$. The corner $a$ (resp. $c$) is also called the left lower (resp. upper) corner of $[a,b]$, and $d$ (resp. $b$) is the right lower (resp. upper) corner of $[a,b]$. 
A proper interval of the form $C = [a, a + (1, 1)]$ is called a \textit{cell}. Its vertices $V(C)$ are $a, a+(1,0), a+(0,1), a+(1,1)$. The sets $ \{a,a+(1,0)\}, \{a,a+(0,1)\},\{a+(1,0),a+(1,1)\},$ and $\{a+(0,1),a+(1,1)\}$ are called the \textit{edges} of C.
Let $\MP$ be a finite collection of cells of $\NN^2$, and let $C$ and $D$ be two cells of $\MP$. Then $C$ and $D$ are said to be \textit{connected} if there is a sequence of cells $C = C_1,\ldots, C_m = D$ of $\MP$ such that $C_i\cap C_{i+1}$ is an edge of $C_i$
for $i = 1,\ldots, m - 1$. In addition, if $C_i \neq C_j$ for all $i \neq j$, then $C_1,\dots, C_m$ is called a \textit{path} (connecting $C$ and $D$). A collection of cells $\MP$ is called a \textit{polyomino} if any two cells of $\MP$ are connected. We denote by $V(\MP)=\cup _{C\in \MP} V(C)$ the vertex set of $\MP$. The number of cells of $\MP$ is called the \textit{rank} of $\MP$, and it is denoted by $\rk(\MP)$. 

A polyomino $\mathcal{Q}$ is called a \textit{subpolyomino} of a polyomino $\MP$ if each cell belonging to $\MQ$ belongs to $\MP$, and we write $\MQ\subset \MP$.
A proper interval $[a,b]$ is called an \textit{inner interval} of $\MP$ if all cells of $[a,b]$ belong to $\MP$.
We say that a polyomino $\MP$ is \textit{simple} if for any two cells $C$ and $D$ of $\NN^2$ not belonging to $\MP$, there exists a path $C=C_{1},\dots,C_{m}=D$ such that $C_i \notin \MP$ for any $i=1,\dots,m$. 

We say that two vertices $a, b \in V(\MP)$ are \emph{diagonally opposite}, or simply \emph{opposite}, if they are either diagonal or antidiagonal corners of an inner interval of $\MP$. 

An maximal inner interval $[a,b]$ of $\MP$ with $a=(i,j)$, $b=(k,\ell)$ and either $k-i=1$ or $\ell-j=1$ is identified as a row or column of cells, called \emph{cell interval}.
Let $\PP$ be a simple polyomino. We say that a cell $C$ of $\PP$ is a \emph{leaf} if there exists an edge $\{u,v\}$ of $C$  such that $\{u,v\}\cap V(\PP \setminus \{C\})=\varnothing$. We call the vertices $u$ and $v$ \emph{leaf corners} of $C$. 

We say that a polyomino $\MP$  is \emph{thin} (see \cite{MRR2}, \cite{RR21}) if $\MP$ does not contain the square tetromino (see Figure \ref{fig:square}) as a subpolyomino.
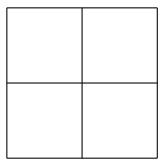
\begin{figure}[H]
\centering
\begin{tikzpicture}
\draw (0,0)--(2,0);
\draw (0,1)--(2,1);
\draw (0,2)--(2,2);

\draw (0,0)--(0,2);
\draw (1,0)--(1,2);
\draw (2,0)--(2,2);
\end{tikzpicture}\caption{The square tetromino}\label{fig:square}
\end{figure}
In a thin polyomino, any maximal interval is a cell interval. For $k\in \mathbb{N}$,
$k$ rooks on the cells of a polyomino $\PP$ are said to be \emph{non-attacking} if they do not lie on the same row or column of cells of $\PP$, pairwise.  The maximum number of non-attacking rooks is called the \emph{rook number} of $\PP$, denoted by $r(\PP)$. We identify the rooks that can be placed on $\PP$ with the cells of $\PP$. The set $\RR_\PP$ of sets of non-attacking rooks is a simplicial complex and it is called \emph{rook complex}. Let $r_k$ be the number of configurations of $k$-non attacking rooks. The polynomial 
\[
r_{\PP}(t)=\sum_{k=0}^{r(\PP)} r_k t^k
\] 
is called the {\em rook polynomial} of $\PP$.

We recall the following notation from \cite{Ro}. Let $\MC=\{I_1,\dots,I_s\}$ be the set of the maximal cell intervals of $\MP$.

\begin{definition}
Let $\MP$ be a polyomino. A subset $\AA \subset \MC$ is called a \emph{partition} of $\MP$ if 
\begin{enumerate}
    \item $\forall I,J \in \AA$ we have $I \cap J = \varnothing$;
    \item $\bigcup_{I \in A} I = \MP$.
\end{enumerate}
\end{definition}

\begin{definition}
An interval  $I=\{C_1,\ldots,C_m\} \in \MC$ is called \emph{embedded} if there exists $F=\{D_1,\ldots , D_m \} \in \RR_\PP$  such that for any $i \in \{1,\ldots, m\}$ the set $\{C_i, D_i\}$ is attacking.
\end{definition}
\begin{remark}\label{rmk:notemb}
Let $I\in \MC$ be a non-embedded interval. Then any facet $F \in \RR_\MP$ is such that $F \cap I \neq \varnothing$.
\end{remark}

 \begin{definition}
Let $\AA$ be a partition of $\MP$. If no interval of $\AA$ is embedded then $\AA$ is called \emph{super partition}. Moreover we call $\MP$ \emph{superpartitionable}
 \end{definition}
With the help of superpartitions one can characterize the polyominoes having a pure rook complex $\RR_\MP$.

 \begin{theorem}[\cite{Ro}, Theorem 3.10]\label{thm:pure}
 Let $\MP$ be a polyomino. The following are equivalent:
 \begin{itemize}
     \item[(1)] $\RR_\MP$ is pure and has dimension $r-1$;
     \item[(2)] $\MP$ admits a super partition $\AA$ with $|\AA|=r$.
 \end{itemize}
 \end{theorem}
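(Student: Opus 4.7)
The plan is to prove the two implications separately. The direction $(2) \Rightarrow (1)$ is essentially immediate from Remark \ref{rmk:notemb}, while $(1) \Rightarrow (2)$ requires a combinatorial construction starting from a facet of $\RR_\MP$.

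For $(2) \Rightarrow (1)$, let $\AA = \{I_1, \ldots, I_r\}$ be a super partition of $\MP$ and let $F \in \RR_\MP$ be an arbitrary facet. Since each $I_j$ is a cell interval, its cells pairwise attack, so $|F \cap I_j| \le 1$. Non-embeddedness of $I_j$ combined with Remark \ref{rmk:notemb} gives $F \cap I_j \neq \varnothing$, hence $|F \cap I_j| = 1$. Because $\AA$ partitions $\MP$ and $F \subseteq \MP$, summing over $j$ yields $|F| = r$. This shows that every facet has size $r$, so $\RR_\MP$ is pure of dimension $r-1$.

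For $(1) \Rightarrow (2)$, the plan is to fix a facet $F = \{R_1, \ldots, R_r\}$ of $\RR_\MP$ and build a super partition $\AA$ by selecting, for each $i$, one of the two maximal cell intervals through $R_i$: its horizontal run $H_i$ or its vertical run $V_i$. The non-attacking property of $F$ makes the $H_i$'s pairwise distinct and the $V_i$'s pairwise distinct. Purity forces every maximal cell interval $I \in \MC$ to meet $F$: otherwise $I$ would be non-embedded with respect to $F$, which via Remark \ref{rmk:notemb} applied in the contrapositive would allow extending $F$ by a cell of $I$ (with a careful choice of cell), contradicting maximality. Hence every cell of $\MP$ lies in at least one of the candidate intervals $\{H_1,\ldots,H_r,V_1,\ldots,V_r\}$. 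Cells that lie in only one candidate interval force an assignment; the remaining ambiguous cells (those at junctions of a horizontal and vertical candidate) can be resolved to produce a partition $\AA$ of size $r$.

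The main obstacle is verifying that no chosen interval is embedded. I would handle this by induction on $\rk(\MP)$, peeling off a pendant cell interval (for instance, one that contains a leaf cell of $\MP$ belonging to only a single maximal cell interval), applying the inductive hypothesis to the smaller polyomino $\MP'$, and extending the resulting super partition of $\MP'$ by adjoining the peeled interval. The base case of a single row or column of cells is trivial, since the polyomino itself forms a one-element super partition. The delicate point is that the peeling must preserve purity of the rook complex; this requires choosing the pendant interval so that its removal decreases the rook number by exactly one, which can be arranged by selecting an interval containing a rook of $F$ that is extremal among the $R_i$'s in the polyomino's combinatorial structure.
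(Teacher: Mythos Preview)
The paper does not prove this theorem; it is quoted from \cite{Ro} (Theorem~3.10 there) and used as a black box. So there is no in-paper proof to compare against, and I can only assess your argument on its own merits.

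Your direction $(2)\Rightarrow(1)$ is correct and is exactly the content of Remark~\ref{rmk:notemb}: each facet meets every interval of the super partition in exactly one cell, so all facets have size $r$.

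Your direction $(1)\Rightarrow(2)$ has a genuine gap. The claim ``purity forces every maximal cell interval $I\in\MC$ to meet $F$'' is false, and your justification has the logic of Remark~\ref{rmk:notemb} reversed: if $I\cap F=\varnothing$ then $I$ is \emph{embedded}, not non-embedded. A concrete counterexample is the $S$-tetromino (three intervals $I_1=\{C_1,C_2\}$, $I_2=\{C_2,C_3\}$, $I_3=\{C_3,C_4\}$): the rook complex is pure of dimension $1$, yet the facet $\{C_1,C_4\}$ misses $I_2$. The conclusion you draw from this claim, namely that every cell of $\MP$ lies in some $H_i$ or $V_i$, happens to be true, but for the much simpler reason that every cell is either a rook of $F$ or is attacked by one; you should replace the faulty argument by this observation.

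Beyond that, the remainder of $(1)\Rightarrow(2)$ is only a plan, not a proof. You assert that the choices of $H_i$ versus $V_i$ ``can be resolved'' to give a disjoint cover, but you give no mechanism for making these choices consistently, and you do not verify non-embeddedness of the chosen intervals except by deferring to an induction. That induction in turn assumes the existence of a pendant interval containing a leaf cell, which need not exist for arbitrary polyominoes (the statement is for general $\MP$, not only simple thin ones), and you do not establish that removing such an interval preserves purity of the rook complex or yields a polyomino at all. These are the substantive points that a proof of this direction has to address.
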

We now recall the definition of two important homological invariants which can be computed directly from the Betti table. Let $M$ be a finitely graded $S$-module and $\beta_{i,i+j}(M)$ the graded Betti numbers. The 
 \textit{projective dimension} of $M$ is defined as $\pd(M):=\max\{i : \beta_{i,i+j}(M) \neq 0 \text{ for some } j\}$
 and the \textit{Castelnuovo-Mumford regularity} (or simply, \textit{regularity}) of $M$ is defined as 
 $\reg(M):=\max \{j : \beta_{i,i+j}(M) \neq 0 \text{ for some } i\}.$ 

Let $\MP$ be a polyomino. Let $\mathbb{K}$ be an arbitrary field and $S = \mathbb{K}[x_v \ : \ v \in V(\MP)]$. The binomial $x_a x_b - x_c x_d\in S$ is called an \textit{inner 2-minor} of $\MP$ if $[a,b]$ is an inner interval of $\MP$, where $c,d$ are the anti-diagonal corners of $[a,b]$. We denote by $\MM$ the set of all inner 2-minors of $\MP$. The ideal generated by $\MM$ in $S$ is said to be the \textit{polyomino ideal} of $\MP$ and it is denoted by $I_\MP.$ 
The properties of $I_\MP$ and $S/I_\MP$ arise from combinatorial properties of $\MP$. In \cite{RR21}, the authors prove the following 
\begin{theorem}[Theorem 1.1]
Let $\PP$ be a simple thin polyomino such that the reduced Hilbert-Poincar\'e series of $S/I_\PP$  is
\[
\mathrm{HP}_{S/I_\PP}(t)=\frac{h(t)}{(1-t)^d}.
\]
Then $h(t)$ is the rook polynomial of $\PP$. 
\end{theorem}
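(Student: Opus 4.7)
The plan is to pass to an initial ideal that is a Stanley--Reisner ideal, identify its simplicial complex combinatorially, and match its $h$-polynomial with the rook polynomial by a deletion argument on leaf cells.

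First I would fix a monomial order singling out the diagonal monomials $x_a x_b$ of each inner $2$-minor $x_a x_b - x_c x_d$ as the leading terms; a lexicographic order refining the natural left-to-right, bottom-to-top ordering on $V(\MP)$ works. Applying Buchberger's criterion to pairs of generators supported on overlapping inner intervals (in a thin polyomino the intersection pattern of inner intervals is severely restricted) one checks these generators form a Gröbner basis, so $\mathrm{in}(I_\MP)$ is the squarefree monomial ideal generated by the products $x_a x_b$ over pairs $\{a,b\}$ of diagonal corners of inner intervals. Therefore $S/\mathrm{in}(I_\MP)=\K[\Delta]$, where $\Delta$ is the simplicial complex on $V(\MP)$ whose faces are vertex sets containing no such pair.

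Second, since $\MP$ is simple and thin, a super partition of $\MP$ with $r(\MP)$ parts exists, and by Theorem \ref{thm:pure} the rook complex $\RR_\MP$ is pure of dimension $r(\MP)-1$. Using this, I would show $\Delta$ is pure by producing, for each maximum non-attacking rook configuration, a facet of $\Delta$ obtained by choosing appropriate corner vertices of the cells; the purity dimension computation then pins down the Krull dimension $d=\dim S/I_\MP$. Since Hilbert series are preserved under Gröbner degeneration, it now suffices to show the $h$-polynomial of $\Delta$ equals the rook polynomial $r_\MP(t)$.

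Third, I would induct on $\rk(\MP)$. A thin polyomino always has a leaf cell $C$ with leaf corners $u,v$; let $I$ be the cell interval of $\MP$ containing $C$ and set $\MP':=\MP\setminus I$. One has the deletion-type identity
\[
r_\MP(t)=r_{\MP\setminus\{C\}}(t)+t\cdot r_{\MP'}(t),
\]
separating rook configurations by whether they place a rook on $C$. On the algebraic side, decomposing $\Delta$ according to whether a facet uses the leaf corner of $C$ opposite to the rest of $\MP$, one obtains an analogous recurrence for the $h$-polynomial of $\Delta$; the thinness hypothesis ensures $C$ meets the rest of $\MP$ along at most one edge, making the combinatorics local. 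Matching the two recurrences after normalising by $(1-t)^d$ closes the induction.

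The main obstacle will be the bookkeeping in the inductive step: keeping track of exactly how many free vertices are lost when deleting a leaf cell versus a whole cell interval, so that the denominators $(1-t)^d$ line up correctly and the two recurrences match term-by-term. Thinness is crucial precisely because it prevents the square tetromino configuration, which is the only way a leaf cell could share two edges with $\MP\setminus\{C\}$ and thereby break the clean deletion identity.
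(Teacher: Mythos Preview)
The paper does not prove this theorem; it quotes it from \cite{RR21} as background. So there is no in-paper argument to compare against, and your proposal must be judged on its own merits.

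Your second step contains a genuine error. You assert that ``since $\MP$ is simple and thin, a super partition of $\MP$ with $r(\MP)$ parts exists, and by Theorem~\ref{thm:pure} the rook complex $\RR_\MP$ is pure of dimension $r(\MP)-1$.'' This is false: simple thin polyominoes need not be superpartitionable, and $\RR_\MP$ need not be pure. The paper itself furnishes a counterexample in Example~\ref{exam: level but initial is not level}: the stair $S_5$ is a simple thin (indeed path) polyomino with
\[
\FF(\RR_{S_5})=\{\{C_1,C_3,C_5\},\{C_1,C_3,C_6\},\{C_1,C_4,C_6\},\{C_2,C_4,C_6\},\{C_2,C_5\}\},
\]
so $\RR_{S_5}$ has facets of sizes $2$ and $3$. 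Corollary~\ref{cor:LevInJ} and the whole of Section~\ref{sec:level} are devoted precisely to determining \emph{when} $\RR_\MP$ is pure, so purity is very far from automatic. Fortunately this step is also unnecessary: the Krull dimension $d=|V(\MP)|-\rk(\MP)$ is already given by Lemma~\ref{lem:dim-of-poly}, and computing the $h$-polynomial of $\Delta$ does not require $\Delta$ to be pure.

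Your overall strategy (Gr\"obner degeneration to a squarefree initial ideal, then induction on $\rk(\MP)$ via removal of a leaf cell) is the right shape and is essentially what \cite{RR21} does, but the induction needs more care than you indicate. Removing an entire maximal interval $I$ from a thin polyomino can disconnect it or leave something that is not a polyomino; the argument in \cite{RR21} distinguishes between a \emph{tail} and an \emph{endcut} (see the proof of Theorem~\ref{thm:pgrank} and Figure~\ref{fig:oper} for the two operations) rather than simply deleting $I$. Your deletion identity $r_\MP(t)=r_{\MP\setminus\{C\}}(t)+t\cdot r_{\MP'}(t)$ is correct, but to close the induction you must verify that the objects $\MP\setminus\{C\}$ and $\MP'$ to which you recurse are again simple thin polyominoes, and track the vertex counts so that the powers of $(1-t)$ match; this is exactly the bookkeeping you flag, and it is where the tail/endcut dichotomy enters.
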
%
In the same article, the authors introduce a property that is fundamental to characterize Gorenstein simple thin polyominoes
\begin{definition}
Let $\PP$ be a simple thin polyomino.
A cell $C$ of $\PP$ is \emph{single} if there exists a unique maximal inner interval of $\PP$ containing $C$. If any maximal inner interval of $\PP$ has exactly one single cell, we say that $\PP$ has the \emph{S-property}.
\end{definition}
\begin{theorem}
Let $\PP$ be a simple thin polyomino.
Then the following conditions are equivalent:
\begin{itemize}
\item[(1)] $S/I_\PP$ is Gorenstein;
\item[(2)] $\PP$ has the $S$-property.
\end{itemize}
\end{theorem}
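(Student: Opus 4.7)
The plan is to reduce the algebraic equivalence to a combinatorial symmetry of the rook polynomial, and then prove that symmetry by induction on the number of maximal cell intervals.

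First, every simple polyomino $\PP$ yields a Cohen-Macaulay domain $S/I_\PP$, as recalled in the introduction. For a standard graded Cohen-Macaulay $\KK$-algebra, being Gorenstein is equivalent to having a symmetric $h$-vector. By the theorem of Rinaldo--Romeo recalled just above, the $h$-polynomial of $S/I_\PP$ coincides with the rook polynomial $r_\PP(t)$. Consequently, condition (1) is equivalent to the combinatorial condition that $r_\PP(t)$ is palindromic, i.e., $r_k = r_{r(\PP)-k}$ for every $k$.

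Second, I would prove that $r_\PP(t)$ is palindromic if and only if $\PP$ has the S-property by induction on the number $s$ of maximal cell intervals, using the standard deletion--contraction recurrence
\[
r_\PP(t) \;=\; r_{\PP\setminus C}(t) \,+\, t\cdot r_{\PP\setminus R(C)}(t),
\]
where $C$ is a chosen cell of $\PP$ and $R(C)$ denotes the cells of $\PP$ sharing a maximal cell interval with $C$. For the implication (2)$\Rightarrow$(1), I would pick a leaf maximal cell interval $I$ of $\PP$ and take $C$ to be its unique single cell; then $R(C)=I$, and $\PP\setminus I$ is a simple thin polyomino with $s-1$ maximal cell intervals that inherits the S-property. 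Inductive palindromicity of $r_{\PP\setminus I}(t)$, together with a direct analysis of $r_{\PP\setminus C}(t)$ (in which $\PP\setminus C$ differs from $\PP\setminus I$ by restoring the non-single cells of $I$, each lying in a new leaf interval), should propagate palindromicity to $r_\PP(t)$. For (1)$\Rightarrow$(2), the equality $r_{r(\PP)} = r_0 = 1$ forced by symmetry already says that there is a unique maximum non-attacking configuration (so $\PP$ is at least pseudo-Gorenstein); comparing the symmetric pair of coefficients at degrees $1$ and $s-1$ then forces each maximal cell interval to contain \emph{exactly} one single cell, which is the S-property.

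The main obstacle I anticipate is the combinatorial bookkeeping in the recurrence. Removing a single cell $C$ can disconnect $\PP$, modify which remaining cells are single in their maximal intervals, and alter the family of maximal cell intervals. Making the recursion close cleanly, that is, relating $\PP\setminus C$ to $\PP\setminus I$ in a way that matches coefficient-by-coefficient the palindromicity supplied by induction, is the crux. A careful choice of $C$ as a single cell at the extreme end of a leaf interval $I$, together with the observation that in a thin polyomino the S-property is preserved under deletion of a leaf interval, should allow the induction to close.
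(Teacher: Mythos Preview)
This theorem is not proved in the present paper; it is quoted from \cite{RR21} as background, so there is no in-paper proof to compare against. Your reduction to palindromicity of the rook polynomial (via Cohen--Macaulayness of $S/I_\PP$ and the identification $h(t)=r_\PP(t)$) is the correct starting point and matches how \cite{RR21} frames the question.

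The direction $(1)\Rightarrow(2)$ in your sketch has a real gap. From palindromicity you get $r_{r(\PP)}=1$, hence a unique maximum configuration. But then you propose to compare coefficients at degrees $1$ and $s-1$, conflating the rook number $r(\PP)$ with the number $s$ of maximal intervals; a priori one only has $r(\PP)\le s$, and equality fails precisely in non-$S$ examples such as the stair $S_3$. Even after correcting to $r(\PP)-1$, the identity $r_1=r_{r(\PP)-1}$ reads $|\PP|=\#\{(r(\PP)-1)\text{-configurations}\}$, and it is not at all clear how this alone forces each maximal interval to contain exactly one single cell. You need an intermediate step establishing $r(\PP)=s$ and then an interval-by-interval argument; the single coefficient comparison you describe does not deliver the $S$-property.

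For $(2)\Rightarrow(1)$ your own caveat is well placed, and in fact the recursion does not close as written. With the $S$-property a leaf interval $I$ necessarily has $|I|=2$, say $I=\{C,D\}$ with $C$ single and $D$ the intersection cell with the adjacent interval $J$. Deleting $C$ makes $D$ a new single cell of $J$, so $\PP\setminus C$ fails the $S$-property and the inductive hypothesis is unavailable for it; and $\PP\setminus I$ need not even be connected when $D$ sits in the interior of $J$. One must either track $r_{\PP\setminus C}(t)$ directly without appealing to the $S$-property, or abandon the recurrence in favor of a structural bijection between $k$- and $(s-k)$-configurations, which is closer to how \cite{RR21} proceeds.
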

We give the definition of the main object of the paper.

\begin{definition}\label{def:path}
	A simple polyomino $\mathcal{P}$ is called a path if $\MP=\{C_1,\dots,C_\ell\}$ such that
	\begin{enumerate}
		\item $C_i\cap C_{i+1}$ is a common edge for all $i=1,\dots,\ell$;
		\item $C_i\neq C_j$ for all $i\neq j$;
		\item For all $i\in \{3,\dots,\ell-2\}$ and $j\notin \{i-2,i-1,i,i+1,i+2\}$, one has $C_i\cap C_j=\emptyset$.
	\end{enumerate}
	We denote a path polyomino by $\PP=C_1 C_2 \cdots C_\ell$.
	If $I_1,\ldots, I_s$ are the maximal intervals of $\PP$, then we denote by $l_k$ the \emph{length} of $I_k$, say $l_k=|I_k|$ for $k \in \{1,\ldots, s\}$. 
\end{definition}

An example of path polyomino is shown in Figure \ref{fig:path}.
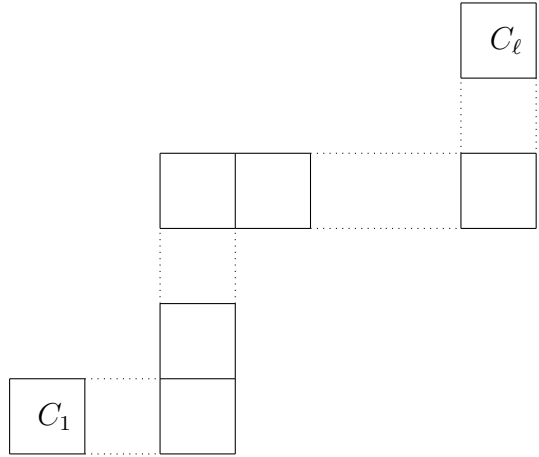
\begin{figure}[H]\label{fig:example}
	\begin{tikzpicture}[scale=1]
		\draw  (-5,-3)-- (-4,-3);
		\draw  (-5,-3)-- (-5,-4);
		\draw  (-5,-4)-- (-4,-4);
		\draw  (-4,-4)-- (-4,-3);
		\draw  (-3,-3)-- (-2,-3);
		\draw  (-2,-3)-- (-2,-4);
		\draw  (-2,-4)-- (-3,-4);
		\draw  (-3,-4)-- (-3,-3);
		\draw  (-3,-2)-- (-3,-3);
		\draw  (-3,-2)-- (-2,-2);
		\draw  (-2,-2)-- (-2,-3);
		\draw  (-3,-1)-- (-3,0);
		\draw  (-3,0)-- (-2,0);
		\draw  (-2,0)-- (-2,-1);
		\draw  (-3,-1)-- (-2,-1);
		\draw  (-2,0)-- (-1,0);
		\draw  (-1,0)-- (-1,-1);
		\draw  (-2,-1)-- (-1,-1);
		\draw  (1,0)-- (2,0);
		\draw  (2,0)-- (2,-1);
		\draw  (2,-1)-- (1,-1);
		\draw  (1,-1)-- (1,0);
		\draw [dotted] (1,0)-- (1,1);
		\draw  (1,1)-- (2,1);
		\draw [dotted] (2,1)-- (2,0);
		\draw[dotted] (-4,-3) --(-3,-3);
		\draw[dotted] (-4,-4) --(-3,-4);
		\draw[dotted] (-3,-2) --(-3,-1);
		\draw[dotted] (-2,-2) --(-2,-1);
		\draw[dotted] (-1,0) --(0,0);
		\draw[dotted] (-1,-1) --(0,-1);
		\draw (1,1) --(1,2);
		\draw (2,1) --(2,2);
		\draw (1,2) --(2,2);
		\draw[dotted] (0,0)-- (1,0);
		\draw[dotted] (0,-1)--(1,-1);
			\draw (-4.4,-3.5) node {$C_1$};
			\draw (1.6,1.5) node {$C_\ell$};
	\end{tikzpicture}
	\caption{A path polyomino $\mathcal{P}$}\label{fig:path}
\end{figure}

We start with the following crucial lemma for any polyomino which will be used in the sequel.

\begin{lemma}\label{lem:F+1}
Let $\MP$ be a polyomino and let $F \in \FF(\RR_\MP)$. If $|F|<r(\MP)$, then there exists $F' \in \FF(\RR_\MP)$ such that $|F'|=|F|+1$.
\end{lemma}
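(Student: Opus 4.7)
The plan is to recast the statement as a classical bipartite-matching fact. Associate with $\MP$ the bipartite graph $B_{\MP}$ whose two vertex classes are, respectively, the horizontal and the vertical cell intervals of $\MP$, and whose edges are the cells of $\MP$: each cell $C$ joins the unique horizontal cell interval containing $C$ to the unique vertical cell interval containing $C$. Two rooks are non-attacking exactly when the corresponding edges of $B_{\MP}$ share no vertex, so the faces of $\RR_{\MP}$ coincide with the matchings of $B_{\MP}$, its facets with the maximal matchings, and $r(\MP)=\nu(B_{\MP})$, the matching number.

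Under this translation the hypothesis becomes: $F$ is a maximal but non-maximum matching of $B_{\MP}$. By Berge's theorem there exists an $F$-augmenting path $P=v_{0}v_{1}\cdots v_{2\ell+1}$, i.e.\ a path whose edges alternate between $E(B_{\MP})\setminus F$ and $F$ and whose endpoints $v_{0},v_{2\ell+1}$ are both $F$-unsaturated. Set $F':=F\,\triangle\,E(P)$; this is a matching with $|F'|=|F|+1$, and all that remains is to check that $F'$ is itself maximal, hence a facet of $\RR_{\MP}$.

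The key observation is that the flip along $P$ changes the saturation status of exactly the two endpoints $v_{0}$ and $v_{2\ell+1}$, which pass from unsaturated to saturated, while every other vertex of $B_{\MP}$ keeps its status; in particular the set of $F'$-unsaturated vertices is contained in the set of $F$-unsaturated vertices. If $F'$ were not maximal there would exist an edge $e=\{a,b\}$ of $B_{\MP}$ with both $a$ and $b$ $F'$-unsaturated, hence both $F$-unsaturated, so $F\cup\{e\}$ would be a matching, contradicting the maximality of $F$. This produces the required $F'\in\FF(\RR_{\MP})$.

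The only subtle point is the opening translation: one must verify that two rooks on $\MP$ attack each other exactly when their cells share a common cell interval, and that each cell of $\MP$ sits in exactly one horizontal and one vertical cell interval, so that $B_{\MP}$ is well-defined and the identification of non-attacking configurations with matchings of $B_{\MP}$ is literal. Once that is in place no further combinatorial information about $\MP$ is needed and the claim reduces to the textbook augmenting-path argument above; in particular the proof does not require $\MP$ to be simple, thin, or to have a pure rook complex.
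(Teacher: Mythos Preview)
Your proof is correct and takes a genuinely different route from the paper's. The paper argues directly with rook configurations: fixing a maximum facet $G$, it strips off the cells of $F$ that lie in $G$ or attack a unique cell of $G$ (together with their matched cells in $G$), and then does a counting argument on the residual sets $\widetilde F,\widetilde G$ to locate a chain of swaps that frees up an extra cell. This is, in effect, an ad hoc rediscovery of an augmenting path carried out inside the rook complex.

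Your translation to the bipartite graph $B_{\MP}$ makes this structure explicit and lets you invoke Berge's theorem off the shelf. The one step that Berge does not give for free---that the augmented matching $F'=F\triangle E(P)$ is again \emph{maximal}, not merely larger---you handle cleanly with the observation that augmenting along $P$ only shrinks the set of unsaturated vertices, so any edge witnessing non-maximality of $F'$ would already contradict the maximality of $F$. This is the crux, and it is correct. Your approach is shorter, conceptually transparent, and makes clear that nothing about simplicity or thinness is needed; the paper's hands-on argument has the virtue of being self-contained but is otherwise doing the same work in disguise.
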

\begin{proof}
Let $|F|=t<r(\MP)$.
Let $G \in \FF(\RR_\MP)$ be such that $|G|=r(\MP)$. Let $A \subset F$ and $B \subset G$ such that
\begin{enumerate}
\item  $A=(F\cap G) \cup \{C \in F: \mbox{ $C$ attacks a unique cell of $G$}\};$	\item $B=(F\cap G) \cup \{D \in G: \mbox{$D$ is attacked by a cell of $A$}\}.$

\end{enumerate} 
We have $|A|=|B|$, therefore let $\widetilde{F}=F\setminus A=\{C \in F :\mbox{ $C$ attacks a two cells of $\widetilde{G}$} \}$, and let $\widetilde{G}=G \setminus B$. We set $\tilde{t}=|\widetilde{F}|$ and $\tilde{d}=|\widetilde{G}|$. It holds $\tilde{d}-\tilde{t}=d-t\geq 1$. We now partition $\widetilde{G}=\widetilde{G}_1 \sqcup \widetilde{G}_2$ in a way such that 
\[
\widetilde{G}_1= \{D \in \widetilde{G}: \mbox{$D$ is attacked by a unique cell of $\widetilde{F}$}\}
\] 
\[
\widetilde{G}_2= \{D \in \widetilde{G}: \mbox{$D$ is attacked by two cells of $\widetilde{F}$}\},
\] 
and let $g_i=|\widetilde{G}_i|$ for $i=1,2$. We observe that since any cell of $\widetilde{F}$ attacks two cells of $\widetilde{G}$, then $2\tilde{t}=\tilde{d}+g_2$, because we count twice the cells of $\widetilde{G}_2$. Hence 
\[
g_1=\tilde{d}-g_{2}=\tilde{d}-(2\tilde{t}-\tilde{d})=2(\tilde{d}-\tilde{t})\geq 2,
\]
that is there are at least two cells of $\widetilde{G}$ attacked by a cell of $\widetilde{F}$. Let $D_1 \in \widetilde{G}_1$ and let $C_1 \in \widetilde{F}$ be the cell attacking it. Let $D_2$ be the other cell of $\widetilde{G}$ attacked by $C_1$. If $D_2 \in \widetilde{G}_1$, then $F'=(F \setminus \{C_1\}) \cup \{D_1,D_2\}$ is the desired facet. Otherwise since $D_2 \in \widetilde{G}_2$, then set $F_1=(F \setminus \{C_1\}) \cup \{D_1\}$, let $C_2$ be the other cell of $\widetilde{F}$ attacking $D_2$ and let $D_3$ be the other cell of $\widetilde{G}$ attacked by $C_2$. Again if $D_3\in \widetilde{G}_1$, then $F'=(F_1 \setminus \{C_2\}) \cup \{D_2,D_3\}$, otherwise set  $F_2=(F_1 \setminus \{C_2\}) \cup \{D_2\}$. By proceeding in this way, since $g_1\geq 2$, we will find a cell $D_{k}$ that is attacked by a cell $C_k$ of the configuration $F_k$ and $C_k$ attacks $D_{k+1}\in  \widetilde{G}_1$, that is we set $F'=(F_k \setminus \{C_k\}) \cup \{D_k,D_{k+1}\}$ and the assertion follows.
\end{proof}

\section{Pseudo-Gorenstein path polyominoes}\label{sec:pseudo}
From now onward, we assume that the polyominoes are simple and thin. We start this section by the following notion that is fundamental for the characterization of pseudo-Gorenstein, and level, path polyominoes.

\begin{definition}\label{def:stair}
Let $\MS$ be a path polyomino as in Definition \ref{def:path} and let $\MC=\{I_1,\dots,I_\lambda\}$ be the set of its maximal cell intervals. Then $\MS$ is called a \textit{stair}, if $\lambda\geq 3$ and $l_i=2$ for all $1< i<\lambda$.  The length of the stair is $\lambda$ and $\MS$ has odd (resp. even) length  if $\lambda$ is odd (resp. even). We denote by $S_{\lambda}$ the stair with $l_1=2=l_{\lambda}$ and by $\tilde{S}_{\lambda}$ a stair  with $l_1>2$ or $l_{\lambda}>2$.
	
\end{definition}

\begin{figure}[H]
	\begin{tikzpicture}[scale=1]
		\draw  (3,0)-- (3,-1);
		\draw  (4,-1)-- (4,0);
		\draw  (4,0)-- (3,0);
		\draw  (3,-1)-- (4,-1);
		\draw  (4,-1)-- (5,-1);
		\draw  (5,0)-- (5,-1);
		\draw  (4,0)-- (5,0);
		\draw  (4,0)-- (4,1);
		\draw  (4,1)-- (5,1);
		\draw  (5,1)-- (5,0);
		\draw  (5,0)-- (6,0);
		\draw  (5,1)-- (6,1);
		\draw  (6,1)-- (6,0);
		\draw  (5,1)-- (5,2);
		\draw  (5,2)-- (6,2);
		\draw  (6,2)-- (6,1);
		\begin{scriptsize}
			\fill  (3,-1) circle (1.5pt);
			\fill  (4,-1) circle (1.5pt);
			\fill  (5,-1) circle (1.5pt);
			\fill  (5,0) circle (1.5pt);
			\fill  (4,0) circle (1.5pt);
			\fill  (6,0) circle (1.5pt);
			\fill  (3,0) circle (1.5pt);
			\fill  (4,1) circle (1.5pt);
			\fill  (5,1) circle (1.5pt);
			\fill  (6,1) circle (1.5pt);
			\fill  (5,2) circle (1.5pt);
			\fill  (6,2) circle (1.5pt);
		\end{scriptsize}
	\end{tikzpicture}
	\caption{The stair $S_4$}\label{fig:stair}
\end{figure}
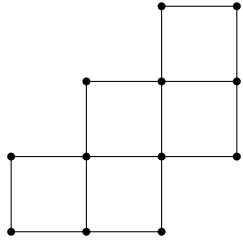

The following is a consequence of \cite{RR21}.
\begin{lemma}\label{lem:pseudo-gorenstein}
	Let $\MP$ be a polyomino. Then $S/I_\MP$ is pseudo-Gorenstein if and only if there exists a unique configuration of non-attacking rooks of maximum cardinality.
\end{lemma}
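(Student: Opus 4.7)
The plan is to combine the characterization of pseudo-Gorenstein rings in terms of the $h$-polynomial with the explicit description of the Hilbert series numerator for simple thin polyominoes recorded as Theorem 1.1 of \cite{RR21}. Since the standing assumption at the start of this section is that $\MP$ is simple and thin, $S/I_\MP$ is Cohen--Macaulay, and its reduced Hilbert--Poincar\'e series admits the form
\[
\mathrm{HP}_{S/I_\MP}(t) = \frac{h(t)}{(1-t)^d}
\]
with $h(t)\in\ZZ[t]$ and $h(1)\neq 0$. By the discussion of \cite{EHHM} recalled in the introduction, a Cohen--Macaulay ring is pseudo-Gorenstein exactly when the leading coefficient of this numerator polynomial $h(t)$ equals $1$.

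Next I would invoke Theorem 1.1 of \cite{RR21}, quoted in Section \ref{sec:prel}, which identifies $h(t)$ with the rook polynomial
\[
r_\MP(t) = \sum_{k=0}^{r(\MP)} r_k\, t^k,
\]
where $r_k$ counts the configurations of $k$ pairwise non-attacking rooks on $\MP$. The leading term of $r_\MP(t)$ is $r_{r(\MP)}\, t^{r(\MP)}$, and by definition $r_{r(\MP)}$ is precisely the number of non-attacking rook configurations of maximum cardinality on $\MP$.

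Putting the two facts together: $S/I_\MP$ is pseudo-Gorenstein if and only if the leading coefficient of $h(t) = r_\MP(t)$ is $1$, if and only if $r_{r(\MP)} = 1$, if and only if the maximum non-attacking rook configuration on $\MP$ is unique. This gives the claimed equivalence.

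Essentially no obstacle arises since both ingredients are already cited in the paper; the only point to be slightly careful about is ensuring that the "leading coefficient equals $1$" criterion is being applied to the correctly reduced $h$-polynomial (i.e.\ with $h(1)\neq 0$), which is automatic from the Cohen--Macaulayness of $S/I_\MP$ for simple polyominoes.
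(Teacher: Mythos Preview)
Your argument is correct and matches the paper's approach: the paper states the lemma as a direct consequence of \cite{RR21}, which is precisely the combination of the pseudo-Gorenstein criterion (leading coefficient of the $h$-polynomial equals $1$) with the identification $h(t)=r_\MP(t)$ from Theorem~1.1 of \cite{RR21} that you spell out.
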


\begin{remark}\label{rem:pg}
 Let $\PP$ be a simple thin polyomino such that $S/I_\MP$ is pseudo-Gorenstein. Then any interval has at most one single cell. Moreover, 
	let $F\in\FF(\RR_\MP)$ be the unique facet of maximum cardinality of the pseudo-Gorenstein ring  $S/I_\MP$. Then $F$ contains all single cells of $\MP$. 
\end{remark}
\begin{proof} Suppose $D_i$ is a single cell contained in the interval $I_i$ and let $C_i\in F$ with $C_i\in I_i$. Then 
	\[
	(F\setminus \{C_i\})\cup \{D_i\}
	\]
	is a facet. That is $C_i=D_i$.
\end{proof}

\begin{theorem}\label{theo:pseudo-gorenstein}
	Let $\MP$ be a path polyomino with $\MC=\{I_1,I_2,\ldots ,I_s\}$. Then $S/I_\MP$ is pseudo-Gorenstein  if and only if either $\MP$ is a cell or the following conditions hold:
	\begin{enumerate}
		\item $l_1=l_s=2$ and $l_k\leq 3$ for all $2\leq k\leq s-1$;
		\item $\MP$ does not have odd stairs.
	\end{enumerate}
\end{theorem}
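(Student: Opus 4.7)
The plan is to characterize pseudo-Gorensteinness via Lemma~\ref{lem:pseudo-gorenstein}: $S/I_\MP$ is pseudo-Gorenstein iff the rook complex $\RR_\MP$ has a unique facet of maximum cardinality. The single-cell case is immediate, so I may assume $\rk(\MP)\geq 2$.

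For the necessity of (1), I apply Remark~\ref{rem:pg}: each maximal interval has at most one single cell. Since $I_1$ contributes $\ell_1-1$ single cells (all its cells except the one shared with $I_2$), this forces $\ell_1=2$; the same argument on $I_s$ gives $\ell_s=2$; and each middle interval $I_k$ contributes $\ell_k-2$ single cells, forcing $\ell_k\leq 3$. For the necessity of (2), I argue by contrapositive. Suppose $\MP$ contains an odd stair $\MS$ of length $l$. Label the junction cells of $\MS$ by $v_0,v_1,\ldots,v_l$, and let $D_-,D_+$ be the two single cells of the intervals flanking $\MS$ (either the singletons of the length-$3$ middle boundary intervals, or the singles of $I_1$ and $I_s$). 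By Remark~\ref{rem:pg}, $D_-$ and $D_+$ lie in the unique maximum facet, and they attack respectively $v_0$ and $v_l$. Hence every maximum facet restricts on the stair to an independent set in the ``attacking path'' $v_1{-}v_2{-}\cdots{-}v_{l-1}$, in which consecutive $v_i$'s attack (they share a common interval) and non-consecutive ones do not. Since $l$ is odd, this is a path on $l-1$ (even) vertices, and such a path admits at least two distinct maximum independent sets (e.g.\ the two alternating ones). Each extends to a distinct maximum facet of $\RR_\MP$, contradicting the uniqueness granted by Lemma~\ref{lem:pseudo-gorenstein}.

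For sufficiency, assuming (1) and (2) I construct a candidate facet $F_0$ consisting of every single cell of $\MP$ together with, on each (now necessarily even-length) stair, the alternating subset $\{v_1,v_3,\ldots,v_{l-1}\}$; this alternating subset is the unique maximum independent set of the odd-length path $v_1{-}\cdots{-}v_{l-1}$. A direct count, combined with Lemma~\ref{lem:F+1} to rule out augmentations, verifies $|F_0|=r(\MP)$, and $F_0$ is non-attacking because distinct maximal intervals of a path polyomino occupy distinct rows or columns. For uniqueness, I would show that no swap is possible: for any single $D\in I$ and any other cell $R\in I$, the cell $R$ attacks some cell of $F_0\setminus\{D\}$ (namely an adjacent stair-alternating cell or an adjacent single), so replacing $D$ by $R$ would violate non-attackingness. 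Consequently every maximum facet must contain every single cell, and then the unique maximum independent set on each stair's odd-length path $v_1{-}\cdots{-}v_{l-1}$ forces the remaining cells of the facet to coincide with $F_0$.

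The main obstacle I anticipate is the sufficiency step, specifically the swap-blocking argument: I must show, using (2) (so that each stair has even length and hence nontrivial alternating cells in $F_0$), that every cell neighboring a single inside its interval is attacked by some $F_0$-cell outside that interval. The key observation is that when a stair has even length its alternating cells reach right up to the flanking singles, blocking all swaps; this would fail precisely in the odd-stair scenario excluded by~(2). Spelling this ``reach'' statement out, case by case according to whether the single lies in a middle length-$3$ interval or in $I_1$ or $I_s$, is where the bulk of the technical work will lie.
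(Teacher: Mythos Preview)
Your necessity argument matches the paper's: Remark~\ref{rem:pg} forces~(1), and for~(2) both you and the paper exhibit two distinct maximum facets on an odd stair (you via maximum independent sets of an even path, the paper by writing the two configurations explicitly).

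The gap is in the sufficiency direction. From the swap-blocking property of $F_0$ (``every non-single $R$ in an interval $I$ attacks some cell of $F_0$ other than the single of $I$'') you conclude that \emph{every} maximum facet contains every single cell. That deduction is unjustified: swap-blocking only rules out maximum facets adjacent to $F_0$ by a single exchange, not those differing in several cells simultaneously. In a $6$-cycle both alternating triples are swap-blocked maximum independent sets, so the implication fails for general graphs; if it happens to hold for attack graphs of path polyominoes you would still need to prove it. The paper sidesteps this by arguing inductively along the path that under (1) and (2) the polyomino has $2m+1$ cells and any maximum facet is forced to equal $\{C_1,C_3,\ldots,C_{2m+1}\}$: one treats the first even stair, showing its contribution to a maximum facet must be $\{C_1,C_3,\ldots,C_{l+1}\}$, and then recurses on the remaining polyomino. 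A smaller but related issue: your description of $F_0$ as ``all singles together with $\{v_1,v_3,\ldots,v_{l-1}\}$ on each stair'' breaks when a stair reaches $I_1$ or $I_s$. For $\MP=S_4$ it yields $\{v_0,v_1,v_3,v_4\}$, which contains attacking pairs, whereas the actual unique maximum facet is $\{v_0,v_2,v_4\}$. The paper's uniform description $\{C_1,C_3,\ldots,C_{2m+1}\}$ avoids any such case distinction.
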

\begin{proof}
	The case $\MP$ is a cell is obvious being a principal ideal. Hence from now on we assume $\MP$ is not a cell.
	
	$\Rightarrow )$ Since $\MP$ is not a cell, then $s>1$. Moreover, by Remark \ref{rem:pg}, every interval has at most one single cell, and  since $\MP$ is a path,  then $l_1=l_s=2$ and $l_k\leq 3$. That is (1) holds. Now, assume $F \in \FF(\RR_\MP)$ is the unique facet of maximal cardinality.
	Let $I_{k+1}, I_{k+2},\ldots, I_{k+\ell}$  be intervals of an odd stair, with first interval $I_{k+1}$, last interval $I_{k+\ell}$ and $\ell$ odd. Then we may assume $\{C_k,C_{k+1}\}\subseteq I_{k+1}$, $\{C_{k+2},C_{k+3}\}=I_{k+3}$, $\{C_{k+4},C_{k+5}\}= I_{k+5}$,$\ldots$, $\{C_{k+\ell-1},C_{k+\ell}\}\subseteq I_{k+\ell}$, and $\MP'=C_k C_{k+1}\cdots C_{k+\ell}$ is a subpath of $\MP$. Since $F$ is unique, then by Remark \ref{rem:pg} we have that the single cell $C_k$ of $I_{k+1}$, and $C_{k+\ell}$ of $I_{k+\ell}$ are in $F$. Now, 
	\[
	G=(F\setminus \MP')\cup \{C_k,C_{k+2},\ldots, C_{k+\ell-1}\}
	\]
	has the same cardinality of $F$ and it is a facet, too. Hence $F$ is not unique. That is (2) holds, too.
	
	$\Leftarrow)$ By the Lemma \ref{lem:pseudo-gorenstein}, we need to prove that there exists a unique configuration of non-attacking rooks of maximum cardinality. We claim that if (1) and (2) are satisfied then $\MP=C_1\cdots C_{2m+1}$ and the unique maximal facet is $F=\{C_1,C_3,\ldots,C_{2m+1}\}$. Suppose there are not even stairs, namely $\MP$ does not contain any stair.  Then $\MP$ is Gorenstein and $F$ has only single cells: $C_1$ of the interval $I_1=\{C_1,C_2\}$, $C_3$ of $I_2=\{C_2,C_3,C_4\}$, and so on, ending with $C_{2m+1}$ in $I_s=\{C_{2m},C_{2m+1}\}$. Now, suppose that $\MP$ has at least an even stair and without loss of generality that the first even stair is in the first $\ell$ intervals that is $I_1=\{C_1,C_2\}$, $I_2=\{C_2,C_3\}$, and so on until the last one that is $I_\ell=\{C_\ell,C_{\ell+1},C_{\ell+2}\}$ with $\ell$ even. Since $F$ is maximal, then $C_{\ell+1}$ must belong to $F$.  Moreover, $F$ having maximum cardinality contains $\{C_1,C_3,\ldots, C_{\ell-1}, C_{\ell+1}\}$. Using induction on the number of even stairs the assertion easily follows.

\end{proof}

By the proof of Theorem \ref{theo:pseudo-gorenstein} we obtain the following
\begin{corollary}\label{cor:pseudo-gorenstein}
	Let $\MP$ be a path polyomino such that  $S/I_\MP$ is pseudo-Gorenstein with $\MP=C_1C_2\cdots C_{\ell}$. Then the unique facet of maximum cardinality is $$F=\{C_1,C_3,\ldots, C_{\ell}\}$$ and 
	\begin{enumerate}
		\item $\ell=2r-1$;
		\item $r(\MP)=r$.
	\end{enumerate}
\end{corollary}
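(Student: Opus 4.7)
The proof is essentially a book-keeping consequence of the $\Leftarrow$ direction of Theorem~\ref{theo:pseudo-gorenstein}, where the unique facet $F$ of maximum cardinality was constructed explicitly along the path. The plan is to read off the two numerical claims from that construction.

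First, I would observe that both endpoint cells $C_1$ and $C_\ell$ are forced into $F$: by condition~(1) of Theorem~\ref{theo:pseudo-gorenstein} we have $\ell_1=\ell_s=2$, so $I_1=\{C_1,C_2\}$ has $C_1$ as its single cell and $I_s=\{C_{\ell-1},C_\ell\}$ has $C_\ell$ as its single cell, and both are forced into the unique maximal facet by Remark~\ref{rem:pg}. Next, I would prove by induction on $\rk(\MP)$ that $F=\{C_1,C_3,\ldots,C_\ell\}$, using the same case analysis as in Theorem~\ref{theo:pseudo-gorenstein}: given that a rook has been placed on $C_{2k+1}$, either the path next traverses a length-$3$ interval whose single cell is $C_{2k+3}$ (forced into $F$ by Remark~\ref{rem:pg}), or the path continues through an even stair, in which case the construction in Theorem~\ref{theo:pseudo-gorenstein} pins the next rook at $C_{2k+3}$ as the unique compatible choice. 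This propagates the odd-index pattern all the way to $C_\ell$.

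The two assertions now follow immediately. Since $C_\ell\in F$ and $F$ consists of cells with odd path-indices, $\ell$ must be odd; writing $\ell=2r-1$ with $r=|F|$ gives~(1). Claim~(2) then drops out from Lemma~\ref{lem:pseudo-gorenstein}, which identifies $|F|$ with the rook number $r(\MP)$. I do not expect a genuine obstacle, since the heart of the argument is already contained in the proof of Theorem~\ref{theo:pseudo-gorenstein}; the only point worth double-checking is the compatibility of rook choices across the entry and exit intervals of a stair, which is precisely what the inductive step must handle.
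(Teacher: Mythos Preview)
Your proposal is correct and follows exactly the paper's approach: the paper derives the corollary as an immediate consequence of the $\Leftarrow$ direction of Theorem~\ref{theo:pseudo-gorenstein}, where it is shown that $\MP=C_1\cdots C_{2m+1}$ and the unique maximal facet is $F=\{C_1,C_3,\ldots,C_{2m+1}\}$. Your write-up simply makes explicit the bookkeeping that the paper leaves implicit.
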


\section{Gr\"obner basis of path polyominoes}\label{sec:grobner}

We start this section by defining a labelling that induces in a natural way a quadratic Gr\"obner basis to the polyomino ideal.

\begin{notation}\label{not:path-poly}
	Let $\mathcal{P}$ be a path polyomino with cells $C_1,C_2,\ldots , C_{\ell}$. For any $i\in \{1,\ldots , \ell \}$, we call $\MP_i$ the subpath on the cells $C_1,C_2, \ldots , C_i$. Then, we relabel the vertices of $V(\MP)$ as $\{a,a_1,b_1,\ldots, a_\ell, b_\ell, b \}$ such that  
\begin{itemize}
\item $a,a_1$  are leaf corners of $C_1$, and $b_1$ is the opposite corner of $a_1$ in $C_1$;
\item for any $i\in \{2,\ldots, \ell\}$, $a_i$ is leaf corner of $C_{i-1}$ in $\MP_{i-1}$, and $b_i$ is the leaf corner of $C_i$ in $\MP_i$ opposite to $a_i$.
\item $b$ is the leaf corner of $C_\ell$ different from $b_\ell$.
\end{itemize} 
From now on, we consider the graded reverse lexicographic order $<$ on $S$ such that $x_{b_1}>x_{a_1}>\cdots >x_{b_\ell}>x_{a_\ell}>x_{b}>x_{a}$.
\end{notation} 

\begin{figure}[H]
\centering
\begin{tikzpicture}
\draw (0,0)--(2,0);
\draw (3,0)--(5,0);
\draw (0,1)--(5,1);
\draw (1,2)--(4,2);

\draw (0,0)--(0,1);
\draw (1,0)--(1,2);
\draw (2,0)--(2,2);
\draw (3,0)--(3,2);
\draw (4,0)--(4,2);
\draw (5,0)--(5,1);

\filldraw (0,0)  circle (\rad) node [anchor=north east]{$a$};
\filldraw (0,1)  circle (\rad) node [anchor=south east]{$a_1$};
\filldraw (1,0)  circle (\rad) node [anchor=north]{$b_1$};
\filldraw (2,0)  circle (\rad) node [anchor=north]{$b_2$};
\filldraw (1,1)  circle (\rad) node [anchor=south east]{$a_2$};
\filldraw (1,2)  circle (\rad) node [anchor=south east]{$b_3$};
\filldraw (2,1)  circle (\rad) node [anchor=north west]{$a_3$};
\filldraw (2,2)  circle (\rad) node [anchor=south]{$a_4$};
\filldraw (3,2)  circle (\rad) node [anchor=south]{$a_5$};
\filldraw (4,2)  circle (\rad) node [anchor=south]{$a_6$};
\filldraw (3,1)  circle (\rad) node [anchor=north west]{$b_4$};
\filldraw (4,1)  circle (\rad) node [anchor=south west]{$b_5$};
\filldraw (3,0)  circle (\rad) node [anchor=north]{$b_6$};
\filldraw (4,0)  circle (\rad) node [anchor=north]{$a_7$};
\filldraw (5,0)  circle (\rad) node [anchor=north west]{$b$};
\filldraw (5,1)  circle (\rad) node [anchor=south west]{$b_7$};
\end{tikzpicture}
\caption{Path labelled as in Notation \ref{not:path-poly} }\label{fig:general-path}
\end{figure}
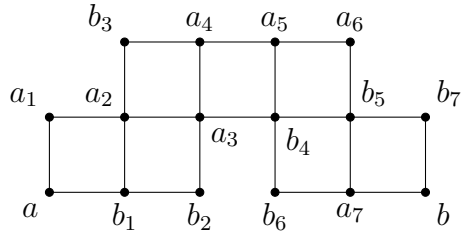

\begin{remark}\label{rem:Ci+1}
From Notation \ref{not:path-poly}, if $C_i$ has leaf corners $a_{i+1}$ and $b_{i}$, and other corners $a_i$ and $b_{i-1}$ in the polyomino $\PP_i$, then the leaf corners of $C_{i+1}$ are $a_{i+2}$ and $b_{i+1}$ with the other corners either $\{a_{i},a_{i+1}\}$ or $\{a_{i+1},b_i\}$ or $\{b_{i-1},b_i\}$ (see Figure \ref{fig:Ci}).
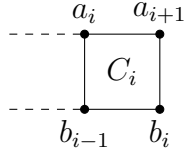
\begin{figure}[H]
\begin{tikzpicture}
\draw[dashed] (0,0)--(1,0);
\draw[dashed] (0,1)--(1,1);
\draw (1,0)--(2,0);
\draw (1,1)--(2,1);

\draw (1,0)--(1,1);
\draw (2,0)--(2,1);

\node at (1.5,0.5){$C_i$};
\filldraw (1,0)  circle (\rad) node [anchor=north]{$b_{i-1}$};
\filldraw (2,0)  circle (\rad) node [anchor=north]{$b_i$};
\filldraw (1,1)  circle (\rad) node [anchor=south]{$a_{i}$};
\filldraw (2,1)  circle (\rad) node [anchor=south]{$a_{i+1}$};

\end{tikzpicture}
\caption{The labelling of the cell $C_i$}\label{fig:Ci}
\end{figure}
\end{remark}

\begin{definition}
We say that a path polyomino  $\MP=C_1 C_2 \cdots C_\ell$ has a \emph{change of direction} at $C_{i}$ with $1<i<\ell$, if $C_{i-1}\cap C_{i+1}\neq \varnothing$.
\end{definition}
\begin{lemma}
Let $\MP$ be a path polyomino as in Notation \ref{not:path-poly}. Then $\MM$, the set of inner 2-minors of $\MP$ forms a reduced Gr\"obner basis of $I_\MP$.
\end{lemma}
\begin{proof}
Let $f,g \in \MM$ be such that $\gcd(\ini(f),\ini(g))\neq 1$. Let $f=f^+ -f^-$ and $g=g^+ - g^-$ with $f^+=\ini(f)$ and $g^+=\ini(g)$. We divide two cases 
\begin{itemize}
\item[1)] $\gcd(f^-,g^-)\neq 1$;
\item[2)] $\gcd(f^-,g^-)= 1$;
\end{itemize}
In case 1), $f$ and $g$ are inner 2-minors contained in the same maximal interval, in particular the two intervals agree on an edge, and by definition. their $S$-polynomial reduces to 0.\\
In case 2), let $I_1$ and $I_2$ be the inner intervals associated to $f$ and $g$ respectively, clearly $u \in I_1\cap I_2$  with $\gcd(f^+,g^+)= x_u$. We claim $|I_1 \cap I_2|>1$. In fact, if $I_1 \cap I_2=\{u\}$, then $u$ is corner of a cell $C_i$ such that there is a change of direction at $C_i$, and $C_{i-1}$ is a cell of $I_1$ and $C_{i+1}$ is a cell of $I_{2}$. We say that $C_{i-1}$ has diagonal corners $u,w$ and antidiagonal corners $v,t$, while $C_{i+1}$ has diagonal corners $u,p$ and antidiagonal corners $z,q$ with $v,u,z$ lying on the same edge intervals, and $u,t,q \in C_{i}$. Let $c$ be the fourth corner of $C_{i}$. We claim that $p=a_{i}+2$. We divide into two cases:
\begin{itemize}
\item[i)] $a_i=t$;
\item[ii)] $a_i=v$.
\end{itemize}
In case i), we have that $z=b_i$ because it is the corner of $C_i$ opposite to $t$, hence $c=a_{i+1}$, $q=b_{i+1}$ and $p=a_{i+2}$. \\
In case ii), we have that $c=b_i$ because it is the corner of $C_i$ opposite to $v$, hence $z=a_{i+1}$, $q=b_{i+1}$ and $p=a_{i+2}$. \\
In both cases, we have proved  $p=a_{i+2}$, hence the claim follows, moreover the latter implies that $u$ is opposite to $a_{i+2}$ in $I_2$ that is the least variable in $I_2$ and $x_u \not | g^+$ (See Figure \ref{fig:twoturns}).


\begin{figure}[H]
\begin{tikzpicture}
\draw (0,0)--(0,2);
\draw (1,0)--(1,2);
\draw (2,1)--(2,2);

\draw (0,0)--(1,0);
\draw (0,1)--(2,1);
\draw (0,2)--(2,2);

\filldraw (1,0) circle (\rad) node [anchor=north west]{$v$};
\filldraw (1,1) circle (\rad) node [anchor=north west]{$u$};
\filldraw (1,2) circle (\rad) node [anchor=south west]{$z$};

\draw (0,0)--(1,1);
\draw (1,0)--(0,2);
\draw (1,2)--(2,1);
\end{tikzpicture}
\caption{The diagonals represent the leading monomials}\label{fig:twoturns}
\end{figure}
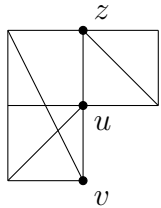

The claim tells us that $|I_1 \cap I_2|>1$, that is $I_1$ and $I_2$ have an edge in common, say $\{u,v\}$. Moreover let $f=x_u x_t - x_v x_w$ and $g=x_u x_z-x_p x_q$ with $p$ lying on the same edge of $u$. Without loss of generality, we assume that $u,v$ are corners of the interval $I_1$, and that $\{u,v\}$ is edge of the cell  $C_i=\{u,v,p,c\}$  in the interval $I_2$, and there is a change of direction at $C_i$.
We have, $I_2\setminus \{ C_i \}$ has associated inner $2$-minor $g'=x_vx_z-x_cx_q$ and $I_1 \cup \{C_i \}$  has associated inner $2$-minor $f'=x_tx_p-x_wx_c$. By taking the $S$-polynomial, we get 
\[
S(f,g)= x_z x_v x_w - x_t x_p x_q.
\]

Since $w,t,q,z$ are extremal variables and $x_t| \ini(f) $ and $x_z |\ini(g)$, then we should identify the least variable of between $x_q$ and $x_w$.

We divide into two cases that depend on the orientation of the polyomino:
\begin{enumerate}
\item $\{u,v\}$ is edge of $C_{i-1}$;
\item $\{u,v\}$ is edge of $C_{i+1}$.
\end{enumerate}
In case (1),  $z,q$ is edge of some $C_k$ with $k>i$, and by construction we have $q=a_{k+1}$, $\ini(S(f,g))=x_z x_v x_w $, and $S(f,g)$ can be reduced modulo $g'$ to obtain
\[
x_z x_v x_w - x_t x_p x_q- x_w g'=x_wx_cx_q- x_t x_p x_q=x_qf' \to 0.
\]
In case (2), $w,t$ is edge of some $C_k$ with $k>i$, and by construction we have $w=a_{k+1}$, $\ini(S(f,g))=x_t x_p x_q $, and $S(f,g)$ can be reduced modulo $f'$ to obtain
\[
x_t x_p x_q-x_z x_v x_w - x_q f'=x_wx_cx_q- x_z x_v x_w=x_wg' \to 0.
\]

\end{proof}

\begin{lemma}\label{lem:inI}
Let $\MP$ be a path polyomino as in Notation \ref{not:path-poly}. Then
\begin{enumerate}
\item $x_{a_i}x_{a_j}\notin \ini(I_\MP)$ for any $i,j$;
\item $x_{a_i}x_{b_j} \in \ini(I_\MP)  \Rightarrow i\leq j$;
\item $x_{b_i}x_{b_j}\in \ini(I_\MP)$ with $i< j$ $\Rightarrow$ there is a change of direction at $C_i$; 
\item Let $f=f^+-f^-$ be a generator of $I_\MP$. If $f^+=x_ux_{b_i}$ with either $x_u>x_{b_{i}}$ or $u=a_i$, then $x_{a_{i+1}}|f^-$.
\end{enumerate}
\end{lemma}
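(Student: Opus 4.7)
The plan is a case analysis on the inner $2$-minors which, by the previous lemma, form the reduced Gr\"obner basis of $I_\MP$. Since $\MP$ is thin, each inner interval is either a single cell $C_k$ or a strip $C_j,\ldots,C_{j+m-1}$ of $m\geq 2$ consecutive cells contained in a common maximal cell interval; for the corresponding $2$-minor $x_ux_v-x_cx_d$ the leading monomial in the given grevlex order is the product of the diagonal pair that avoids the smallest variable among $\{u,v,c,d\}$.

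First I would read off, from Notation \ref{not:path-poly} together with Remark \ref{rem:Ci+1} and its analogues at $C_1$, at $C_\ell$, and at cells following a turn, the four corners and the two diagonals of every inner interval. For a single cell $C_k$ with $2\leq k\leq \ell-1$ the smallest of the four corner variables is $a_{k+1}$ (respectively $a$ for $k=1$ and $b$ for $k=\ell$); inspecting the three subcases of Remark \ref{rem:Ci+1} one finds that the leading monomial is $x_{a_k}x_{b_k}$ in cases (a) and (b) and $x_{b_{k-1}}x_{b_k}$ in case (c), with the analogous patterns at $C_1$ and $C_\ell$. For a strip $C_j,\ldots,C_{j+m-1}$, the two ``far'' corners on the $C_{j+m-1}$ side always contain at least one $b$-type variable (namely $b_{j+m-1}$, or both of $\{b,b_\ell\}$ when $j+m-1=\ell$, or the $b$-entry of the edge shared with $C_{j+m}$ when the strip is a proper subinterval of its maximal row), while the two corners on the $C_j$ side are determined by the case describing $C_j$. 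Writing down the diagonal pairs and selecting the one avoiding the overall smallest variable gives the leading monomial in every subcase; it is always of the form $x_{a_i}x_{b_j}$ or $x_{b_i}x_{b_j}$ with $i\leq j$.

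From this explicit list the four statements become one-line verifications. For (1), the far side of $C_{j+m-1}$ always contributes at least one $b$-type corner, so the four corners are never all of $a$-type; and whenever two $a$'s sit on a diagonal, the largest-indexed one among the four corners is the overall smallest variable and lies on that same diagonal, which is therefore the trailing one and leaves the leading with at least one $b$. For (2), every $x_{a_i}x_{b_j}$ on the list visibly satisfies $i\leq j$. For (3), a leading monomial $x_{b_i}x_{b_j}$ with $i<j$ comes only from a cell in case (c) or from a strip whose starting cell $C_j$ shares the edge $\{b_{j-2},b_{j-1}\}$ with its predecessor; in both situations this shared edge forces the path to turn at $C_i$. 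For (4), the trailing monomial $f^-$ in every such row of the list pairs $a_{i+1}$ with another corner, so $x_{a_{i+1}}\mid f^-$. The main obstacle is the bookkeeping required to make sure that every configuration of starting cell, row direction, and orientation at an interior turn is captured; once the table of leading and trailing monomials is complete, each of (1)--(4) reduces to a direct inspection.
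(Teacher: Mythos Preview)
Your approach is correct in principle but takes a different route from the paper. You propose an exhaustive case analysis: enumerate every inner interval (single cells and strips, broken into subcases by the configuration at each end), write down all four corners and both diagonals, identify the leading monomial in each row, and then verify (1)--(4) by scanning the resulting table. The paper instead argues each item structurally. For (1), it fixes opposite corners $a_i,a_j$ with $j<i$ and shows directly that the far cell of the interval is $C_{i-1}$, so the remaining two corners lie in $\{a,a_1,b_1,\ldots,b_{i-1},a_i\}$ and hence $x_{a_i}$ is the overall smallest variable; thus $x_{a_i}x_{a_j}$ is the trailing term. Item (2) is dispatched the same way. For (3), the paper starts from the hypothesis $x_{b_i}x_{b_j}\in\ini(I_\MP)$ and walks back along the strip from $C_j$ to $C_{i+1}$, using that $a_{k}$ lies on the same edge line as $b_i$ at each step, until it reaches $C_{i+1}$ and reads off the turn at $C_i$ from Remark~\ref{rem:Ci+1}. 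For (4), it uses a single observation: $b_i$ and $a_{i+1}$ are the two leaf corners of $C_i$ in $\MP_i$, so any inner interval with $b_i$ as a corner has $a_{i+1}$ on the adjacent edge, forcing $x_{a_{i+1}}\mid f^-$.

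The trade-off is clear. Your table-building method is self-contained and mechanical, and once completed it also yields an explicit description of $\ini(I_\MP)$ beyond what the lemma asserts; but the bookkeeping is substantial (three subcases at a generic cell, plus boundary behavior at $C_1$, $C_\ell$, and at each end of a strip), and your write-up currently only sketches it, asserting rather than proving the key fact that the largest-indexed corner is always the smallest variable. The paper's structural arguments avoid the enumeration entirely: each item is a two- or three-line deduction from the defining property of the labels $a_k,b_k$ as leaf corners of $C_{k-1}$ and $C_k$. If you want a short proof, it is worth internalizing that property and arguing as the paper does; if you prefer your route, the table must actually be written out, with care at the strip endpoints.
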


\begin{proof}
$(1).$ Without loss of generality assume $j<i$ that is $x_{a_j}>x_{a_i}$. Let $I$ be an inner interval of $\MP$ having opposite corners $a_i$ and $a_j$ and $u,v \in V(\MP)$. From Notation \ref{not:path-poly}, we have that $i\geq j+2$ because $a_{j}$ and $a_{j+1}$ lie on the same edge. If $I$ is only one cell, then $i=j+2$ and $\{u,v\}=\{b_{j+1},a_{j+1}\}$. If $I$ has cells $D_1,\ldots, D_k$, then $u$ and $a_i$ are leaf corners of $D_k$, hence $u=b_{i-1}$. That is $x_u,x_v>x_{a_i}$ and $x_{a_i}x_{a_j}$ is not a leading monomial.\\
$(2).$ If $i>j$, by similar arguments to (1), one can show that $x_{a_i}x_{b_j}\notin \ini(I_\MP)$.\\
$(3).$ If $x_{b_i}x_{b_j}\in \ini(I_\MP)$ with $i< j$, since $b_j$ is opposite to both $b_i$ and $a_j$, then $a_j$ is on the same edge interval of $b_i$. Since $a_{j}$ is the leaf corner of $C_{j-1}$ and the other one is $b_{j-1}$, then $b_i$ is also opposite to $b_{j-1}$. By proceeding in this way, we have that the cell $C_{i+1}$ has opposite corners $b_{i}$ and $b_{i+1}$, and $v,a_{i+2}$ for some $v \in V(\MP)$. By definition, $b_i$ must be a corner of $C_i$, and from Remark \ref{rem:Ci+1} $a_{i+2} \notin C_{i}$, hence ${b,v}$ is an edge of $C_i$, by definition $a_{i+1}$ is a leaf corner of $C_i$, it is opposite to $b_{i+1}$ and hence is opposite to $v$, that is $v \in C_{i-1}$. \\
$(4)$. By construction, $b_i$ is the leaf corner of $C_i$. Since $u$ and $b_i$ are opposite corners, then $a_{i+1}$ lies on the same edge intervals of $u$ and $b_i$, hence $x_{a_{i+1}}|f^-$.
\end{proof}

\section{Levelness of path polyominoes}\label{sec:level}
For rest of the article,  let $\MP=C_1 C_2 \cdots C_\ell$ be a path polyomino with $\rk(\MP)=\ell$ and $\mathcal{C}=\{I_1,\dots,I_s\}$ be its set of the maximal cell intervals. By combining \cite[Theorem 2.1]{HM} and \cite[Corollary 3.3]{HQS}, we have the following:
\begin{lemma}\label{lem:dim-of-poly}
	Let $\MP$ be a simple polyomino and $I_{\MP}$ be the polyomino ideal. Then $S/I_{\MP}$ is a Cohen-Macaulay domain with $\dim(S/I_{\MP})=|V(\MP)|-\rk(\MP)$.
\end{lemma}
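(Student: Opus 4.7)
The statement is essentially a compilation lemma: both halves are available in the literature and the task is to glue them together cleanly. The plan is therefore very short.

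First, I would invoke \cite[Theorem 2.1]{HM}, which establishes that for a simple polyomino $\MP$ the coordinate ring $S/I_{\MP}$ is a Cohen-Macaulay domain. This takes care of the qualitative part of the claim (domain and Cohen-Macaulay); no further argument is needed since simpleness is exactly the hypothesis under which that theorem applies.

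Next, for the numerical assertion, I would appeal to \cite[Corollary 3.3]{HQS}, which identifies the Krull dimension of $S/I_{\MP}$ with $|V(\MP)|-\rk(\MP)$. Equivalently, it asserts that the height of $I_{\MP}$ equals the rank of $\MP$ (the number of cells), so that
\[
\dim(S/I_{\MP})=\dim(S)-\mathrm{ht}(I_{\MP})=|V(\MP)|-\rk(\MP),
\]
using that $S=\K[x_v : v\in V(\MP)]$ is a polynomial ring in $|V(\MP)|$ variables.

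Combining these two results gives the lemma in one line: $S/I_{\MP}$ is a Cohen-Macaulay domain and its dimension is $|V(\MP)|-\rk(\MP)$. There is no genuine obstacle here; the only thing to check is that the hypotheses of both cited results (simpleness of $\MP$) match the hypothesis of the lemma, which they do verbatim. In particular, no thinness assumption is required for this statement, even though thinness will be in force in the subsequent sections.
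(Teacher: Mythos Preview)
Your proposal is correct and matches the paper's approach exactly: the lemma is stated as an immediate consequence of \cite[Theorem 2.1]{HM} (Cohen--Macaulay domain) and \cite[Corollary 3.3]{HQS} (dimension formula), with no additional argument.
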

Let $\MP$ be a path polyomino. Since a path polyomino is a simple one, by Lemma \ref{lem:dim-of-poly},
$S/I_{\MP}$ is a Cohen-Macaulay domain. We now study the levelness of $S/I_{\MP}$, by the socle of a ring, namely $R/J_\MP$, that has a clear connection with the combinatorial properties of $S/I_{\MP}$. The following notation will be used from now on.
\begin{notation}\label{not:R-and-J_P}
	Let $L=(x_a,x_{a_1}-x_{b_1}, \ldots, x_{a_\ell}-x_{b_\ell}, x_b)\subseteq S$. Let $\phi:S \to R=\KK[\y_1,\dots,\y_\ell]$ be the map such that $\phi(x_a)=\phi(x_{b})=0$ and $\phi(x_{a_i})=\phi(x_{b_i})=\y_i$ for $1\leq i\leq \ell$. Then it can be noted that $S/(I_{\MP}+L)\simeq R/J_{\MP}$, where $J_{\MP}$ is obtained by replacing $x_{a_i}$ and $x_{b_i}$ by $\y_i$ in the generating set of $I_{\MP}$. Also, let ``$<$" denote the graded reverse lexicographic order in $R$ induced by $\y_1>\dots>\y_\ell$. 
\end{notation}
\begin{proposition}\label{prop:regseq}
Let $\MP$ be a path polyomino as in Notation \ref{not:path-poly}. Then 
\[
x_a,x_{a_1}-x_{b_1}, \ldots, x_{a_\ell}-x_{b_\ell}, x_b
\]
is a linear system of parameters for $S/I_\MP$.
\end{proposition}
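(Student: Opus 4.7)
The plan is to identify $S/(I_\MP + L)$ with $R/J_\MP$ via the map $\phi$ of Notation \ref{not:R-and-J_P}, compute dimensions, and reduce the claim to showing that $R/J_\MP$ is Artinian.

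First I would note that the labeling in Notation \ref{not:path-poly} exhibits $V(\MP)$ as a set of exactly $2\ell + 2$ vertices, so that Lemma \ref{lem:dim-of-poly} together with $\rk(\MP) = \ell$ gives $\dim S/I_\MP = \ell + 2 = |L|$. Consequently $L$ is a linear system of parameters if and only if $\dim R/J_\MP = 0$, which in turn amounts to showing that each variable $y_i$ is nilpotent modulo $J_\MP$.

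To obtain such nilpotence, I would extract, for each $i \in \{1, \ldots, \ell\}$, a binomial of the right shape in $J_\MP$ as the $\phi$-image of the $2$-minor of a suitably chosen inner interval of $\MP$. For $i = 1$, the $2$-minor of $C_1$, namely $\pm(x_a x_{a_2} - x_{a_1} x_{b_1})$, maps to $\pm y_1^2$ because $\phi(x_a) = 0$; symmetrically the $2$-minor of $C_\ell$ produces $y_\ell^2 \in J_\MP$. For $2 \leq i \leq \ell - 1$, I would take the inner interval $I_i$ of $\MP$ whose diagonal corners are $\{a_i, b_i\}$ --- such an interval exists by the very definition of $b_i$ as being opposite $a_i$ --- and check that its antidiagonal corners are tagged $y_{i-1}$ and $y_{i+1}$, so that the corresponding $2$-minor gets sent by $\phi$ to $\pm(y_i^2 - y_{i-1} y_{i+1})$. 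Depending on which of the three local configurations of Remark \ref{rem:Ci+1} is realized at $C_i$, $I_i$ is either the cell $C_i$ itself or the $1 \times 2$ inner interval $C_{i-1} \cup C_i$, and in each subcase a direct inspection confirms the claimed labels of the antidiagonal corners.

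Finally, from these relations the nilpotence of each $y_i$ follows by induction on $i$: the base case is $y_1^2 \equiv 0$, and for $2 \leq i \leq \ell - 1$ the inductive step reads
\[
y_i^{2^i} \;=\; (y_i^2)^{2^{i-1}} \;\equiv\; (y_{i-1}\, y_{i+1})^{2^{i-1}} \;=\; y_{i-1}^{2^{i-1}}\, y_{i+1}^{2^{i-1}} \;\equiv\; 0 \pmod{J_\MP},
\]
while the endpoint $y_\ell^2 \equiv 0$ is immediate. Hence $R/J_\MP$ is Artinian, and $L$ is a linear system of parameters for $S/I_\MP$. The main obstacle I anticipate is the reflex configuration (case C of Remark \ref{rem:Ci+1}) where $a_i \notin C_i$: there one must use the two-cell inner interval $[a_i, b_i]$ spanning $C_{i-1}$ and $C_i$, and verify by a direct combinatorial check that its antidiagonal corners carry the labels $y_{i-1}$ and $y_{i+1}$, a point that relies delicately on the labeling conventions of Notation \ref{not:path-poly}.
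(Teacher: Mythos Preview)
Your reduction to showing that $R/J_\MP$ is Artinian is exactly what the paper does, but the paper reaches Artinianness by a different and somewhat shorter route. Rather than pinning down \emph{both} antidiagonal corners of $[a_i,b_i]$, the paper only uses that one of them is $a_{i+1}$; the image of the $2$-minor is then $y_i^2 - y_{i+1}y_j$ for \emph{some} $j$, and under the graded reverse lexicographic order with $y_1>\cdots>y_\ell$ the term $y_i^2$ is automatically the leading term regardless of $j$. Hence $y_i^2\in\ini_<(J_\MP)$ for all $i$, and $R/J_\MP$ has the same Hilbert function as the Artinian ring $R/\ini_<(J_\MP)$. This sidesteps the case analysis (in particular the reflex configuration you flag) at the cost of introducing a term order. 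Your argument, by contrast, is more elementary---no Gr\"obner machinery---but it requires the extra combinatorial verification that the fourth corner always carries the label $y_{i-1}$, and then the inductive nilpotence bound.

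One small imprecision: your treatment of $i=\ell$ by ``symmetry'' with $i=1$ is not quite right, because the labelling is built left to right and $a_\ell$ need not be a corner of $C_\ell$ (the same case~C obstruction applies at the last cell). The clean fix is to use the inner interval $[a_\ell,b_\ell]$ instead of $C_\ell$: one of its antidiagonal corners is $b$, so its $2$-minor maps under $\phi$ to $y_\ell^2 - 0 = y_\ell^2$, and the rest of your induction goes through unchanged.
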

\begin{proof}
Let $L=(x_a,x_{a_1}-x_{b_1}, \ldots, x_{a_\ell}-x_{b_\ell}, x_b)$, and let $R$ and $J_{\MP}$ be as in Notation \ref{not:R-and-J_P}. Let us consider the graded reverse lexicographic order ``$<$" in $R$ induced by $\y_1>\dots>\y_\ell$. We claim that $\y_i^2 \in \ini(J_\MP)$ for all $i=1,\ldots,\ell$. From Notation \ref{not:path-poly}, we have that for any $i=1,\ldots, \ell$, $a_i$ and $b_i$ are opposite corners, and $a_{i+1}$ is on the same edge intervals of both $b_i$ and $a_i$. Hence, there exists an inner $2$-minor in $I_\MP$ of the form $x_{a_i}x_{b_i}-x_{a_{i+1}}x_v$ for some $v \in V(\MP)$. The image in $R/J_\MP$ of such a binomial is $\y_{i}^2-\y_{i+1}\y_{j}$ for some $j$. We have that $\y_i^2 \in \ini(J_\MP)$, that is the ring $R/J_\MP$ is $0$-dimensional and hence the assertion follows. 
\end{proof}

\begin{proposition}
	Let $\mathcal{P}$ be a path polyomino. Then the generators of $J_{\MP}$ form a Gr\"{o}bner basis. Moreover,
	\[
	\ini(J_{\MP})=(\y_i\y_j: \exists \ k\in[s], C_i,C_j\in I_k ).
	\]
\end{proposition}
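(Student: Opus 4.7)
The plan is to identify $\ini_<(J_\MP)$ directly; the Gröbner basis claim then follows, since the leading monomials of the listed generators of $J_\MP$ are exactly the generators of the ideal $M := (\y_i\y_j \mid \exists k \in [s],\ C_i, C_j \in I_k) \subseteq R$.

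First I would show $M \subseteq \ini_<(J_\MP)$. By thinness of $\MP$ together with condition (3) of Definition \ref{def:path}, the cells of any maximal interval $I_k$ form a consecutive block $C_{p_k}, C_{p_k+1}, \ldots, C_{q_k}$ in the path ordering; otherwise a loop of the path would force a $2\times 2$ sub-polyomino, contradicting thinness. For $p_k \leq i \leq j \leq q_k$, the rectangle spanning $C_i, \ldots, C_j$ is a proper inner interval of $\MP$, and a short induction along $I_k$ using Notation \ref{not:path-poly} and Remark \ref{rem:Ci+1} shows that its four corners are $a_i, b_{i-1}, b_j, a_{j+1}$ (with boundary conventions $b_0 := a$ and $a_{\ell+1} := b$), paired into diagonals $\{a_i, b_j\}$ and antidiagonals $\{b_{i-1}, a_{j+1}\}$. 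The $\phi$-image of the associated $2$-minor $x_{a_i}x_{b_j} - x_{b_{i-1}}x_{a_{j+1}}$ is therefore $\y_i\y_j - \y_{i-1}\y_{j+1}$ (with $\y_0 = \y_{\ell+1} = 0$); in grevlex the largest-index variable on which the two monomials differ is $\y_{j+1}$, absent from $\y_i\y_j$, so $\y_i\y_j$ is the initial monomial, giving $\y_i\y_j \in \ini_<(J_\MP)$.

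For the reverse containment I would compare Hilbert series. The standard monomials of $R/M$ are exactly the squarefree products $\prod_{i \in T}\y_i$ whose support $T$ contains no two cells from a common maximal interval, i.e., the indicator sets of non-attacking rook placements on $\MP$; hence $\mathrm{HP}_{R/M}(t) = r_\MP(t)$. On the other hand, by Lemma \ref{lem:dim-of-poly} and Proposition \ref{prop:regseq}, $L$ is a regular sequence of length $\dim(S/I_\MP) = \ell+2$ on the Cohen--Macaulay ring $S/I_\MP$, and the $h$-polynomial of $S/I_\MP$ equals $r_\MP(t)$ by the thin-polyomino theorem of \cite{RR21} recalled in Section \ref{sec:prel}; hence
\[
\mathrm{HP}_{R/J_\MP}(t) = (1-t)^{\ell+2}\,\mathrm{HP}_{S/I_\MP}(t) = r_\MP(t) = \mathrm{HP}_{R/M}(t).
\]
Since passing to an initial ideal preserves Hilbert series, this together with $M \subseteq \ini_<(J_\MP)$ forces $M = \ini_<(J_\MP)$.

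The main obstacle I anticipate is the labelling step: verifying that in every orientation the interval $I_k$ may take (horizontal or vertical, entered and exited with turns of either chirality) the sub-rectangle spanning $C_i, \ldots, C_j$ really has corners $\{a_i, b_{i-1}, b_j, a_{j+1}\}$ and that these pair into diagonals as stated, with the endpoint cases $i=1$ and $j=\ell$ handled by the vanishing conventions on $\y_0$ and $\y_{\ell+1}$ (so the $\phi$-image of the $2$-minor collapses to the monomial $\y_i\y_j$ itself). Once this is pinned down by induction along $I_k$ via Remark \ref{rem:Ci+1}, the remainder of the argument is routine.
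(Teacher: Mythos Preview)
Your argument is correct and takes a genuinely different route from the paper's own proof. The paper verifies Buchberger's criterion directly: it runs through the possible $S$-pairs of generators of $J_\MP$, splitting into cases according to whether $\gcd(\ini f',\ini g')$ has degree $1$ or $2$ and whether the gcd in $R$ lifts to a gcd in $S$, and reduces each $S$-polynomial explicitly using auxiliary minors $h_1',h_2'$. Your approach instead establishes the containment $M\subseteq\ini_<(J_\MP)$ by identifying the leading terms, and then closes the gap via a Hilbert-series comparison: $R/M$ has Hilbert series $r_\MP(t)$ by the rook interpretation of its standard monomials, while $R/J_\MP$ has the same Hilbert series because $L$ is a regular sequence on the Cohen--Macaulay ring $S/I_\MP$ and the $h$-polynomial of $S/I_\MP$ equals $r_\MP(t)$ by \cite{RR21}. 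Since passing to an initial ideal preserves the Hilbert series, $M=\ini_<(J_\MP)$.

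The trade-off is that the paper's $S$-pair computation is self-contained (it does not invoke the rook-polynomial theorem of \cite{RR21}), whereas your argument is shorter and more conceptual but imports that theorem as a black box. In effect you promote what the paper later records as Corollary~\ref{cor:standard-monomial-R/J_P} to the engine of the proof rather than a consequence. Your anticipated obstacle, pinning down that the $\phi$-image of the minor for the strip $C_i,\ldots,C_j$ is $\y_i\y_j-\y_{i-1}\y_{j+1}$, is exactly the labelling bookkeeping the paper also relies on (cf.\ the proof of Proposition~\ref{prop:regseq} and the relation used in Corollary~\ref{cor:standard-monomial-R/J_P}); once you note that within a single maximal interval the shared edge of $C_r$ and $C_{r+1}$ is always $\{a_{r+1},b_r\}$ (the ``no turn'' case of Remark~\ref{rem:Ci+1}), the induction you sketch goes through.
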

\begin{proof}
Let $f'=\phi(f),g'=\phi (g)$ be two generators of $J_\MP$ with $f,g\in I_\MP$ and $f=f^+-f^-,g=g^+-g^-$, where the map $\phi$ is defined in Notation \ref{not:R-and-J_P}. \\
We have to consider $S(f',g')$ in the cases $\gcd(\ini(f'),\ini(g')) \neq 1$, that is $\deg(\gcd(\ini(f'),\ini(g'))) \in \{1,2\}$.\\
If $\deg(\gcd(\ini(f') \ini(g')))=2$, namely $\ini(f')=\ini(g')=\y_i\y_j$ with $i<j$, then from Lemma \ref{lem:inI} $x_{b_j}|\ini(f)$ and $x_{b_{j}}|\ini(g)$, that is without loss of generality we assume  $\ini(f)=x_{a_i}x_{b_j}$ and $\ini(g)=x_{b_{i}}x_{b_j}$ but this cannot happen because $a_i$ and $b_i$ are opposite corners and the polyomino is thin.\\
If $\gcd(\ini(f'),\ini(g'))=\phi(\gcd(\ini(f),\ini(g)))$, then $S(f',g')=\phi(S(f,g))\rightarrow 0$.\\
If $\gcd(\ini(f'),\ini(g'))\neq\phi(\gcd(\ini(f),\ini(g)))=1$, then $x_{a_i}|\ini(f)$ and $x_{b_i}|\ini(g)$. That is $\ini(f)=x_{a_i}x_{b_j}$ for some $j> i$ (if $j=i$ we are in the previous case) and $\ini(g)=x_u x_{b_{i}}$ with $u\in V(\MP)$. From Lemma \ref{lem:inI}.(4), we get that 
\[
f=x_{a_i} x_{b_j} - x_v x_{a_{j+1}},
\]
for some $v$ in $V(\MP)$ with $x_v>x_{b_i},x_{a_i}$. One can observe that since $a_i$ and $b_j$ are opposite corners as well as $a_i$ and $b_i$, then there is no change of direction at $C_i$, and it can not happen that $u=b_k$ with $k>i$. Moreover, from Lemma \ref{lem:inI}.(1), also the case $u=a_{k}$ with $k>i$ is not possible. Hence, we can only have $u=a_k$ or $u=b_k$ for some $k < i$. 
From Lemma \ref{lem:inI}.(4), we get that $x_{a_{i+1}} | g^-$ and
\[
g=x_u x_{b_{i}} - x_w x_{a_{i+1}},
\]
for some $w$ in $V(\MP)$.
By construction, $b_{j}$ lies on the same edge interval of $b_{i}$ as well as $u$ lies on the same edge interval of $a_i$, hence from $j>i$ we get that $h_1=x_{a_{i+1}} x_{b_j}- x_{b_i} x_{a_{j+1}}$ and $h_2=x_{u} x_{v}- x_{w} x_{a_{i}}$ are generators of $I_\MP$. We have 
\[
f'=\y_i\y_j - \phi(x_v)\y_{j+1} \ \  g'=\phi(x_u) \y_{i} - \phi(x_w) \y_{i+1}
\]
\[
h_1'=\phi(h_1)= \y_{i+1}\y_j - \y_i\y_{j+1},\ \ \ h_2'=\phi(h_2)= \phi(x_{u}) \phi(x_{v}) - \phi(x_{w})\y_{i}
\]
hence
\[
S(f',g')=\y_j\phi(x_w) \y_{i+1} - \phi(x_u)\phi(x_v) \y_{j+1}
\]
by reducing modulo $h'_1$ we get 
\[
S(f',g')=\y_i \y_{j+1} \phi(x_w)- \phi(x_u)\phi(x_v) \y_{j+1}=\y_{j+1}h_2'.
\]

\end{proof}

\begin{remark}\label{rem:CiCj}
	Let $\mathcal{P}$ be a path polyomino. Then $\y_i\y_j \in \ini(J_\MP)$ with $j\leq i$ if and only if the cells $C_i$ and $C_j$ are attacking.
\end{remark}

\begin{corollary}\label{cor:standard-monomial-R/J_P}
	The standard monomials in $R/J_{\MP}$ are the squarefree monomials $$u=\y_{i_1}\y_{i_2}\cdots \y_{i_{s'}} \text{ with }
	i_1<i_2<\cdots< i_{s'},$$
	where $s'$-rooks are placed in the non-attacking cells $C_{i_1},\dots,C_{i_{s'}}$ of $\mathcal{P}$.
\end{corollary}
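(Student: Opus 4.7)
The plan is to deduce this corollary directly from the Gröbner basis description of $J_\MP$ established in the preceding proposition. By definition, the standard monomials of $R/J_\MP$ are exactly the monomials in $R$ that are not divisible by any element of $\ini_<(J_\MP)$, so it suffices to translate the condition ``$u \notin \ini_<(J_\MP)$'' into the combinatorial language of non-attacking rooks.

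The first step is to verify squarefreeness. In the proof of Proposition \ref{prop:regseq}, for each $i \in \{1,\dots,\ell\}$ we already produced a generator of $I_\MP$ of the form $x_{a_i}x_{b_i} - x_{a_{i+1}}x_v$ coming from an inner interval whose image under $\phi$ is a binomial $y_i^2 - y_{i+1}y_j \in J_\MP$. Since the chosen graded reverse lexicographic order satisfies $y_i^2 > y_{i+1}y_j$, we get $y_i^2 \in \ini_<(J_\MP)$ for every $i$, and therefore any standard monomial must be squarefree.

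The second step handles products of distinct variables. By the explicit description
\[
\ini_<(J_\MP) = (y_i y_j : \exists\, k \in [s]\ \text{with}\ C_i, C_j \in I_k)
\]
from the preceding proposition, a product $y_i y_j$ with $i \neq j$ belongs to $\ini_<(J_\MP)$ precisely when $C_i$ and $C_j$ are contained in a common maximal cell interval, which by Remark \ref{rem:CiCj} is exactly the condition that the two cells attack each other. Consequently a squarefree monomial $y_{i_1}\cdots y_{i_{s'}}$ with $i_1 < \cdots < i_{s'}$ avoids $\ini_<(J_\MP)$ if and only if no two of the cells $C_{i_1},\dots,C_{i_{s'}}$ attack, i.e., they form a non-attacking configuration of rooks on $\MP$.

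Combining the two steps yields the stated characterization. The bound $s' \leq s$ is automatic: non-attacking rooks must lie in pairwise distinct maximal cell intervals, and there are only $s$ such intervals. I do not anticipate any real obstacle, because all the substantive work has already been carried out in the Gröbner basis computation; this corollary is essentially a reformulation of $\ini_<(J_\MP)$ in rook-theoretic terms, combined with the observation that $y_i^2$ lies in the initial ideal.
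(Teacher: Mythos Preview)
Your proposal is correct and follows essentially the same approach as the paper: both arguments read off the standard monomials directly from the description of $\ini_<(J_\MP)$ established in the preceding proposition, translating the condition $y_iy_j\in\ini_<(J_\MP)$ into the statement that $C_i$ and $C_j$ attack. Your treatment is slightly more explicit about the squarefreeness step (the paper handles it implicitly via the case $i_j=i_k$ in its reducing binomial), but the substance is the same.
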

\begin{proof}
	If two rooks are placed in the non-attacking cells $C_{i_j}$, $C_{i_{k}}$ with $i_j\leq i_k$, then $C_{i_j}$ and $C_{i_{k}}$ do not belong to the same interval. 
	By Remark \ref{rem:CiCj}, $\y_{i_j}\y_{i_{k}}\notin \ini(J_{\MP})$ for $1\leq j\leq s'-1$. Therefore, $u\notin \ini(J_{\MP})$. 
	
	Suppose two rooks are placed in the cells $C_{i_j},$ $C_{i_{k}}$ with $i_j\leq i_k$ so that they can attack each other and $\y_{i_j} \y_{i_{k}}$ divides our monomial. Then there exist $m$ such that  $i_j,i_{k}\in I_m$. Then it is reducible by 
	\[
	\y_{i_j}\y_{i_{k}}-\y_{i_j-1}\y_{i_{k}+1},
	\]
	hence our monomial is not in the standard form.
	Contradiction.	
	
\end{proof}

Now we write all the monomial generators of $J_{\mathcal{P}}$ which come from the first cell interval and last cell interval. Let $J_0$ and $J_s$ be the subideals of $J_{\mathcal{P}}$ generated by monomials coming from the first and last cell intervals respectively. Then 
$$J_0=(\y_i\y_j:1\leq i\leq j\leq l_1-i+1) \text{ and }$$
$$J_s=(\y_i\y_j:\ell\geq i\geq j\geq 2\ell-i-(l_s-1)).$$

We also recall that the set of configurations of pairwise non-attacking rooks is a simplicial complex denoted by $\RR_\MP$. Moreover for $F \in \FF(\RR_\MP)$ we set 
\[
\y_{F}=\prod\limits_{C_{i} \in F} \y_i
\]

\begin{lemma}\label{lem:generators-of-inSocle}
Let $\MP$ be a path polyomino. Then 
\[
\soc(R/\ini(J_\MP))=(\y_{F} \ | \ F \in \FF(\RR_\MP)).
\]
\end{lemma}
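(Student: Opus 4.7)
The plan is to exploit that $\ini(J_\MP)$ is a monomial ideal, so the socle of the Artinian ring $R/\ini(J_\MP)$ admits a $\KK$-basis consisting of those standard monomials $m$ for which $\y_i \cdot m \in \ini(J_\MP)$ for every $i \in \{1,\ldots,\ell\}$. By Corollary \ref{cor:standard-monomial-R/J_P}, the standard monomials are precisely the squarefree products $\y_F = \prod_{C_i \in F} \y_i$ indexed by non-attacking rook configurations $F \in \RR_\MP$. Hence it suffices to prove that $\y_F$ lies in the socle if and only if $F$ is a facet of the rook complex $\RR_\MP$.

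For the containment $(\supseteq)$, I would fix $F \in \FF(\RR_\MP)$ and any index $i$. If $i \in F$, then $\y_i \y_F$ is divisible by $\y_i^2$; the proof of Proposition \ref{prop:regseq} shows that for each $i$ the cell $C_i$ produces a generator of $J_\MP$ whose leading term is exactly $\y_i^2$, so $\y_i^2 \in \ini(J_\MP)$ and therefore $\y_i \y_F \in \ini(J_\MP)$. If instead $i \notin F$, the maximality of $F$ forces the existence of some $C_j \in F$ attacking $C_i$, and Remark \ref{rem:CiCj} yields $\y_i \y_j \in \ini(J_\MP)$, which divides $\y_i \y_F$. In either case $\y_i \y_F \in \ini(J_\MP)$, so $\y_F$ lies in $\soc(R/\ini(J_\MP))$.

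For the containment $(\subseteq)$, I would argue by contrapositive. If $F \in \RR_\MP$ is not a facet, then by definition there exists $C_i \notin F$ with $F \cup \{C_i\} \in \RR_\MP$. Corollary \ref{cor:standard-monomial-R/J_P} then identifies $\y_i \y_F = \y_{F \cup \{C_i\}}$ as a standard monomial, so $\y_i \y_F \notin \ini(J_\MP)$ and $\y_F$ cannot be in the socle. The whole argument is essentially bookkeeping built on Corollary \ref{cor:standard-monomial-R/J_P} and Remark \ref{rem:CiCj}, so I do not anticipate a substantive obstacle; the only subtlety worth double-checking is that $\y_\ell^2$ lies in $\ini(J_\MP)$ even though the vertex $b$ of $C_\ell$ is killed by $\phi$, which holds because the inner $2$-minor of $C_\ell$ having $a_\ell,b_\ell$ as diagonal corners and $b$ as one anti-diagonal corner reduces under $\phi$ to exactly $\y_\ell^2$.
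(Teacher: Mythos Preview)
Your proof is correct and follows essentially the same approach as the paper's: both directions rest on the characterization of standard monomials via Corollary~\ref{cor:standard-monomial-R/J_P}, the fact that $\y_i^2\in\ini(J_\MP)$ from Proposition~\ref{prop:regseq}, and the identification of attacking pairs with generators of $\ini(J_\MP)$ from Remark~\ref{rem:CiCj}. Your write-up is in fact slightly more careful than the paper's in explicitly separating the case $C_i\in F$ (handled via $\y_i^2$) from $C_i\notin F$, and in noting that the boundary case $\y_\ell^2$ is covered because $\phi$ kills $x_b$.
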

\begin{proof}
The socle of a monomial ideal is a monomial ideal. Let $u$ be a monomial of $R \setminus J_\MP$. Since $\y_i^2 \in \ini (J_\MP)$, then $u$ is squarefree and $u=\y_F$ for some $F \in \RR_\MP$. Assume that $F$ is not maximal, then there exists $F' \in \FF(\RR_{\MP})$ such that $F \subset F'$ and let $C_{i}\in F' \setminus F$. Since $C_i$ attacks no cell in $F$, then $u \y_{i} \notin \ini(J_\MP)$, contradicting the hypothesis.\\
Conversely, assume that $F \in \FF(\RR_{\MP})$ and $u=y_F$. Now we prove that for all $i\in \{1,\ldots, \rk \MP \}$, $u \y_{i} \in \ini (J_\MP)$. Fix $i \in \{1,\ldots, \rk \MP\}$. Since $F$ is maximal, then the cell $C_i$ is attacked by a cell $C_j$ in $F$, hence it follows from Remark \ref{rem:CiCj} that $\y_i\y_j \in \ini (J_\MP)$.  
\end{proof}
It is interesting to observe that the facets of maximum cardinality of $\FF(\RR_\MP)$ induces also maximum degree elements (monomials) in $\soc(R/J_\MP)$, as pointed out in Corollary \ref{cor:maxdeg}.

The following is an immediate consequence of Lemma \ref{lem:F+1} and Lemma \ref{lem:generators-of-inSocle} as $\beta_{p,p+i}(R/I)$ is the number of elements of degree $i$ in $\soc(R/I)$ for any ideal $I\subseteq R$, where $p=\pd(R/I)$.
\begin{corollary}
Let $\MP$ be a path polyomino $p=\rk(\MP)$ and $t< r(\MP)$. If $\beta_{p,p+t}(R/\ini (J_\MP))\neq 0$ then $\beta_{p,p+t+1}(R/\ini (J_\MP))\neq 0$.
\end{corollary}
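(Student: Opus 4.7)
The hint after the corollary tells us that this should be immediate from Lemma \ref{lem:F+1} and Lemma \ref{lem:generators-of-inSocle} once we translate the top Betti number into a socle dimension, so my plan is exactly that three-step chain. First, I would observe that $R/\ini(J_\MP)$ is Artinian: the proof of Proposition \ref{prop:regseq} shows that $\y_i^2 \in \ini(J_\MP)$ for every $i \in \{1,\dots,\ell\}$, so $\depth(R/\ini J_\MP)=0$ and Auslander--Buchsbaum yields $\pd(R/\ini J_\MP)=\dim R=\ell=\rk(\MP)=p$. Thus $\beta_{p,p+t}$ really is a top Betti number.

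Second, I would invoke the standard fact that for an Artinian graded quotient of a polynomial ring of dimension $p$, the top Betti numbers record the socle in each degree, namely $\beta_{p,p+t}(R/\ini J_\MP)=\dim_\KK\soc(R/\ini J_\MP)_t$. Combined with Lemma \ref{lem:generators-of-inSocle}, which identifies $\soc(R/\ini J_\MP)$ with the $\KK$-span of the squarefree monomials $\y_F$ as $F$ runs over the facets of $\RR_\MP$ (and $\deg\y_F=|F|$), the nonvanishing hypothesis $\beta_{p,p+t}(R/\ini J_\MP)\neq 0$ becomes equivalent to the existence of some facet $F\in\FF(\RR_\MP)$ with $|F|=t$.

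Third, since $t<r(\MP)$, I apply Lemma \ref{lem:F+1} to produce a facet $F'\in\FF(\RR_\MP)$ with $|F'|=t+1$. The associated monomial $\y_{F'}$ is then a nonzero element of $\soc(R/\ini J_\MP)_{t+1}$, hence $\beta_{p,p+t+1}(R/\ini J_\MP)\neq 0$, which is exactly the desired conclusion.

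The only mild subtlety I expect is making the identification between $\beta_{p,p+i}(R/\ini J_\MP)$ and $\dim_\KK\soc(R/\ini J_\MP)_i$ fully explicit (one reads it off the last nonzero term of a minimal graded free resolution, using Artinianness to guarantee that the resolution truly stops at homological degree $p$); beyond that the argument is pure bookkeeping with the two cited lemmas, so I do not anticipate any real obstacle.
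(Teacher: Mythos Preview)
Your proposal is correct and follows precisely the approach indicated in the paper: the paper states the corollary as an immediate consequence of Lemma~\ref{lem:F+1} and Lemma~\ref{lem:generators-of-inSocle} together with the identification $\beta_{p,p+i}(R/I)=\dim_\KK\soc(R/I)_i$ for $p=\pd(R/I)$, and you have spelled out exactly these three ingredients (including the verification, via Proposition~\ref{prop:regseq}, that $\pd(R/\ini J_\MP)=\rk(\MP)$).
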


\begin{corollary}\label{cor:LevInJ}
Let $\MP$ be a path polyomino. Then $R/\ini (J_\MP)$ is level if and only if $\RR_\MP$ is pure.
\end{corollary}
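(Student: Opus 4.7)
The plan is to reduce levelness of $R/\ini(J_\MP)$ to a purely combinatorial statement about the socle, then apply the explicit description from Lemma \ref{lem:generators-of-inSocle}.

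First I would observe that $R/\ini(J_\MP)$ is a standard graded Artinian $\KK$-algebra. Indeed, by Proposition \ref{prop:regseq} the linear forms $x_a,x_{a_1}-x_{b_1},\ldots,x_{a_\ell}-x_{b_\ell},x_b$ form a linear system of parameters for $S/I_\MP$, so $R/J_\MP$ is $0$-dimensional, and since the generators of $J_\MP$ form a Gr\"obner basis, $R/\ini(J_\MP)$ is $0$-dimensional as well. For such a ring, levelness is equivalent to the statement that the graded socle $\soc(R/\ini(J_\MP))$ is concentrated in a single degree, i.e.\ all minimal generators of the socle have the same degree.

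Next I would invoke Lemma \ref{lem:generators-of-inSocle}, which gives
\[
\soc(R/\ini(J_\MP))=(\y_F \mid F \in \FF(\RR_\MP)).
\]
Since $\ini(J_\MP)$ is a monomial ideal, the socle is spanned as a $\KK$-vector space by standard monomials, and the $\y_F$ for $F\in\FF(\RR_\MP)$ are precisely those standard monomials (by the description together with Corollary \ref{cor:standard-monomial-R/J_P}). Distinct facets $F\neq F'$ yield distinct monomials $\y_F\neq \y_{F'}$, and each $\y_F$ is nonzero in $R/\ini(J_\MP)$, so the set $\{\y_F \mid F \in \FF(\RR_\MP)\}$ is a $\KK$-basis of the socle and in particular a minimal set of generators of the socle as a module.

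The degree of $\y_F$ equals $|F|$, so the socle is generated in a single degree if and only if all facets of $\RR_\MP$ have the same cardinality, which is precisely the statement that $\RR_\MP$ is pure. This proves the equivalence. There is no substantive obstacle here: the only small point to be careful about is that a socle basis of monomials automatically constitutes a minimal generating set (since socle elements are annihilated by the graded maximal ideal), so one does not accidentally conflate ``spanned in one degree'' with ``minimally generated in one degree''.
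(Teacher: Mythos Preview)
Your proof is correct and follows exactly the approach implicit in the paper: the corollary is stated there without proof, as an immediate consequence of Lemma~\ref{lem:generators-of-inSocle}, and your argument spells out precisely this deduction. The only additional care you take---checking Artinianity and noting that the $\y_F$ form a $\KK$-basis of the socle rather than merely a generating set---is appropriate and does not deviate from the intended reasoning.
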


By using Theorem \ref{thm:pure}, we obtain the following characterization of polyominoes having $R/\ini (J_\MP)$ a level.
\begin{theorem}
Let $\MP$ be a path polyomino with $\MC=\{I_1,I_2,\ldots ,I_s\}$. Then $R/\ini (J_\MP)$ is level with $r(\MP)=d$ if and only if the followings hold
\begin{enumerate}
\item $s=2d-1$.
\item for any $2\leq k\leq s-1$ we have 
\[
\begin{cases}
l_k > 2 &\mbox{ if $k$ is odd}\\ 
l_k = 2 &\mbox{ if $k$ is even}\\ 
\end{cases}
\]
\end{enumerate}
\end{theorem}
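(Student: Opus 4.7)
The strategy is to combine Corollary \ref{cor:LevInJ} with Theorem \ref{thm:pure}: $R/\ini(J_\MP)$ is level with $r(\MP)=d$ if and only if $\MP$ admits a super partition $\AA\subseteq \MC$ with $|\AA|=d$. I would thus translate the theorem into the problem of classifying the path polyominoes that admit such a super partition.

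For the forward implication, I would first note that in a thin path polyomino $I_k\cap I_{k+1}$ is a single corner cell while $I_k\cap I_j=\varnothing$ for $|k-j|\geq 2$, so $\AA$ cannot contain two consecutive intervals. Moreover, every $I_k$ with $l_k\geq 3$, or with $k\in\{1,s\}$, has an interior cell that lies in no other interval and therefore forces $I_k\in \AA$. Starting from the forced $I_1\in \AA$ this excludes $I_2$, whence the corner between $I_2$ and $I_3$ must be covered by $I_3$, and inductively $\AA=\{I_k : k \text{ odd}\}$. In order for the forced $I_s\in \AA$ to be at an odd index, $s$ must be odd; together with $|\AA|=(s+1)/2=d$ this gives $s=2d-1$, that is (1). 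Every even-indexed $I_k$ with $2\leq k\leq s-1$ is excluded from $\AA$ and hence must have no interior cell, forcing $l_k=2$: this is the even half of (2). To see the odd half, suppose $I_k\in \AA$ with $k$ odd and $1<k<s$ has $l_k=2$. Its two cells are the corners shared with $I_{k-1}$ and $I_{k+1}$; picking any $D_1 \in I_{k-1}\setminus I_k$ and $D_2\in I_{k+1}\setminus I_k$ (both non-empty since every maximal interval has length at least $2$) yields a non-attacking configuration, as $I_{k-1}\cap I_{k+1}=\varnothing$, of size $2=|I_k|$ witnessing that $I_k$ is embedded and contradicting the super-partition hypothesis.

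For the backward implication, assuming (1)--(2) I would take $\AA=\{I_k : k \text{ odd}\}$, which has size $(s+1)/2=d$. Condition (2) ensures that $\AA$ is a partition of $\MP$: the even-indexed intervals have no interior cell to cover, and every corner cell lies in an adjacent odd-indexed interval. Each $I_k\in \AA$ is not embedded: any putative witness $F$ must attack every genuinely interior cell of $I_k$ from within $I_k$ itself (since such a cell lies in no other interval), so all corresponding $D_j$ land in $I_k$ and pairwise attack each other, forcing $l_k-2\leq 1$ in the non-endpoint case, while a parallel argument handles $I_1$ and $I_s$. Placing one rook in an interior cell of each $I_k\in \AA$ then yields $d$ pairwise non-attacking rooks, and since by (2) every cell of $\MP$ lies in some odd-indexed interval one gets $r(\MP)=d$. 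Theorem \ref{thm:pure} then gives purity of $\RR_\MP$ of dimension $d-1$, and Corollary \ref{cor:LevInJ} yields the levelness of $R/\ini(J_\MP)$.

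The main obstacle is the embedded/non-embedded dichotomy for $I_k\in \AA$ according to $l_k$: middle length-$2$ intervals are always embedded, while intervals with $l_k\geq 3$ and the endpoint intervals $I_1,I_s$ are never embedded. The embedded direction is handled by exhibiting the witnesses inside the two disjoint neighboring intervals; the non-embedded direction rests on the pigeonhole observation that at least two of the required witnesses would have to lie inside $I_k$, violating the non-attacking property.
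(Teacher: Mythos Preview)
Your overall strategy is the same as the paper's: reduce to Corollary~\ref{cor:LevInJ} and Theorem~\ref{thm:pure}, and then analyse when a path polyomino admits a super partition of size $d$. The paper proceeds by induction on $d$ (removing $I_1$ and passing to $\MP'=\MP\setminus I_1$), while you argue directly by showing $\AA=\{I_k:k\text{ odd}\}$ is forced; your forward implication is correct and, if anything, cleaner than the paper's inductive version.

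There is, however, a genuine gap in your backward implication, in the step ``each $I_k\in\AA$ is not embedded''. Your pigeonhole argument shows that, for a non-endpoint odd $k$, a putative witness $F=\{D_1,\dots,D_{l_k}\}$ must place all $D_j$ corresponding to interior cells of $I_k$ inside $I_k$, and since these pairwise attack you conclude $l_k-2\leq 1$. But condition~(2) only guarantees $l_k\geq 3$, so the case $l_k=3$ is not excluded: there is a single interior cell $C$, and a single $D\in I_k$ attacking it is perfectly compatible with $F$ being non-attacking so far. You need one more step: since $D\neq C$ (recall the definition of ``attacking'' is for a pair), $D$ must be one of the two corner cells of $I_k$, say the one shared with $I_{k-1}$; then the witness $D'$ attacking that corner cell must lie in $I_{k-1}\cup I_k$, but every such cell attacks $D$, contradicting that $F$ is non-attacking. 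The same remark applies to the endpoints when $l_1=2$ or $l_s=2$: your ``parallel argument'' yields $l_1-1\leq 1$, which again is no contradiction; the extra step (the unique $D\in I_1$ is the corner cell, blocking the witness for that corner) is what finishes it. Once this small case is added, your argument is complete.
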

\begin{proof}
According to Corollary \ref{cor:LevInJ}, $R/\ini (J_\MP)$ is level if and only if $\RR_\MP$ is pure, that is if and only if $\MP$ admits a super partition $\AA$. 
We proceed by induction on $d$. We start from $d=2$.  $\AA$ contains two intervals, since $I_1 \in \AA$, then $I_2 \notin \AA$ and $s=3$. If $I_2$ contains single cell then $I_2 \in \AA$, hence $|I_2|=2$.
We assume $d > 2$ and the thesis holds true for $d-1$. From similar arguments, one gets that $|I_2|=2$ and that the polyomino $\MP'=\MP\setminus I_1$ is super partitionable, with partition $\AA'=\AA\setminus I_1$. Hence $s-2=2(d-1)-1$, that is $s=2d-1$ and $l_k > 2$ for an odd $k \in \{ 4,\ldots, s-1 \}$ and $l_k = 2$ for even $k \in \{4,\ldots, s-1\}$. We prove that $l_3 > 2$. If $l_3=2$, then $I_3=[C_{i},C_{i+1}]$ and $C_{i-1} \in I_1$, $C_{i+2} \in I_5$ with $C_{i-1}$ (resp. $C_{i+2}$) attacking $C_{i}$ (resp. $C_{i+1}$). That is $I_3$ is an embedded interval. Contradiction.
\end{proof}

We now study the levelness of $R/J_\MP$ with the help of the following
\begin{lemma}\label{lem:inSoc}
Let $I \subseteq R$ be an ideal with $\dim(R/I)=0$. Then for any monomial ordering $<$, one has
\[
\ini (\soc(R/I)) \subset \soc(R/\ini(I)). 
\]
\end{lemma}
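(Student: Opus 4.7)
The plan is to unpack the definitions and observe that the socle condition passes to initial ideals essentially because multiplication by a variable commutes with taking the initial term. The argument is very short; the only subtlety is to be precise about how $\ini_{<}(\soc(R/I))$ is identified as a subspace of $R/\ini_{<}(I)$.

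First I would fix a reduced Gr\"obner basis of $I$ with respect to $<$, so that every element of $R/I$ has a unique normal-form representative whose support consists of monomials that are standard for $\ini_{<}(I)$. Via normal forms, this gives the $\KK$-linear isomorphism $R/I \cong R/\ini_{<}(I)$, and $\ini_{<}(\soc(R/I))$ is the image of $\soc(R/I)$ under this identification, i.e., the $\KK$-span of the leading monomials of normal-form lifts of nonzero socle elements.

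Next, I would take any nonzero $\bar f \in \soc(R/I)$ and let $f \in R$ be its normal-form lift. By definition of the socle, $\y_i f \in I$ for every variable $\y_i$. The key observation is that, since $\y_i$ is a single monomial, $\ini_{<}(\y_i f) = \y_i\,\ini_{<}(f)$; therefore $\y_i\,\ini_{<}(f) \in \ini_{<}(I)$ for every $i$. This is exactly the condition for $\ini_{<}(f)$ to represent an element of $\soc(R/\ini_{<}(I))$. Passing to the $\KK$-span over all such $\bar f$ yields the desired inclusion.

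No real obstacle arises; the statement holds for any ideal $I$ and any monomial order, and the zero-dimensional hypothesis is used only to ensure that the relevant socles are finite-dimensional (the feature needed for its application in the next section via the isomorphism $S/(I_\MP+L) \cong R/J_\MP$).
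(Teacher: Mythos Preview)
Your proof is correct and follows essentially the same approach as the paper: both arguments take a socle element $f$, use that $\y_i f \in I$ for all $i$, and conclude that $\y_i\,\ini_{<}(f)=\ini_{<}(\y_i f)\in\ini_{<}(I)$, so $\ini_{<}(f)\in\soc(R/\ini_{<}(I))$. Your write-up is a bit more careful about identifying $\ini_{<}(\soc(R/I))$ inside $R/\ini_{<}(I)$ via normal forms, and your observation that the zero-dimensional hypothesis is not actually used in the argument is accurate.
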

\begin{proof}
Let $u \in \ini (\soc(R/I)) $, then there exists $g$ such that $f=u+g$ and $f\y_{i} \in I$ for all $1\leq i\leq n$. Then $u\y_{i} \in \ini(I)$ for all $1\leq i\leq n$, hence $u \in \soc(R/\ini(I))$.
\end{proof}

\begin{lemma}\label{lem:FF'}
Let $\MP$ be a path polyomino, let $F$ be a facet of $\RR_\MP$. If there exists $k$ such that $C_{k} \in F$ and 
\[
F'=(F \setminus \{C_{k}\}) \cup \{C_{k-1},C_{k+1}\}
\]
is a facet of $\RR_\MP$, then $\y_F \notin \ini(\soc(R/J_\MP))$.
\end{lemma}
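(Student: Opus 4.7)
I would proceed by contradiction. Suppose $\y_F \in \ini_<(\soc(R/J_\MP))$, so some $f \in \soc(R/J_\MP)$ with $\ini_<(f) = \y_F$ exists; writing $f$ in normal form, $f = \y_F + \sum_G c_G \y_G$ with $c_G \in \K$ and, by Corollary \ref{cor:standard-monomial-R/J_P}, each $\y_G$ a standard monomial of degree $d := |F|$ strictly less than $\y_F$ in the graded reverse lex order.

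The first step is to establish $\y_k^2 - \y_{k-1}\y_{k+1} \in J_\MP$. Since $F'$ is a facet, $C_{k-1}$ and $C_{k+1}$ lie in distinct maximal intervals, so there is a change of direction at $C_k$. Under the labelling of Notation \ref{not:path-poly}, at such a change of direction the minimal inner interval of $\MP$ containing $a_k$ and $b_k$ as diagonal corners has its antidiagonal corners labelled so that they map under $\phi$ to $\y_{k-1}$ and $\y_{k+1}$; its inner $2$-minor therefore produces the stated binomial in $J_\MP$. Writing $m := \y_F/\y_k$, we obtain $\y_F \y_k = \y_k^2 m \equiv \y_{k-1}\y_{k+1} m = \y_{F'} \pmod{J_\MP}$, and $\y_{F'}$ is a (nonzero) standard monomial by Corollary \ref{cor:standard-monomial-R/J_P}, since $F'$ is a non-attacking configuration.

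The relation $f \y_k \equiv 0 \pmod{J_\MP}$ then gives
\[
\y_{F'} + \sum_G c_G \cdot \mathrm{NF}(\y_G \y_k) \equiv 0 \pmod{J_\MP},
\]
so some $\mathrm{NF}(\y_G \y_k)$ must contain $\y_{F'}$ with nonzero coefficient. I would now show this is impossible. Since Gr\"obner-basis reductions only decrease monomials in the term order, it is enough to reduce $\y_G \y_k$ in a single step to a monomial strictly less than $\y_{F'}$; subsequent reductions yield only smaller monomials. I split on whether $k \in G$.

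If $k \notin G$, an exponent-vector comparison gives $\y_G \y_k < \y_{F'}$: since $k \in F \setminus G$ we have $\max(G \triangle F) \ge k$, and $\y_G < \y_F$ is equivalent to $\max(G \triangle F) \in G \setminus F$, so in fact $\max(G \triangle F) > k$; one checks that in the subcase $\max(G \triangle F) > k+1$ the last nonzero exponent of $\y_G\y_k / \y_{F'}$ is $+1$ at $\max(G \triangle F)$, while in the subcase $\max(G \triangle F) = k+1$ it is $+1$ at position $k$. If $k \in G$, apply $\y_k^2 \to \y_{k-1}\y_{k+1}$ to get $\y_{k-1}\y_{k+1}\y_{G \setminus \{k\}}$. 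Cancelling the common factor $\y_{k-1}\y_{k+1}$ against $\y_{F'} = \y_{k-1}\y_{k+1}\y_{F \setminus \{k\}}$ reduces the inequality to $\y_{G \setminus \{k\}} < \y_{F \setminus \{k\}}$, which is equivalent to $\y_G < \y_F$ using $G \triangle F = (G \setminus \{k\}) \triangle (F \setminus \{k\})$ when $k \in G \cap F$.

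In either case $\y_{F'}$ cannot appear in $\mathrm{NF}(\y_G \y_k)$, yielding the contradiction and proving $\y_F \notin \ini_<(\soc(R/J_\MP))$. The main obstacle will be the first step: Proposition \ref{prop:regseq} only asserts the existence of \emph{some} relation $\y_k^2 - \y_{k+1}\y_j \in J_\MP$, and the careful work is to verify, using the change-of-direction hypothesis and the labelling of Notation \ref{not:path-poly}, that one can take $j = k-1$.
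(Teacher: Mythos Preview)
Your argument is correct and is in fact a more rigorous version of the paper's own proof, which simply observes that $\y_k\y_F$ reduces modulo $J_\MP$ to the standard monomial $\y_{F'}\neq 0$ and then writes ``Hence $\y_F\notin\ini_<(\soc(R/J_\MP))$'' without checking, as you do, that the lower terms of $f$ cannot cancel $\y_{F'}$. Your flagged ``main obstacle'' dissolves more easily than you suggest: by Notation~\ref{not:path-poly} and Remark~\ref{rem:Ci+1} the cell $C_k$ itself has opposite-corner pairs $\{a_k,b_k\}$ and $\{b_{k-1},a_{k+1}\}$, so its inner $2$-minor $x_{a_k}x_{b_k}-x_{b_{k-1}}x_{a_{k+1}}$ maps under $\phi$ to $\y_k^2-\y_{k-1}\y_{k+1}$ for every $2\le k\le \ell-1$, with no change-of-direction hypothesis required.
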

\begin{proof}
We consider $\y_k \y_F$. The relation $\y_{k}^2-\y_{k-1}\y_{k+1} \in J_\MP$, that is $\y_k \y_F$ reduces to $\y_{F'}$. By Corollary \ref{cor:standard-monomial-R/J_P} $y_{F'} \notin \ini(J_\MP)$.  Hence $\y_{F} \notin \ini(\soc(R/J_\MP))$.
\end{proof}

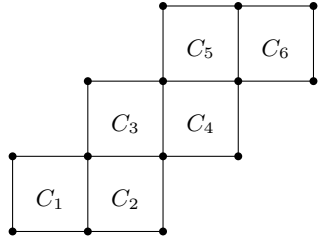
\begin{figure}[H]
	\begin{tikzpicture}[scale=1]
		\draw  (3,0)-- (3,-1);
		\draw  (4,-1)-- (4,0);
		\draw  (4,0)-- (3,0);
		\draw  (3,-1)-- (4,-1);
		\draw  (4,-1)-- (5,-1);
		\draw  (5,0)-- (5,-1);
		\draw  (4,0)-- (5,0);
		\draw  (4,0)-- (4,1);
		\draw  (4,1)-- (5,1);
		\draw  (5,1)-- (5,0);
		\draw  (5,0)-- (6,0);
		\draw  (5,1)-- (6,1);
		\draw  (6,1)-- (6,0);
		\draw  (5,1)-- (5,2);
		\draw  (5,2)-- (6,2);
		\draw  (6,2)-- (6,1);
		\draw  (6,2)-- (7,2);
		\draw  (6,1)-- (7,1);
		\draw  (7,2)-- (7,1);

		\begin{scriptsize}
			\fill  (3,-1) circle (1.5pt);
			\draw (3.50,-0.57) node {$C_1$};
			\fill  (4,-1) circle (1.5pt);
			\draw (4.50,-0.57) node {$C_2$};
			\fill  (5,-1) circle (1.5pt);
			\draw (4.50, 0.43) node {$C_3$};
			\fill  (5,0) circle (1.5pt);
			\draw (5.50, 0.43) node {$C_4$};
			\fill  (4,0) circle (1.5pt);
			\draw (5.50,1.43) node {$C_5$};
			\fill  (6,0) circle (1.5pt);
			\draw (6.50,1.43) node {$C_6$};
			\fill  (3,0) circle (1.5pt);
			\fill  (4,1) circle (1.5pt);
			\fill  (5,1) circle (1.5pt);
			\fill  (6,1) circle (1.5pt);
			\fill  (5,2) circle (1.5pt);
			\fill  (6,2) circle (1.5pt);
			\fill  (7,2) circle (1.5pt);
			\fill  (7,1) circle (1.5pt);
			
		\end{scriptsize}
	\end{tikzpicture}
	\caption{The stair $S_5$}\label{fig:S5}
\end{figure}

\begin{example}\label{exam: level but initial is not level}
	The stair $S_5$ (see Fig \eqref{fig:S5}) is such that $R/J_\MP$ is level but $R/\ini (J_\MP)$ is not. In fact 
	\[
	\FF(\RR_\MP))=\{\{C_1,C_3,C_5\},\{C_1,C_3,C_6\},\{C_1,C_4,C_6\},\{C_2,C_4,C_6\},\{C_2,C_5\}\}.
	\]
	Hence $\RR_\MP$ is not pure and by Corollary \ref{cor:LevInJ} $R/\ini (J_\MP)$ is not level.
	If $R/J_\MP$ is not level there exists $f\in \soc(R/J_\MP)$ whose degree is strictly less than the rook number of $\MP$. Moreover,  $\ini(f)=\y_F=\y_2\y_5$ (see Lemma \ref{lem:generators-of-inSocle} and Lemma \ref{lem:inSoc}). But $F'=(F\setminus\{C_2\})\cup \{C_1,C_3\}\in \FF(\RR_\MP)$ and by Lemma \ref{lem:FF'}, $\y_F \notin \ini(\soc(R/J_\MP))$. 
\end{example}

\begin{proposition}\label{prop:maximal-ideal-containment}
	Let $\mathcal{P}$ be a path polyomino. If $\reg(S/I_{\MP})=t$, namely $r(\MP)=t$, then $\mathfrak{m}^{t+1}\subseteq J_{\MP}$, where $\mathfrak{m}=(\y_1,\dots,\y_{\ell})\subseteq R$.
\end{proposition}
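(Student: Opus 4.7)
The plan is to reduce the claim to a statement about the top degree of the Artinian ring $R/J_\MP$ via Cohen-Macaulay regularity theory. The containment $\mathfrak{m}^{t+1}\subseteq J_\MP$ is equivalent to saying $(R/J_\MP)_i=0$ for every $i\geq t+1$, and for a standard graded Artinian $\K$-algebra this top degree coincides with its Castelnuovo-Mumford regularity. So the whole task becomes: compute $\reg(R/J_\MP)$ and verify it equals $t$.

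First, I would invoke Proposition \ref{prop:regseq}, which gives that $L=(x_a,\,x_{a_1}-x_{b_1},\,\ldots,\,x_{a_\ell}-x_{b_\ell},\,x_b)$ is a linear system of parameters for $S/I_\MP$. Since $|L|=\ell+2=|V(\MP)|-\rk(\MP)=\dim(S/I_\MP)$ and $S/I_\MP$ is Cohen-Macaulay by Lemma \ref{lem:dim-of-poly}, the linear forms in $L$ form a regular sequence on $S/I_\MP$. In the isomorphism $R/J_\MP\simeq S/(I_\MP+L)$ of Notation \ref{not:R-and-J_P}, we are therefore quotienting a Cohen-Macaulay graded ring by a regular sequence of linear forms.

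Second, I would appeal to the standard fact that Castelnuovo-Mumford regularity is preserved under killing a linear non-zerodivisor. Iterating this along the regular sequence $L$ gives
\[
\reg(R/J_\MP)=\reg\bigl(S/(I_\MP+L)\bigr)=\reg(S/I_\MP)=t.
\]
Because $\dim(R/J_\MP)=0$, the algebra $R/J_\MP$ is Artinian and standard graded, so $\reg(R/J_\MP)$ equals $\max\{i:(R/J_\MP)_i\neq 0\}$. The hypothesis that $\reg(S/I_\MP)=t$ is precisely $r(\MP)=t$ because, by the theorem of Rinaldo--Romeo recalled in Section \ref{sec:prel}, the rook polynomial $r_\MP(t)$ is the $h$-polynomial of the Cohen-Macaulay ring $S/I_\MP$, so its degree, the rook number, equals the regularity.

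Combining, $(R/J_\MP)_i=0$ for all $i\geq t+1$, which means every monomial of degree $t+1$ in $\y_1,\ldots,\y_{r_s}$ lies in $J_\MP$; equivalently $\mathfrak{m}^{t+1}\subseteq J_\MP$. The only nontrivial input is the regularity-preservation step under modding out a linear regular sequence, which is classical and requires no further work.
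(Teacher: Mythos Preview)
Your proof is correct and takes a genuinely different route from the paper's argument. The paper proceeds by an explicit reduction: given a monomial $u=\y_{i_1}\cdots\y_{i_{t+1}}$, it uses Corollary \ref{cor:standard-monomial-R/J_P} to observe that $u$ cannot be a standard monomial (as $t+1>r(\MP)$), so some pair $\y_{i_j}\y_{i_k}$ corresponds to attacking cells; it then repeatedly applies the binomial relations $\y_{i_j}\y_{i_k}-\y_{i_j-1}\y_{i_k+1}\in J_\MP$ to ``push'' indices outward until the monomial is absorbed by the monomial subideals $J_0$ or $J_s$ coming from the end intervals.

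Your argument bypasses this hands-on reduction entirely: you combine the Cohen--Macaulayness of $S/I_\MP$ (so the linear system of parameters $L$ from Proposition \ref{prop:regseq} is a regular sequence) with the invariance of regularity under killing linear non-zerodivisors, and then read off the top degree of the Artinian quotient $R/J_\MP$. This is shorter and requires none of the explicit Gr\"obner-basis mechanics of Section \ref{sec:grobner}; on the other hand, the paper's approach is self-contained at the level of monomial manipulations and makes the combinatorial structure (attacking cells pushing towards the boundary intervals) visible, which fits the overall style of the later arguments on socle elements.
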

\begin{proof}
	Since $\mathcal{P}$ is a simple thin polyomino, by \cite[Corollary 3.16]{RR21}, $\reg(S/I_{\MP})=t=r(\mathcal{P})$, where $r(\mathcal{P})$ is the rook number. Let $u=\y_{i_1}\cdots \y_{i_{t+1}}\in \mathfrak{m}^{t+1}$ be any element. Therefore by Corollary \ref{cor:standard-monomial-R/J_P}, $u$ is not a standard monomial in $R/J_{\MP}$. This implies that there are at least two cells $C_{i_j}$, $C_{i_{k}}$ in the attacking positions with $i_j\leq i_k$. 
	If the two cells belong to the first interval (resp. the last interval) $u\in J_0$ (resp. $u\in J_s$).
	Otherwise, there exists a binomial $\y_{i_j}\y_{i_{k}}-\y_{i_j-1}\y_{i_{k}+1}\in J_{\MP}$, to reduce $u$ in $u_1$.

	 Moreover, since $u_1$ has degree $t+1$, as $u$, there are two cells in attacking position induced by the support of $u_1$, too. Then we apply the same procedure again on $u_1$ to get $u_2$. We continue this process until we get $u_l\in J_{0}$ or $u_l\in J_{s}$ so that $u\in J_{\MP}$. Thus, we have $\mathfrak{m}^{t+1}\subseteq J_{\MP}$.
\end{proof}

\begin{corollary}\label{cor:maxdeg}
Let $\MP$ be a polyomino, let $F$ be a facet of $\RR_\MP$ with $|F|=r(\MP)$. Then $\y_{F} \in \soc(R/J_\MP)$.
\end{corollary}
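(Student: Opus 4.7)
The plan is to verify the two defining conditions for $\y_F$ to lie in $\soc(R/J_{\MP})$: namely that $\y_F$ is nonzero in $R/J_{\MP}$, and that $\y_i \y_F \in J_{\MP}$ for every variable $\y_i$. Both are essentially immediate consequences of results already established in this section.

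First I would verify that $\y_F \neq 0$ in $R/J_{\MP}$. Since $F \in \FF(\RR_{\MP})$, the cells in $F$ are pairwise non-attacking, so by Corollary \ref{cor:standard-monomial-R/J_P} the monomial $\y_F = \prod_{C_i \in F} \y_i$ is a standard monomial of $R/J_{\MP}$. In particular $\y_F \notin \ini_{<}(J_{\MP})$, hence $\y_F \notin J_{\MP}$.

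Next I would invoke Proposition \ref{prop:maximal-ideal-containment}. Since $|F| = r(\MP) = t$, for every $i \in \{1,\ldots,\ell\}$ the monomial $\y_i \y_F$ has degree $t+1$, so $\y_i \y_F \in \mathfrak{m}^{t+1} \subseteq J_{\MP}$. Together with the previous step this shows $\mathfrak{m} \cdot \y_F \subseteq J_{\MP}$ while $\y_F \notin J_{\MP}$, which is exactly the statement that $\y_F \in \soc(R/J_{\MP})$.

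There is no real obstacle here since the two ingredients (standard monomials correspond to non-attacking rook configurations, and $\mathfrak{m}^{t+1} \subseteq J_{\MP}$) have already been set up. The only subtlety worth mentioning explicitly is that in the statement one is implicitly working within the hypothesis of the section, i.e., $\MP$ is a (simple thin) path polyomino, so that Proposition \ref{prop:maximal-ideal-containment} applies; this should be remarked or the hypothesis in the corollary stated as path polyomino for consistency.
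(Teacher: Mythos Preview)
Your proof is correct and is exactly the argument the paper intends: the corollary is stated immediately after Proposition~\ref{prop:maximal-ideal-containment} with no proof, and the deduction is precisely that $\y_F$ is a standard monomial (Corollary~\ref{cor:standard-monomial-R/J_P}) of degree $r(\MP)$ while $\mathfrak{m}^{r(\MP)+1}\subseteq J_\MP$. Your remark that the hypothesis should read ``path polyomino'' for consistency with Proposition~\ref{prop:maximal-ideal-containment} is well taken.
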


\begin{corollary}\label{cor:level-with-regularity2}
	Let $\MP$ be a path polyomino with $\reg(S/I_{\MP})=2$. Then $\soc(R/J_\MP)$ is generated in degree $2$ and $S/I_{\MP}$ is level.
\end{corollary}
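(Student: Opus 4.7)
The plan is to show that $\soc(R/J_\MP)$ is concentrated in degree $2$; both conclusions of the corollary then follow, because for the Artinian quotient $R/J_\MP$ the ring $S/I_\MP$ is level exactly when the socle of $R/J_\MP$ lives in a single degree. Proposition \ref{prop:maximal-ideal-containment} gives $\mathfrak m^3\subseteq J_\MP$, hence $(R/J_\MP)_i=0$ for $i\geq 3$, so $(R/J_\MP)_2\subseteq \soc(R/J_\MP)$, and by Corollary \ref{cor:maxdeg} this top-degree piece is non-zero. Socle in degree $0$ is immediate, so all the work is to rule out a socle element in degree $1$.

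Assume, for contradiction, that $f\in R_1$ represents a non-zero class in $\soc(R/J_\MP)_1$. By Lemma \ref{lem:inSoc}, $\ini(f)\in\soc(R/\ini J_\MP)_1$, which by Lemma \ref{lem:generators-of-inSocle} equals the $K$-span of $\{\y_j:\{C_j\}\in\FF(\RR_\MP)\}$. If $\RR_\MP$ has no facet of size $1$, this span is zero, so $\soc(R/J_\MP)_1=0$ and we are done. Otherwise some cell $C_j$ attacks every other cell of $\MP$; since $\MP$ is thin, $C_j$ lies in at most two maximal cell intervals, and these must exhaust $\MP$. Thus $\MP=I_1\cup I_2$ with $I_1=\{C_1,\dots,C_j\}$, $I_2=\{C_j,\dots,C_\ell\}$, $I_1\cap I_2=\{C_j\}$, and $j\geq 2$, $\ell\geq j+1$. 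Then $\{C_j\}$ is the unique size-$1$ facet, so $\ini(f)=\y_j$, and after rescaling $f=\y_j+\sum_{i=j+1}^{\ell}c_i\y_i$.

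The final step is to reduce $f\y_j$ modulo $J_\MP$ and see that it is non-zero. I plan to use three families of relations in $J_\MP$, each a $2$-minor computed with the labelling of Notation \ref{not:path-poly}: the $2$-minor at the corner cell $C_j$ gives $\y_j^2\equiv \y_{j-1}\y_{j+1}$; for $j<i<\ell$, the $2$-minor of the sub-interval $\{C_j,\dots,C_i\}\subseteq I_2$ gives $\y_j\y_i\equiv \y_{j-1}\y_{i+1}$; and the $2$-minor of the full sub-interval $\{C_j,\dots,C_\ell\}$ gives $\y_j\y_\ell\equiv 0$ (the boundary label $b$ of $\MP$ makes one term vanish under $\phi$). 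Substituting yields
\[
f\y_j\equiv \y_{j-1}\bigl(\y_{j+1}+c_{j+1}\y_{j+2}+\cdots+c_{\ell-1}\y_\ell\bigr)\pmod{J_\MP}.
\]
By Corollary \ref{cor:standard-monomial-R/J_P}, the monomials $\y_{j-1}\y_k$ for $k\in\{j+1,\dots,\ell\}$ are distinct standard monomials of $R/J_\MP$ (each $\{C_{j-1},C_k\}$ being non-attacking, with $C_{j-1}\in I_1\setminus I_2$ and $C_k\in I_2\setminus\{C_j\}$), hence linearly independent. Since the coefficient of $\y_{j-1}\y_{j+1}$ is $1$, $f\y_j\not\equiv 0\pmod{J_\MP}$, contradicting $f\in\soc(R/J_\MP)$. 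The main obstacle in this plan is precisely this last reduction: verifying the three sub-interval $2$-minor identities requires carefully tracking vertex labels through the corner cell $C_j$, where the two maximal intervals meet and change direction.
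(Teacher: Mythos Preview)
Your argument is correct and follows the same line as the paper's framework: use Proposition~\ref{prop:maximal-ideal-containment} to confine the socle to degrees $\leq 2$, use Corollary~\ref{cor:maxdeg} to see that degree~$2$ is nonempty, and then rule out degree~$1$ via Lemmas~\ref{lem:inSoc} and~\ref{lem:generators-of-inSocle}. The paper gives no explicit proof of this corollary, but the intended one-line justification is exactly the combination you found.

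One simplification: your explicit reduction of $f\y_j$ is precisely the special case $F=\{C_j\}$, $k=j$, $F'=\{C_{j-1},C_{j+1}\}$ of Lemma~\ref{lem:FF'}, which is already available at this point in the paper. Once you have established that $\MP=I_1\cup I_2$ with $I_1\cap I_2=\{C_j\}$ and that $\{C_{j-1},C_{j+1}\}\in\FF(\RR_\MP)$, Lemma~\ref{lem:FF'} immediately gives $\y_j\notin\ini_{<}(\soc(R/J_\MP))$, and you are done. Your longer computation is still valuable because it handles the tail terms $\sum_{i>j}c_i\y_i$ transparently, whereas the proof of Lemma~\ref{lem:FF'} in the paper is quite terse on this point; so your version is, if anything, more self-contained.
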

Now we give another class of path polyominoes which are level.
\begin{theorem}\label{thm:interval-length-more-than3}
	Let $\mathcal{P}$ be a path polyomino with $l_1,l_s\geq 2$, and $l_i\geq 3$ for $2\leq i\leq s-1$. Then $S/I_{\MP}$ is level.
\end{theorem}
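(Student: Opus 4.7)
The plan is to deduce levelness of $S/I_\MP$ from the socle of the Artinian reduction $R/J_\MP$. By Proposition \ref{prop:regseq}, the given linear forms are a system of parameters on the Cohen--Macaulay ring $S/I_\MP$ (Lemma \ref{lem:dim-of-poly}), so $S/I_\MP$ is level if and only if $\soc(R/J_\MP)$ is concentrated in a single degree. Under the hypothesis, every interval of $\MP$ has at least one non-shared cell, namely an interior cell when $l_k \geq 3$, or a leaf cell of $\MP$ at the end of the path when $l_1=2$ or $l_s=2$. Placing one rook on such a non-shared cell of each interval gives a non-attacking configuration of size $s$, so $r(\MP)=s$. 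By Proposition \ref{prop:maximal-ideal-containment} the socle is concentrated in degrees $\leq s$, and by Corollary \ref{cor:maxdeg} it contains $\y_F$ for every facet $F$ of size $s$; it therefore suffices to show that $\soc(R/J_\MP)$ has no homogeneous element in degree strictly less than $s$.

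The key combinatorial claim I would prove is the following: for every facet $F$ of $\RR_\MP$ with $|F|<s$, there exists $C_k\in F$ such that $F'=(F\setminus\{C_k\})\cup\{C_{k-1},C_{k+1}\}$ is also a facet of $\RR_\MP$. Granting this, Lemma \ref{lem:FF'} yields $\y_F \notin \ini_{<}(\soc(R/J_\MP))$ for every such $F$. If $\soc(R/J_\MP)$ contained a homogeneous element $f$ of degree $d<s$, then by Lemma \ref{lem:inSoc} its initial monomial would lie in $\soc(R/\ini_{<}(J_\MP))$; by Lemma \ref{lem:generators-of-inSocle} this monomial would equal $\y_F$ for some facet $F$ with $|F|=d<s$, contradicting what was just ruled out. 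Hence $\soc(R/J_\MP)$ is concentrated in degree $s$ and $S/I_\MP$ is level.

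To produce the required $C_k$, I would argue that maximality of $F$ together with the hypothesis forces each interval of $\MP$ to contain exactly one rook of $F$: if some $I_j$ contained no rook, then its non-shared cell would be unattacked and could be added to $F$, contradicting maximality. Since $|F|<s$, pigeonhole then forces some rook of $F$ to cover two intervals simultaneously, i.e., $F$ contains a shared corner cell $C_k \in I_j \cap I_{j+1}$. The hypothesis $l_j,l_{j+1}\geq 2$ with $l_i\geq 3$ for $2\leq i\leq s-1$ guarantees that both $C_{k-1}\in I_j$ and $C_{k+1}\in I_{j+1}$ are non-shared cells, each belonging to a unique interval of $\MP$: either they are interior cells of an interval of length $\geq 3$, or they are leaf cells at an end of the path.

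The main obstacle is the verification that this $F'$ is indeed a facet of $\RR_\MP$. The non-attacking property should follow because $C_{k-1}$ and $C_{k+1}$ lie in distinct intervals $I_j\neq I_{j+1}$ and therefore do not attack each other, while the only cell of $F$ in $I_j\cup I_{j+1}$ was $C_k$, now removed, so no new attack is introduced. For maximality, the observation is that $F$ covered every interval of $\MP$, the removal of $C_k$ drops coverage only of $I_j$ and $I_{j+1}$ (as $C_k$ belonged to no other interval in a thin polyomino), and the new rooks $C_{k-1},C_{k+1}$ restore the coverage of those two intervals. Hence every cell of $\MP$ still lies in some interval containing a rook of $F'$, so $F'$ is maximal. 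This completes the proposed argument.
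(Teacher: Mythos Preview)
Your proposal is correct and follows essentially the same strategy as the paper: reduce to $R/J_\MP$, observe that any facet $F$ with $|F|<r(\MP)$ must contain a non-single (shared) cell $C_k$, use the length hypotheses to see that $C_{k-1}$ and $C_{k+1}$ are single, and apply Lemma~\ref{lem:FF'}. Your write-up is somewhat more detailed than the paper's---you explicitly argue $r(\MP)=s$, run the pigeonhole step, and verify that $F'$ is a facet---but the underlying argument is the same.
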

\begin{proof}
We prove that $S/I_{\MP}$ is level, by proving that  $\soc(R/J_{\MP})$ is generated in degree $r(\MP)$. From Lemma \ref{lem:inSoc} and Corollary \ref{cor:maxdeg} , we have 
\[
\ini (\soc(R/J_\MP)) \subset \soc(R/\ini(J_\MP)),
\]
and for any $F\in \FF(\RR_\MP)$ with $|F|=r(\MP)$, $\y_F\in \ini(\soc(J_\MP))$. Hence it is sufficient to show that for any $F \in \FF(\RR_\MP)$ with $|F|< r(\MP)$ it holds $\y_{F} \notin \ini(\soc(R/J_\MP))$. Since $|F|< r(\MP)$ , then $F$ contains a non-single cell $C_k$. Since $l_k>2$, $C_{k-1}$ and $C_{k+1}$ are single cells, hence 
\[
F'=(F \setminus \{C_{k}\}) \cup \{C_{k-1},C_{k+1}\} \in \FF(\RR_\MP)
\]
and from Lemma \ref{lem:FF'} the claim follows.
\end{proof}

Now we need some technical results to classify the levelness of $S/I_{\MP}$.

\begin{lemma}\label{lem:Ckh}
Let $\MP$ be a path polyomino. Let $F \in \FF(\RR_\MP)$ be such that there exists even $h\geq 2$, $k$, and $\{C_{k},C_{k+2},C_{k+4}\ldots, C_{k+h-2}, C_{k+h}\} \subset F$ (that is $C_k, \ldots, C_{k+h}$ lie on a stair).
\begin{enumerate}
\item Assume that $F'=(F \setminus \{C_{k},C_{k+h}\}) \cup \{C_{k-1},C_{k+h+1}\} \in \RR_{\MP}$. Then for any $j \in \{k,k+1,\ldots, k+h \}$
\[
\y_j\y_{F}=y_j\y_{F'} \mod J_\MP.
\]
\item Assume that $F''=(F \setminus \{C_{k}\}) \cup \{C_{k-1}\}\in \RR_{\MP}$. Then it follows
\[
\y_{k+h+1}\y_{F} = \y_{k+h+2}\y_{F''} \mod J_\PP
 \]
\item Assume that $F'''=(F \setminus \{C_{k+h}\}) \cup \{C_{k+h+1}\} \in \RR_{\MP}$.Then it follows
\[
\y_{k-1}\y_{F} = \y_{k-2}\y_{F'''} \mod J_\PP
\]

\end{enumerate}

\end{lemma}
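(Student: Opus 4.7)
The plan is to reduce each of the three claims to a stair-local monomial identity in $R/J_\MP$ and then to verify that identity by iterated applications of two families of relations that the stair provides. For every cell $C_i$ with $k\le i\le k+h$ the image under $\phi$ of the single-cell $2$-minor of $C_i$ gives
\[
\y_i^2-\y_{i-1}\y_{i+1}\in J_\MP,
\]
and for every pair of consecutive stair cells $C_i,C_{i+1}$, which lie in a common $2$-cell interval and therefore attack one another, the reduction exhibited in the proof of Corollary~\ref{cor:standard-monomial-R/J_P} yields
\[
\y_i\y_{i+1}-\y_{i-1}\y_{i+2}\in J_\MP.
\]
These two rewrite rules will be all that is needed.

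For Part~(1), I would first write $F=F_0\sqcup\{C_{k+2m}:0\le m\le h/2\}$ with $F_0=F\setminus\{C_k,C_{k+2},\ldots,C_{k+h}\}$, so that
\[
\y_F-\y_{F'}=\y_{F_0}\,\y_{k+2}\y_{k+4}\cdots\y_{k+h-2}\,(\y_k\y_{k+h}-\y_{k-1}\y_{k+h+1}).
\]
It then suffices to prove, for each $j\in\{k,\ldots,k+h\}$, the stair identity
\[
\y_j\,\y_k\y_{k+2}\cdots\y_{k+h}\;\equiv\;\y_j\,\y_{k-1}\y_{k+2}\cdots\y_{k+h-2}\y_{k+h+1}\pmod{J_\MP}.
\]
For the boundary case $j=k$ I would start from $\y_k^2\y_{k+2}\cdots\y_{k+h}$, apply $\y_k^2\to\y_{k-1}\y_{k+1}$ once, and then iteratively apply the pair reduction $\y_{k+2m+1}\y_{k+2m+2}\to\y_{k+2m}\y_{k+2m+3}$ for $m=0,1,\ldots,h/2-1$; each step moves a ``travelling pair'' two positions to the right, and after $h/2$ iterations one lands on $\y_{k-1}\y_k\y_{k+2}\cdots\y_{k+h-2}\y_{k+h+1}$, which is $\y_k$ times the right-hand side. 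The case $j=k+h$ is the mirror argument, starting from $\y_{k+h}^2\to\y_{k+h-1}\y_{k+h+1}$ and sliding leftward. For an interior $j$, I would first create a travelling pair involving $\y_j$, using either $\y_j^2\to\y_{j-1}\y_{j+1}$ when $j-k$ is even, or a pair reduction involving $\y_j$ and an adjacent variable when $j-k$ is odd, and then slide the two resulting pieces in opposite directions toward the ends of the stair.

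For Part~(2), an analogous factoring gives
\[
\y_{k+h+1}\y_F-\y_{k+h+2}\y_{F''}=\y_{F_0}\,\y_{k+2}\y_{k+4}\cdots\y_{k+h}\,(\y_k\y_{k+h+1}-\y_{k-1}\y_{k+h+2}),
\]
reducing the claim to
\[
\y_k\y_{k+2}\cdots\y_{k+h}\y_{k+h+1}\;\equiv\;\y_{k-1}\y_{k+2}\cdots\y_{k+h}\y_{k+h+2}\pmod{J_\MP}.
\]
This is verified by a single chain of $h/2+1$ rewrites: apply $\y_{k+h}\y_{k+h+1}\to\y_{k+h-1}\y_{k+h+2}$ first, then $\y_{k+h-2}\y_{k+h-1}\to\y_{k+h-3}\y_{k+h}$, and so on, pushing the travelling pair two positions to the left at each step; the final rewrite $\y_k\y_{k+1}\to\y_{k-1}\y_{k+2}$ delivers the desired monomial. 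Part~(3) is the mirror image of Part~(2), obtained by sliding a pair from the left end of the stair to the right.

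The main technical obstacle will be the interior case of Part~(1): after creating a square or attacking pair involving $\y_j$, one must slide two pieces simultaneously in opposite directions and check that both sides of the stair identity reduce to the same standard monomial. Because the preceding section establishes that $\MM$ is a quadratic Gr\"obner basis, the reduction of monomials modulo $J_\MP$ is confluent and the standard monomials are described by Corollary~\ref{cor:standard-monomial-R/J_P}, so it suffices to exhibit any valid rewrite chain from each side to a common standard monomial; this is most cleanly organised as an induction on $h$ with the boundary cases $j=k$ and $j=k+h$ serving as the base.
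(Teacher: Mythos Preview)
Your proposal is correct and follows essentially the same approach as the paper: both arguments rest entirely on the two stair relations $\y_i^2\equiv\y_{i-1}\y_{i+1}$ and $\y_i\y_{i+1}\equiv\y_{i-1}\y_{i+2}$ in $R/J_\MP$, and both reduce to the local stair identity after factoring out $\y_{F_0}$. The only organisational difference is that the paper runs a formal induction on $h$ (peeling off $C_{k+h}$, invoking the inductive hypothesis for $j\le k+h-2$, and treating $j=k+h-1,k+h$ by hand), whereas you describe the same computation as an explicit ``sliding'' of travelling pairs and only invoke induction at the end to clean up the interior case of Part~(1); the paper's induction is a tidier way to handle exactly that case, while your direct chain of rewrites makes Parts~(2) and (3) slightly more transparent than the paper's inductive version.
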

\begin{proof}
Without loss of generality, we assume that $F=\{C_{k},C_{k+2},\ldots, C_{k+h}\}$.\\
$(1).$ We proceed by induction on $h$. Moreover, to simplify the notation,  all the equalities are equalities modulo $J_\MP$.\\
Let $h=2$, then $F=\{C_{k},C_{k+2}\}$. We prove that $\y_j\y_{k}\y_{k+2}=\y_{j}\y_{k-1}\y_{k+3}$  for $j=k,k+1,k+2$.
\begin{itemize}
\item If $j=k$, then $\y_{k}^2\y_{k+2}= \y_{k-1}\y_{k+1}\y_{k+2} =\y_{k-1}\y_{k}\y_{k+3}$.
\item If $j=k+1$, then $\y_{k}\y_{k+1}\y_{k+2}=\y_{k-1}\y_{k+2}^2=\y_{k-1}\y_{k+1}\y_{k+3}$.
\item If $j=k+2$, then $\y_{k}\y_{k+2}^2= \y_{k}\y_{k+1}\y_{k+3}= \y_{k-1}\y_{k+2}\y_{k+3}$.
\end{itemize}
Hence the assertion follows.
We assume that $h > 2$ and that the thesis holds true for $h-2$, that is the thesis holds true for any facet of $h/2$  cells of the form $\{D_{t},D_{t+2}\ldots, D_{t+h-2} \}$. 
We consider 
\[
F=\{C_{k},C_{k+2},\ldots,C_{k+h-2}, C_{k+h}\}, \ \ F'=\{C_{k-1},C_{k+2},\ldots C_{k+h-2}, C_{k+h+1}\}.
\]
We prove that for any $j \in \{k,\ldots, k+h\}$, $\y_j \y_F=\y_j \y_{F'}$ modulo $J_\MP$. We consider 
\[
F_1=F\setminus \{C_{k+h}\},\text{ and }  F_1'=(F_1 \setminus \{C_{k},C_{k+h-2}\}) \cup \{C_{k-1},C_{k+h-1}\},
\]
and $F_1$, $F_1'$ are faces because $F$ and $F'$ are faces by hypothesis. 
By inductive hypothesis we have that for any $j \in \{k,\ldots, k+h-2\}$, $\y_j \y_{F_1}=\y_{j}\y_{F'_1}$  and hence 
\[
\y_j\y_F=\y_j\y_{F_1}\y_{k+h}=\y_j\y_{F_1'}\y_{k+h}=\y_j\frac{\y_{F_1'}}{\y_{k+h-1}} \y_{k+h-1} \y_{k+h} =
\]
\[
=\y_{j}\y_{k-1}\y_{k+2} \cdots \y_{k+h-4} \y_{k+h-2} \y_{k+h+1} =\y_j\y_{F'}.
\] 

We are left with the cases $j \in \{k+h-1,k+h\}$. If $j=k+h-1$, then $\y_{k+h-1}\y_{F}$ is equal to
\[
\y_{k}\y_{k+2}\cdots \y_{k+h-2} \y_{k+h-1}\y_{k+h} =\y_{k}\y_{k+2}\cdots \y_{k+h-2}^2 \y_{k+h+1} = (\y_{F_1}  \y_{k+h-2})\y_{k+h+1}.
\]
We apply the inductive hypothesis and we get that $\y_{k+h-2} \y_{F_1}=\y_{k+h-2}\y_{F'_1}$,  that is
\[
  (\y_{k+h-2} \y_{F_1} )\y_{k+h+1}=\y_{k+h-2} \y_{k-1}\y_{k+2}\cdots \y_{k+h-4} \y_{k+h-1} \y_{k+h+1} = \y_{k+h-1} \y_{F'}.
\]
If $j=k+h$, then $\y_{k+h}\y_{F}$ is equal to
\[
\y_{k}\y_{k+2}\cdots \y_{k+h-2} \y_{k+h}^2= \y_{k}\y_{k+2}\cdots \y_{k+h-2} \y_{k+h-1} \y_{k+h+1} = (\y_{F_1}  \y_{k+h-1})\y_{k+h+1}.
\]
We apply the inductive hypothesis and we get that $\y_{k+h-1} \y_{F_1}=\y_{k+h-1}\y_{F'_1}$, that is  
\[
 ( \y_{k+h-1} \y_{F_1} )\y_{k+h+1}=\y_{k+h-1} \y_{k-1}\y_{k+2}\cdots \y_{k+h-4} \y_{k+h-1} \y_{k+h+1} =
\]
\[
 \y_{k-1}\y_{k+2}\cdots \y_{k+h-4} \y_{k+h-1}^2 \y_{k+h+1} =\y_{k-1}\y_{k+2} \cdots \y_{k+h-4} \y_{k+h-2} \y_{k+h} \y_{k+h+1}=\y_{k+h} \y_{F'}.
\]
Since $\y_j \y_{F}=\y_{j}\y_{F'}$ modulo $J_\MP$, the assertion follows.

$(2)$. We proceed by induction on $h$. If $h=2$, then $F=\{C_{k},C_{k+2}\}$. We prove that $\y_{k+3} \y_{k}\y_{k+2}=\y_{k-1}\y_{k+2}\y_{k+4} $. We have,
\[
\y_{k}\y_{k+2}\y_{k+3} =\y_{k}\y_{k+1}\y_{k+4} =\y_{k-1}\y_{k+2}\y_{k+4}.
\]
We assume that $h > 2$ and that the thesis holds true for any facet of $h/2$  cells of the form $\{D_{t},D_{t+2}\ldots, D_{t+h-2} \}$. We consider 
\[
F=\{C_{k},C_{k+2},\ldots, C_{k+h}\}, \ \ F''=\{C_{k-1},C_{k+2},\ldots C_{k+h}\}.
\]
We prove that $\y_{k+h+1}\y_F=\y_{k+h+2}\y_{F'}$. We have 
\[
\y_{k}\y_{k+2}\cdots \y_{k+h-2} \y_{k+h}\y_{k+h+1} =\y_{k}\y_{k+2}\cdots \y_{k+h-2} \y_{k+h-1}\y_{k+h+2}
\]
by applying the inductive hypothesis on $\{C_{k},C_{k+2},\ldots, C_{k+h-2}\}$, we get 
\[
\y_{k}\y_{k+2}\cdots \y_{k+h-2} \y_{k+h-1}\y_{k+h+2} =\y_{k-1}\y_{k+2}\cdots \y_{k+h-2} \y_{k+h}\y_{k+h+2} = \y_{k+h+2} \y_{F''},
\]
as desired.\\
$(3)$ Similarly to $(2)$.
\end{proof}

\begin{corollary}\label{cor:3step}
Let $\MP$ be a path polyomino. Let $F \in \FF(\RR_\MP)$ be such that there exists $h\geq 2$, $\ell \geq 2$, $k$,  and $\{C_{k},C_{k+2},\ldots, C_{k+h}, C_{k+h+3},C_{k+h+5},\ldots, C_{k+h+\ell+3}\} \subset F$ with $C_k,\ldots, C_{k+h+\ell+3}$ lying on a stair and such that $F'=(F \setminus \{C_{k},C_{k+h+\ell+3}\}) \cup \{C_{k-1},C_{k+h+\ell+4}\} \in \RR_{\MP}$. Then for $j \in \{k+h+1,k+h+2\}$
\[
\y_j(\y_{F}-\y_{F'})\in J_\MP.
\]
\end{corollary}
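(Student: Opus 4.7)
The plan is to reduce Corollary \ref{cor:3step} to two successive applications of Lemma \ref{lem:Ckh}, one on each of the two rook blocks that make up the stair configuration inside $F$. I introduce the two one-sided shifts
\[
F'' = (F \setminus \{C_k\}) \cup \{C_{k-1}\}, \qquad F''' = (F \setminus \{C_{k+h+l+3}\}) \cup \{C_{k+h+l+4}\},
\]
which move only the first or only the last cell of the stair configuration. A direct check gives $F' = (F'' \setminus \{C_{k+h+l+3}\}) \cup \{C_{k+h+l+4}\} = (F''' \setminus \{C_k\}) \cup \{C_{k-1}\}$, so $F'$ is reached from $F$ by performing the two shifts in either order.

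For $j = k+h+1$, I first apply Lemma \ref{lem:Ckh}(2) to $F$ with the first stair block $\{C_k, C_{k+2}, \ldots, C_{k+h}\}$, yielding $\y_{k+h+1}\y_F \equiv \y_{k+h+2}\y_{F''} \pmod{J_\MP}$. Next, I apply Lemma \ref{lem:Ckh}(3) to $F''$ with the second stair block $\{C_{k+h+3}, C_{k+h+5}, \ldots, C_{k+h+l+3}\}$; after the index translation (the lemma's ``$k$'' is our $k+h+3$, so its ``$k-1$'' and ``$k-2$'' become $k+h+2$ and $k+h+1$) this yields $\y_{k+h+2}\y_{F''} \equiv \y_{k+h+1}\y_{F'} \pmod{J_\MP}$. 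Chaining the two congruences delivers $\y_{k+h+1}(\y_F - \y_{F'}) \in J_\MP$. For $j = k+h+2$, I invoke the two parts of Lemma \ref{lem:Ckh} in the reverse order, passing through $F'''$ instead of $F''$; the analogous chain then gives $\y_{k+h+2}(\y_F - \y_{F'}) \in J_\MP$.

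The only point requiring care is that Lemma \ref{lem:Ckh} requires the intermediate configurations $F''$ and $F'''$ to belong to $\RR_\MP$. Both are easy to verify: since $F' \in \RR_\MP$, the cell $C_{k-1}$ does not attack any cell of $F' \setminus \{C_{k+h+l+4}\} = F'' \setminus \{C_{k+h+l+3}\}$, and it does not attack $C_{k+h+l+3}$ either, because on the stair the only attacks happen inside each length-$2$ cell interval and $C_{k-1}$ and $C_{k+h+l+3}$ lie in distinct intervals. The symmetric argument handles $F''' \in \RR_\MP$. After these quick verifications the entire argument is a mechanical chain of two congruences, with no further obstacle.
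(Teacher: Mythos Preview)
Your proof is correct and follows essentially the same route as the paper's: for $j=k+h+1$ you apply Lemma~\ref{lem:Ckh}(2) to the first block $\{C_k,\ldots,C_{k+h}\}$ to pass to $\y_{k+h+2}\y_{F''}$, then Lemma~\ref{lem:Ckh}(3) to the second block $\{C_{k+h+3},\ldots,C_{k+h+l+3}\}$ to reach $\y_{k+h+1}\y_{F'}$, and swap the order for $j=k+h+2$. Your extra verification that $F'',F'''\in\RR_\MP$ is a nice touch (the paper leaves this implicit), though note that the proof of Lemma~\ref{lem:Ckh} never actually uses the facet hypothesis on its input, so the second application to $F''$ is unproblematic even without checking that $F''$ is maximal.
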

\begin{proof}
 We assume that the equalities are modulo $J_\PP$. We prove that for $j \in \{k+h+1,k+h+2\}$, $\y_j\y_F=\y_j\y_{F'}$. Let $j=k+h+1$, we obtain that $\y_j\y_F$ is 
\[
\y_{k}\y_{k+2}\cdots \y_{k+h} (\y_{k+h+1}) \y_{k+h+3}\y_{k+h+5} \cdots \y_{k+h+l+3}.
\]
We apply Lemma \ref{lem:Ckh}.(2) to get 
\[
\y_{k}\y_{k+2}\cdots \y_{k+h} \y_{k+h+1} = \y_{k-1}\y_{k+2}\cdots \y_{k+h} \y_{k+h+2},
\]
and Lemma \ref{lem:Ckh}.(3) to get 
\[
 \y_{k+h+2} \y_{k+h+3}\y_{k+h+5} \cdots \y_{k+h+l+3}=  \y_{k+h+1} \y_{k+h+3}\y_{k+h+5} \cdots \y_{k+h+l+4}.
\]
Hence the thesis follows. The case $j=k+h+2$ similarly follows by applying Lemma \ref{lem:Ckh}.(3) and Lemma \ref{lem:Ckh}.(2).
\end{proof}

\begin{proposition}\label{prop:stairs-not-level}
Let $\MP= S_{\lambda}$ such that $\lambda=4,6$ or $\lambda\geq 8$.
Then $S/I_\MP$ is not level.
\end{proposition}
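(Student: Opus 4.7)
The plan is to exhibit, for each admissible $\ell$, a non-zero element $u \in \soc(R/J_\MP)$ of degree strictly smaller than $r(\MP)$. Since $R/J_\MP$ is the Artinian reduction of the Cohen-Macaulay ring $S/I_\MP$ by a linear system of parameters (Proposition \ref{prop:regseq} and Lemma \ref{lem:dim-of-poly}), the existence of such a $u$ forces $S/I_\MP$ to be non-level.

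For the even case ($\ell = 4, 6$ and all even $\ell \geq 8$), take
\[
F = \{C_2, C_4, \ldots, C_\ell\}, \qquad F' = \{C_1, C_4, C_6, \ldots, C_{\ell-2}, C_{\ell+1}\},
\]
which are facets of $\RR_\MP$ of common cardinality $\ell/2 < \ell/2 + 1 = r(\MP)$. I claim $u:=\y_F - \y_{F'}$ lies in $\soc(R/J_\MP)$. Non-vanishing is immediate from Corollary \ref{cor:standard-monomial-R/J_P}: $\y_F$ and $\y_{F'}$ are two distinct standard monomials. For $j \in \{2, 3, \ldots, \ell\}$, the key input is Lemma \ref{lem:Ckh}(1) with $k=2$ and $h=\ell-2$, which gives directly $\y_j(\y_F-\y_{F'}) \in J_\MP$. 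For $j=1$, note that reducing modulo $L$ the inner $2$-minor of the single cell $C_1$ yields $\y_1^2 \in J_\MP$, and reducing the $2$-minor of the length-$2$ first interval $I_1$ yields $\y_1\y_2 \in J_\MP$; thus $\y_1 \y_F = \y_1\y_2\y_4\cdots \y_\ell \equiv 0$ and $\y_1 \y_{F'} = \y_1^2\y_4\cdots\y_{\ell+1} \equiv 0$ in $R/J_\MP$. The case $j=\ell+1$ is symmetric, using $\y_\ell\y_{\ell+1}, \y_{\ell+1}^2 \in J_\MP$.

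For the odd case $\ell \geq 9$, a single difference $\y_F-\y_{F'}$ is no longer enough. For the base case $\ell=9$, I propose the product
\[
u = (\y_2\y_4 - \y_1\y_5)(\y_{\ell-2}\y_\ell - \y_{\ell-3}\y_{\ell+1}),
\]
which has degree $4 < r(\MP) = 5$. Its two factors play the role of the $S_4$ socle element at the two ends of $S_\ell$; the saving grace is the middle-interval relation $\y_5\y_6 \equiv \y_4\y_7 \pmod{J_\MP}$, which makes the reductions of $\y_j u$ collapse to zero for all $j \in \{1, \ldots, 10\}$, as one checks by routine application of $\y_i\y_{i+1} \equiv \y_{i-1}\y_{i+2}$ and $\y_i^2 \equiv \y_{i-1}\y_{i+1}$. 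Non-vanishing is verified by expanding $u$ into the four distinct standard monomials $\y_2\y_4\y_7\y_9$, $\y_2\y_4\y_6\y_{10}$, $\y_1\y_5\y_7\y_9$, $\y_1\y_4\y_7\y_{10}$.

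The main obstacle is the odd case $\ell \geq 11$: the two end-copies of $S_4$ are now separated by a non-trivial middle, and the naive product above no longer annihilates every $\y_j$ because the action of $\y_j$ for $j$ in the gap produces uncancelled standard monomials. The remedy is to augment $u$ with correction terms built using Corollary \ref{cor:3step}, which furnishes $\y_j$-annihilation relations $\y_j(\y_G - \y_{G'}) \in J_\MP$ for facets $G$ supported on two alternating stair-runs separated by a gap of $3$. The rigorous execution amounts to matching, for each middle index $j$, the standard monomials produced by $\y_j$ acting on the leading term against those produced on the correction terms; this is the technical heart of the proof and typically proceeds via a careful combinatorial bookkeeping together with an induction on $\ell$.
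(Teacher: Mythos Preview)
Your even case ($\ell=4,6$ and even $\ell\geq 8$) is complete and identical to the paper's argument: same facets $F,F'$, same invocation of Lemma~\ref{lem:Ckh}(1) for $j\in\{2,\ldots,\ell\}$, same boundary argument via $\y_1^2,\y_1\y_2,\y_\ell\y_{\ell+1},\y_{\ell+1}^2\in J_\MP$.

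For $\ell=9$ your factored element $u=(\y_2\y_4-\y_1\y_5)(\y_7\y_9-\y_6\y_{10})$ is, after reducing the cross-term $\y_1\y_5\y_6\y_{10}$ via $\y_5\y_6\equiv\y_4\y_7$, exactly the paper's element $\y_F-\y_{F'}-\y_{F''}+\y_{F'''}$ with $F=\{C_2,C_4,C_7,C_9\}$, $F'=\{C_1,C_5,C_7,C_9\}$, $F''=\{C_2,C_4,C_6,C_{10}\}$, $F'''=\{C_1,C_4,C_7,C_{10}\}$. The factored presentation is a pleasant repackaging: each $\y_j$ with $j\leq 5$ annihilates the left factor and each $\y_j$ with $j\geq 6$ annihilates the right, so the verification is shorter than the paper's case analysis.

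The genuine gap is odd $\ell\geq 11$. You correctly diagnose that the two-factor product no longer lies in the socle (for $\ell=11$ one checks directly that $\y_6 u$ reduces to a non-trivial combination of distinct standard monomials), and you correctly point to Corollary~\ref{cor:3step} as relevant for indices in the middle. But you stop at ``careful combinatorial bookkeeping together with an induction on $\ell$'', and that is not a proof. The paper does \emph{not} proceed by induction on $\ell$; it writes down, for each parity of $m=(\ell+1)/2$, four explicit facets $F,F',F'',F'''$ of cardinality $m-1$ and verifies $\y_j(\y_F-\y_{F'}-\y_{F''}+\y_{F'''})\in J_\MP$ for every $j$ by a case analysis built on Lemma~\ref{lem:Ckh} and Corollary~\ref{cor:3step}. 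The construction of $F''$ and $F'''$ --- alternating even-indexed configurations with a single deliberate ``hole'' at position $2(m-t)$, where $m-3=2t$ (resp.\ $m-4=2t$) for $m$ odd (resp.\ even) --- is precisely the missing idea in your sketch. Without an explicit candidate element for $\ell\geq 11$ and its verification, this part of the proposal is incomplete.
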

\begin{proof}
If $\MP$ is the stair $S_\lambda$, then $|\MP|=\lambda+1$.\\
If $\lambda$ is even and $\lambda\geq 4$, then $\lambda=2k$ and $r(\MP)=k+1$. We consider $F=\{C_{2}, C_{4} , \ldots, C_{\lambda} \}\in \FF(\RR_{\MP})$ and has cardinality $k$. Let $F'=\{C_{1},C_{4},C_{6}, \ldots ,C_{\lambda-2}, C_{\lambda+1}\}$. We prove that $\y_{F}-\y_{F'} \in \soc (R/J_\MP)$, namely that for any $j \in \{1,\ldots, \lambda+1\}$ 
\[
\y_{j}(\y_{F}-\y_{F'}) \in J_\MP.
\]
For $j=1,\lambda+1$, it easily follows since $\y_1^2,\y_1\y_2\in J_0\subseteq J_\MP$ and $\y_{\lambda}\y_{\lambda+1},\y_{\lambda+1}^2\in J_{\lambda}\subseteq J_\MP$, while for $j \in \{2,3,\ldots,\lambda\}$ this is the case of Lemma \ref{lem:Ckh}.(1). Hence, the first part of the assertion follows.\\

If $\lambda$ is odd and $\lambda\geq 9$, then $\lambda=2m-1$ and $r(\MP)=m$. We first deal with the case $m$ odd.
We consider 
\[
F=\{C_2,C_4\} \cup \{C_{2k-1}: 4\leq k \leq m \} \mbox{ and } F'=\{C_1,C_5\} \cup  \{C_{2k-1}: 4\leq k  \leq m \} 
\]
Since $m$ is odd, then $m-3=|\{C_{2k-1}: 4\leq k \leq m \}|$ is even and $m-3=2t$ for some $t$. We further consider
\[
F''=\{C_{2k} : k \in \{1,2, \ldots, m-t-1,m-t+1, \ldots,m\}\}
\]
\[
F'''=\{C_1\} \cup \{C_{2k} : k \in \{2,\ldots, m-t-2\}\} \cup\{C_{2(m-t)-1}\} \cup \{C_{2k} : k \in \{m-t+1,\ldots, m\}\}
\]
We claim that for any $j\in \{1,\ldots, \lambda+1\}$ we have 
\[
\y_{j}(\y_{F}-\y_{F'}-\y_{F''}+\y_{F'''}) \in J_\MP.
\]
From now on we deal with the cases $j=2h-1$ for $h=1,\ldots, m$. The case $j=2h$ equivalently follows.

\noindent \textbf{Claim:} Let $G$ be the set 
\[
G=\{C_{2k-1}: 4\leq k \leq m \}.
\]
For $j=2h-1$ with $4 \leq h \leq m$, we claim that 
\begin{equation}\label{eq:Gtilde}
\y_{2h-1}\y_G = \y_{2h-1} \y_{H}  \mod J_\PP,
\end{equation}
where, given 
\[
G_h=\{C_{2k}: 3 \leq k \leq h-1 \}\cup \{C_{2k-1}: h+1\leq k \leq m-h+3 \} \cup \{C_{2k}: m-h+4 \leq k \leq m \},
\]
and
\[
H=\begin{cases}G_h &\mbox{if } h \in \{4,\ldots ,  \frac{m-3}{2}+3\} \\ G_{m-h+4} &\mbox{if } h \in \{ \frac{m-3}{2}+4, \ldots, m\} \end{cases}
\]
\textit{Proof of the claim:} If $4 \leq h \leq m$, then from Lemma \ref{lem:Ckh}.(1) we have 
\[
\y_{2h-1}\y_G = \y_{2h-1}\y_{G'}  \mod J_\PP
\] 
where  $G'=\{ C_6 \} \cup \{C_{2k-1}: 5\leq k \leq m-1 \} \cup \{C_{2m}\}=G_4$. Moreover, if $5\leq h \leq m-1$, then we can do the same argument for $\{C_{2k-1}: 5\leq k \leq m-1 \}$. We can repeat the same argument until we get
\begin{equation}\label{eq:Gtilde}
\y_{2h-1}\y_G = \y_{2h-1} \y_{G_h}  \mod J_\PP,
\end{equation}
where 
\[
G_h=\{C_{2k}: 3 \leq k \leq h-1 \}\cup \{C_{2k-1}: h+1\leq k \leq m-h+3 \} \cup \{C_{2k}: m-h+4 \leq k \leq m \}.
\]

This set makes sense for $h \in \{4,\ldots ,  \frac{m-3}{2}+3=t+3\}$. If $h \in \{t+4, \ldots, m \}$, then $h'=m-h+4 \in \{4,\ldots, t+3 \} $, and 
\[
\y_{2h-1}\y_G = \y_{2h-1} \y_{G_{h'}} \mod J_\PP.
\]
This proves the claim.

We want to compute $\y_{j}\y_{F}$, $\y_{j}\y_{F'}$, $\y_{j}\y_{F''}$, $\y_{j}\y_{F'''}$ for $j=1,\ldots , m$. Given $j \in \{2h-1,2h\}$, for $4 \leq h \leq m$ we use Equation \eqref{eq:Gtilde} to get
\[
\y_{j}\y_{F}=\y_j\y_{2}\y_{4}\y_G = \y_{j} \y_{2}\y_{4} \y_{G_h}
\]
\[
\y_{j}\y_{F'}=\y_j\y_{1}\y_{5}\y_G = \y_{j} \y_{1}\y_{5} \y_{G_h}.
\]

We divide into three cases:
\begin{enumerate}
\item $h < t+3$;
\item $h= t+3$;
\item $h> t+3$.
\end{enumerate}

(1). We have that for any $h= \{2,3 \ldots ,m-t-1=t+2\}$, from Lemma \ref{lem:Ckh}.(1) applied to the set $\{C_{2k}:k\in \{1,2,\dots,m-t-1\}\}$, we have $\y_{j}(\y_{F''}-\y_{F'''})\in J_\MP$.
Hence we only should control that we have $\y_{j}(\y_{F}-\y_{F'})\in J_\MP$. 
Moreover from Lemma \ref{lem:Ckh} and Corollary \ref{cor:3step} for $j=2,\ldots, 6$, $\y_{j}(\y_{F}-\y_{F'})\in J_\MP$.
Since  
\[
h < t+3 \ \  \Rightarrow \ \  2h < m+3  \ \ \Rightarrow \ \ h< m-h+3 \ \ \Rightarrow  \ \ h+1 \leq m-h+3,
\]
then $\{C_{2k-1}: h+1\leq k \leq m-h+3 \}\neq \varnothing$.
Then $C_{2h-2}$ and $C_{2h+1}$ have a $3$ step difference, that is we are in the hypotheses of Corollary \ref{cor:3step}, and from Corollary \ref{cor:3step}, we get 
\[
 \y_{j} \y_{2}\y_{4} \y_{G_h} = \y_j \y_{1} \y_{4} \frac{\y_{G_h}}{\y_{2(m-h+3)-1}} \y_{2(m-h+3)}.
\]
On the other hand, we compute 
\[
\y_{j}\y_{F'}=\y_j \y_{1}\y_{5}\y_G = \y_{j} \y_{1}\y_{5} \y_{G_h}
\]
since $\Big(\prod\limits_{k=3}^{h-1} \y_{2k} \Big)|  \y_{G_h}$, then we apply Lemma \ref{lem:Ckh}.(3) to $\y_5\Big(\prod\limits_{k=3}^{h-1} \y_{2k}\Big) = \y_{4}\Big(\prod\limits_{k=3}^{h-2} \y_{2k}\Big) \y_{2h-1} $ and hence
\[
\y_{j} \y_{1}\y_{5} \y_{G_h} = \y_{j} \y_{1}\y_{4} \frac{\y_{G_h}}{\y_{2h-2}} \y_{2h-1}.
\]
Since $\Big(\prod\limits_{k=h+1}^{m-h+3} \y_{2k-1}\Big) | \y_{G_h}$, then $\Big(\prod\limits_{k=h}^{m-h+3} \y_{2k-1} \Big) | \y_{2h-1}\y_{G_h}$ and since $j \in \{2h-1,2h\}$, we apply Lemma \ref{lem:Ckh}.(1) to get 
\[
\y_{j}\Big(\prod\limits_{k=h}^{m-h+3} \y_{2k-1} \Big) = \y_{j} \y_{2h-2}\Big( \prod\limits_{k=h+1}^{m-h+2} \y_{2k-1} \Big) \y_{2(m-h+3)}
\]
hence
\[
\y_{j} \y_{1}\y_{4} \frac{\y_{G_h}}{\y_{2h-2}} \y_{2h-1} = \y_{j} \y_{1}\y_{4} \frac{\y_{G_h}}{\y_{2h-2} \y_{2(m-h+3)-1}} \y_{2h-2}\y_{2(m-h+3)} =\y_{j} \y_{1}\y_{4} \frac{\y_{G_h}}{ \y_{2(m-h+3)-1}}\y_{2(m-h+3)}
\]
hence $\y_j(\y_{F}- \y_{F'}) = 0$. \\
(2) In the case $h=t+3$, we have $m-h+4=m-t+1=h+1$ and hence $h-1=m-t-1$ and $\{C_{2k-1}: h+1\leq k \leq m-h+3 \}= \varnothing$. Therefore,
\[
G_{m-t}=\{C_{2k}: 3 \leq k \leq m-t-1 \} \cup \{C_{2k}: m-t+1 \leq k \leq m \}
\]
and
\[
\y_j \y_F = \y_j \y_2 \y_4 \y_{G_{m-t}} = \y_j \y_{F''}
\]
Similarly, from Lemma \ref{lem:Ckh}.(3)  we have $\y_5 \prod\limits_{k=3}^{m-t-1} \y_{2k} = \y_4 \Big( \prod\limits_{k=3}^{m-t-2} \y_{2k} \Big) \y_{2(m-t)-1}$ and 
\[
\y_j \y_{F'} = \y_j \y_1 \y_5 \y_{G_{m-t}} = \y_1 \y_4 \frac{\y_{G_{m-t}}}{\y_{2(m-t)-2}} \y_{2(m-t)-1} =\y_j \y_{F'''},
\]
That is $\y_j (\y_F-\y_{F'}-\y_{F''}+\y_{F'''}) = 0$ modulo $J_\MP$.\\
(3) In the case $h> t+3$, we set $h'=m-h+4$, we have
\[
G_{h'}=\{C_{2k}: 3 \leq k \leq h'-1 \}\cup \{C_{2k-1}: h+1\leq k \leq m-h+3 \} \cup \{C_{2k}: m-h'+4 \leq k \leq m \}, 
\]
hence and from Lemma \ref{lem:Ckh}.(3) $\y_j \Big(\prod\limits_{k= m-h'+4 }^{m-1} \y_{2k} \Big) = \y_j \Big(\prod\limits_{k= m-h'+4 }^{m-2} \y_{2k} \Big)\y_{2m-1} $
and hence
\[
\y_j \y_2 \y_4 \y_{G_{h'}} = 0
\]
because $\y_{2m-1}\y_{2m}| \y_j \y_2 \y_4 \y_{G_{h'}} $. The same can be proved for $\y_{F'}$ and since $h\geq m-t+1$, then also for $\y_{F''}$ and $\y_{F'''}.$ This concludes the case $m$ odd.\\
If $m$ is even, then similar arguments hold, with 
\[
F=\{C_2,C_4\} \cup \{C_{2k-1}: 4\leq k \leq m-1 \} \cup \{C_{2m}\}\]
\[ 
F'=\{C_1,C_5\} \cup  \{C_{2k-1}: 4\leq k  \leq m-1 \} \cup \{C_{2m}\},
\]
and given $m-4=|\{C_{2k-1}: 4\leq k \leq m-1 \}|=2t$, we have
\[
F''=\{C_{2k} : k \in \{1,2, \ldots, m-t-1,m-t+1, \ldots,m\}\}
\]
\[
F'''=\{C_1\} \cup \{C_{2k} : k \in \{2,\ldots, m-t-3\}\} \cup\{C_{2(m-t-2)-1}\} \cup \{C_{2k} : k \in \{m-t,\ldots, m\}\}
\]
Also in this case one can verify that
\[
\y_{j}(\y_{F}-\y_{F'}-\y_{F''}+\y_{F'''}) \in J_\MP.
\]

\end{proof}
Let $\MP$ be a stair of length $\lambda$ with $\lambda=4,6$ or $\lambda\geq 8$ and $\MC=\{I_1,\dots,I_{\lambda}\}$ be its set of maximal cell intervals. If $l_1=2=l_{\lambda}$ i.e., $\MP=S_\lambda$, then by Proposition \ref{prop:stairs-not-level}, $S/I_{\MP}$ is not level. Next we consider the case when $l_1>2$ or $l_{\lambda}>2$ i.e., $\MP=\tilde{S}_\lambda$.
\begin{corollary}\label{cor:stair-not-level-with-length>2}
	Let $\MP=\tilde{S}_\lambda$ be a stair of length $\lambda$ with $\lambda=4,6$ or $\lambda\geq 8$ and $\MC=\{I_1,\dots,I_{\lambda}\}$ be its set of maximal cell intervals. Let $l_1>2$ or $l_{\lambda}>2$. Then $S/I_{\MP}$ is not level.
\end{corollary}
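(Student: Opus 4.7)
The plan is to lift the socle elements constructed in Proposition~\ref{prop:stairs-not-level} for $S_\ell$ to socle elements of the same degree in $\tilde{S}_\ell$. Without loss of generality assume $l_1=p+2>2$ with $p\geq 1$; the case $l_\ell>2$ is symmetric, and the two-sided case combines the arguments on each end. Label the cells of $\tilde{S}_\ell$ as $D_1,\ldots,D_{p+\ell+1}$ with $D_1,\ldots,D_{p+2}\in I_1$, $D_1$ the leaf cell, and $D_{p+k}$ playing the role of $C_k$ in the stair $S_\ell$. Let $\sigma$ be the polynomial from Proposition~\ref{prop:stairs-not-level}, with every index shifted by $p$: for $\ell$ even, $\sigma=\y_F-\y_{F'}$ with $F=\{D_{p+2},D_{p+4},\ldots,D_{p+\ell}\}$ and $F'=\{D_{p+1},D_{p+4},\ldots,D_{p+\ell-2},D_{p+\ell+1}\}$, and analogously for odd $\ell$.

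The main step is to verify $\y_j\sigma\in J_{\tilde{S}_\ell}$ for every $j\in\{1,\ldots,p+\ell+1\}$. For $j\in\{p+2,\ldots,p+\ell\}$ the cells $D_{p+2},\ldots,D_{p+\ell+1}$ lie on a stair of $\tilde{S}_\ell$, and the interior 2-minor relations of $\tilde{S}_\ell$ involving only indices $\geq p+2$ agree, after the shift by $p$, with those of $S_\ell$; hence Lemma~\ref{lem:Ckh}(1) (with its companions (2)--(3) and Corollary~\ref{cor:3step} in the odd-$\ell$ case) applies verbatim. For $j=p+\ell+1$ the leaf relations $\y_{p+\ell+1}^2,\y_{p+\ell}\y_{p+\ell+1}\in J_{\tilde{S}_\ell}$ coming from the leaf corner $b$ reproduce the corresponding boundary step of Proposition~\ref{prop:stairs-not-level}.

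The genuinely new cases are $j\in\{1,\ldots,p+1\}$, corresponding to cells lying in or at the stair-facing end of $I_1$. Here the key relations are the \emph{$I_1$-relation} $\y_i\y_k\equiv\y_{i-1}\y_{k+1}\pmod{J_{\tilde{S}_\ell}}$ for $1\leq i<k\leq p+2$ (with the convention $\y_0:=0$), obtained from the 2-minor of the inner interval $[D_i,D_k]\subseteq I_1$, together with the \emph{stair-interval relations} $\y_{p+r}\y_{p+r+1}\equiv\y_{p+r-1}\y_{p+r+2}$ for $r=2,\ldots,\ell-1$. Carrying out these reductions in parallel on each summand of $\y_j\sigma$, one shows that $\y_j\y_F,\y_j\y_{F'}$ (and, in the odd case, also $\y_j\y_{F''},\y_j\y_{F'''}$) all rewrite to a common monomial, so the alternating sum vanishes modulo $J_{\tilde{S}_\ell}$; for $j=1$ an extra reduction step introduces the factor $\y_0=0$ and each summand vanishes individually. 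A representative instance is $\tilde{S}_4$ with $p=1$, where $\y_2(\y_3\y_5-\y_2\y_6)\equiv\y_1\y_4\y_5-\y_1\y_3\y_6\equiv\y_1\y_3\y_6-\y_1\y_3\y_6=0$.

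Finally, by Corollary~\ref{cor:standard-monomial-R/J_P} the monomials in $\sigma$ are distinct standard monomials, so $\sigma\neq 0$ in $R_{\tilde{S}_\ell}/J_{\tilde{S}_\ell}$. The degree of $\sigma$ equals its degree in Proposition~\ref{prop:stairs-not-level}, which is strictly less than $r(S_\ell)$; since any non-attacking configuration of $S_\ell$ remains non-attacking in $\tilde{S}_\ell$, one has $r(\tilde{S}_\ell)\geq r(S_\ell)$, so $\deg\sigma<r(\tilde{S}_\ell)$ and $S/I_{\tilde{S}_\ell}$ is not level. The principal obstacle is the bookkeeping required to confirm that, for each $j\leq p+1$, the parallel reductions converge to a common monomial; this becomes more intricate when $\ell$ is odd and four summands must be tracked simultaneously, but the pattern is identical to the even case and follows the template of Proposition~\ref{prop:stairs-not-level}.
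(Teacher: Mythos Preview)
Your approach is essentially the same as the paper's: both lift the socle element $\sigma$ from Proposition~\ref{prop:stairs-not-level} by a shift of indices, invoke Lemma~\ref{lem:Ckh} (and Corollary~\ref{cor:3step} in the odd case) for the interior indices, and handle the new indices $j\leq l_1$ by iterated use of the relations $\y_i\y_k\equiv\y_{i-1}\y_{k+1}$ along $I_1$. The paper packages the last step as an explicit Claim showing $\y_j\y_{F_i}=\y_{j-1}\y_{F_i'}$ for $2\leq j\leq l_1$, which is exactly the ``parallel reduction to a common monomial'' you describe; your verification that $\sigma\neq 0$ via Corollary~\ref{cor:standard-monomial-R/J_P} and the inequality $r(\tilde{S}_\ell)\geq r(S_\ell)$ makes explicit a point the paper leaves implicit.
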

\begin{proof}
	Let $I_1=\{C_1,\dots,C_{l_1}\}$, $I_i=\{C_{l_1+i-2},C_{l_1+i-1}\}$ for $2\leq i\leq \lambda-1$ and $I_\lambda=$ \\ $\{C_{l_1+\lambda-2},C_{l_1+\lambda-1},\dots,C_{l_1+\lambda-3+l_{\lambda}}\}.$ Also let $F_i=\{C_{l_1},C_{l_1+2},\dots,C_{l_1+2i}\}$ for $i\geq 1$. 
	
	\noindent \textbf{Claim:} $\y_j\y_{F_i}= \y_{j-1}\y_{F'_{i}}$, where $F_i'=(F_i\setminus \{C_{l_1+2i}\})\cup \{C_{l_1+2i+1}\}$ for $2\leq j\leq l_1$.
	
	\noindent
	\textit{Proof of the claim:} Let $2\leq j\leq l_1$. Then
	\begin{align*}
		\y_j\y_{F_i}=(\y_j\y_{l_1})\y_{l_1+2}\cdots \y_{l_1+2i} & =\y_{j-1}(\y_{l_1+1}\y_{l_1+2})\cdots \y_{l_1+2i} \\
		& =\y_{j-1}\y_{l_1}(\y_{l_1+3}\y_{l_1+4})\cdots \y_{l_1+2i} \\
		& = \cdots \\
		& =\y_{j-1}\y_{l_1}\y_{l_1+2}\cdots (\y_{l_1+2i-1} \y_{l_1+2i}) \\
		& =\y_{j-1}\y_{l_1}\y_{l_1+2}\cdots \y_{l_1+2i-2} \y_{l_1+2i+1}=\y_{j-1}\y_{F_i'}.
	\end{align*}
	Let us consider the stair $S_\lambda$ with cells $\{C_{l_1-1},C_{l_1},\dots,C_{l_1+\lambda-2},C_{l_1+\lambda-1}\}$ of length $\lambda$.
	First we assume that $\lambda$ is even and $\lambda\geq 4$. Consider $F=\{C_{l_1},C_{l_1+2},\dots,C_{l_1+\lambda-2}\}$ and $F'=\{C_{l_1-1},C_{l_1+2},C_{l_1+4},\dots,C_{l_1+\lambda-4},C_{l_1+\lambda-1}\}$. Then by the proof of Proposition \ref{prop:stairs-not-level}, we have that $\y_j(\y_{F}-\y_{F'})\in J_\MP$ for all $l_1+1\leq j\leq l_1+\lambda-3$. For $j=1$, it is clear that $\y_j\y_{F},\y_j\y_{F'}\in J_0 \subseteq J_\MP$. Let $2\leq j\leq l_1$. Then by the Claim, we have $\y_j\y_{F}=\y_{j-1}\y_{F''}$, where $F''=(F\setminus \{C_{l_1+\lambda-2}\})\cup \{C_{l_1+\lambda-1}\}$.
	Also, we have $$\y_j\y_{F'}=(\y_j\y_{l_1-1})\y_{l_1+2}\cdots \y_{l_1+\lambda-4} \y_{l_1+\lambda-1}=\y_{j-1}\y_{F''}.$$
	Therefore, $\y_j(\y_{F}-\y_{F'})\in J_\MP$ for all $1\leq j\leq l_1$. Similarly, one can show that $\y_j(\y_{F}-\y_{F'})\in J_\MP$ for all $l_1+\lambda-2\leq j\leq l_1+\lambda-3+l_\lambda$. Thus we get $\y_j(\y_F-\y_{F'})\in J_\MP$ for all $j$, hence $\y_F-\y_{F'}\in \soc(R/J_{\MP})$.
	
	Now assume that $\lambda$ is odd and $\lambda=2m-1$ such that $m$ is odd. We consider as in the proof of Proposition \ref{prop:stairs-not-level}
	\[
	F=\{C_{l_1},C_{l_1+2}\} \cup \{C_{l_1+2k-3}: 4\leq k \leq m \}, F'=\{C_{l_1-1},C_{l_1+3}\} \cup  \{C_{l_1+2k-3}: 4\leq k  \leq m \},
	\]
	\[
	F''=\{C_{l_1-2+2k} : k \in \{1,2, \ldots, m-t-1,m-t+1, \ldots,m\}\} \text{ and } F'''=\{C_{l_1-1}\} \cup
	\]
	\[
	 \{C_{l_1-2+2k} : k \in \{2,\ldots, m-t-2\}\} \cup\{C_{l_1+2(m-t)-3}\} \cup \{C_{l_1-2+2k} : k \in \{m-t+1,\ldots, m\}\}.
	\]
	Note that $\y_1\y_F,\y_1\y_{F'},\y_1\y_{F''},\y_1\y_{F'''}\in J_\MP$. Let $2\leq j\leq l_1$. Then $\y_j\y_{l_1}\y_{l_1+2}= \y_{j-1}\y_{l_1}\y_{l_1+3}$ and $\y_j\y_{l_1-1}\y_{l_1+3}= \y_{j-1}\y_{l_1}\y_{l_1+3}$ which further implies that $\y_j(\y_F-\y_{F'})\in J_\MP$ for $2\leq j\leq l_1$.
	Let $F''_1=\{C_{l_1-2+2k} : k \in \{1,2, \ldots, m-t-1\}\}\subseteq F''$. Then by \textit{Claim}, for $2\leq j\leq l_1$, we have $\y_j\y_{F''_1}=\y_{j-1}\y_{F''_2}$, where $F''_2=(F''_1\setminus \{C_{l_1+2(m-t)-4}\})\cup \{C_{l_1+2(m-t)-3}\}$. Also, for $F'''_1=\{C_{l_1-1}\} \cup \{C_{l_1-2+2k} : k \in \{2,\ldots, m-t-2\}\} \cup\{C_{l_1+2(m-t)-3}\}\subseteq F'''$, we have $\y_j\y_{F'''_1}=\y_{j-1}\y_{F''_2}$. Therefore, $\y_j(\y_{F''}-\y_{F'''})\in J_\MP$ for $1\leq j\leq l_1$. Similarly, one can show that $\y_j(\y_{F}-\y_{F'}),\y_j(\y_{F''}-\y_{F'''})\in J_\MP$ for all $l_1+\lambda-2\leq j\leq l_1+\lambda-3+l_\lambda$. Hence, 
	\[
	\y_{j}(\y_{F}-\y_{F'}-\y_{F''}+\y_{F'''}) \in J_\MP \quad \forall j.
	\]
	This completes the proof.
\end{proof}
\begin{definition}
	A stair $\MP$ of length $\lambda$ with $\lambda=4,6$ or $\lambda \geq 8$ is called a \emph{bad stair}.
\end{definition}

\begin{theorem}\label{thm:badstair}
	Let $\MP$ be a polyomino containing a bad stair. Then $S/I_\MP$ is not level. 
\end{theorem}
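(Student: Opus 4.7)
The plan is to lift the socle element produced by Proposition \ref{prop:stairs-not-level} or Corollary \ref{cor:stair-not-level-with-length>2} for the standalone bad stair to a socle element of $R/J_\MP$ of degree $r(\MP)-1$, which, combined with Corollary \ref{cor:maxdeg}, will show that $R/J_\MP$ (and therefore $S/I_\MP$) is not level.

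Let $\MS = \{I_{p+1}, \ldots, I_{p+\ell}\}$ be the bad stair inside $\MP$. Applying the relevant cited result to $\MS$, viewed as a standalone polyomino, one obtains an element
\[
\alpha = \sum_i \epsilon_i \y_{F_i} \in R, \qquad \epsilon_i \in \{\pm 1\},
\]
of degree $r(\MS)-1$, with $F_i$ facets of $\RR_\MS$ supported on $V(\MS)$, satisfying $\y_j \alpha \in J_\MS \subseteq J_\MP$ for every cell $j \in V(\MS)$. I would then choose a non-attacking rook configuration $T \subseteq V(\MP)\setminus V(\MS)$ placing exactly one rook in each interval of $\MP$ that lies outside the stair, with $|T| = r(\MP) - r(\MS)$. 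In the boundary intervals $I_p$ and $I_{p+\ell+1}$ (which, when they exist, have length strictly greater than $2$ by the definition of the stair), I would place the rook at a non-corner cell, which is possible precisely because $l_p,\,l_{p+\ell+1} > 2$.

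With $T$ so constructed, $\y_T \alpha$ has degree $|T| + r(\MS) - 1 = r(\MP) - 1$, and I would verify it lies in $\soc(R/J_\MP)$ by a case analysis on the cells of $\MP$: for $j \in V(\MS)$, $\y_j(\y_T \alpha) = \y_T(\y_j \alpha) \in J_\MP$ by the defining property of $\alpha$; for $j \notin V(\MS)$, the cell $j$ lies in the same maximal interval as some rook of $T$, so $\y_j \y_T \in J_\MP$ and hence $\y_j(\y_T \alpha) \in J_\MP$. Non-vanishing of $\y_T \alpha$ modulo $J_\MP$ would follow by passing to the initial ideal via Lemma \ref{lem:inSoc} and Lemma \ref{lem:generators-of-inSocle}: the leading term $\ini_<(\y_T\alpha) = \y_T \cdot \ini_<(\alpha)$ is the monomial of a facet of $\RR_\MP$, hence non-zero in $R/\ini_<(J_\MP)$.

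The main technical obstacle is arranging that $T$ is simultaneously compatible with every $F_i$ appearing in $\alpha$ at the boundary of $\MS$. In the even-length case of Proposition \ref{prop:stairs-not-level}, for instance, the facet $F'$ uses the leaf corners of the stair, which in $\MP$ coincide with the corner cells $c_L \in I_p \cap I_{p+1}$ and $c_R \in I_{p+\ell}\cap I_{p+\ell+1}$, while the facet $F$ does not use them. This means a naive choice of rook in $I_p$ or $I_{p+\ell+1}$ is compatible with one of the $F_i$'s but conflicts with the other. To circumvent this, I would apply the slide relations of Lemma \ref{lem:Ckh} and Corollary \ref{cor:3step} iteratively into the longer adjacent intervals $I_p$ and $I_{p+\ell+1}$, rewriting each $\y_{F_i}$ modulo $J_\MP$ so that the boundary behavior of all $F_i$ becomes uniform, exactly in the spirit of the argument in the proof of Corollary \ref{cor:stair-not-level-with-length>2}. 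Once this normalization is carried out, a single $T$ works and the socle element $\y_T\alpha$ witnesses the non-levelness of $S/I_\MP$.
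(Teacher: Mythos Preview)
Your proposal follows the same strategy as the paper: take the low-degree socle element on the bad stair furnished by Proposition~\ref{prop:stairs-not-level}/Corollary~\ref{cor:stair-not-level-with-length>2}, multiply by a maximal rook monomial on the complementary pieces, and verify the product is a socle element of $R/J_\MP$ of degree strictly less than $r(\MP)$. The paper packages the boundary issue slightly differently---it absorbs the long end-intervals into $\tilde S_\ell$ so that Corollary~\ref{cor:stair-not-level-with-length>2} already delivers a socle element compatible with those intervals, and then multiplies by maximal facet monomials $\y_{G_1},\y_{G_2}$ on the two residual paths---whereas you propose to do the slide-relation normalization during the lifting step; but this is exactly ``in the spirit of the argument in the proof of Corollary~\ref{cor:stair-not-level-with-length>2}'', as you say, and amounts to the same computation.

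One small imprecision worth fixing: the phrase ``placing exactly one rook in each interval of $\MP$ that lies outside the stair, with $|T|=r(\MP)-r(\MS)$'' conflates two things that need not agree. The number of intervals outside the stair is in general larger than $r(\MP)-r(\MS)$, and a literal one-rook-per-interval placement will not be non-attacking. What you actually need (and what the paper uses) is a \emph{maximum} facet $G_i\in\FF(\RR_{\MP_i})$ on each complementary path $\MP_i$; then $|G_1|+|G_2|=r(\MP_1)+r(\MP_2)=r(\MP)-r(\MS)$ and every cell outside the stair is attacked by some rook of $G_1\cup G_2$, which is precisely what your case analysis for $j\notin V(\MS)$ requires. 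With that correction, your argument goes through and coincides with the paper's.
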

\begin{proof}
	Assume that $\MP$ contains a bad stair $S_\lambda$ or $\tilde{S}_\lambda$. Here, we show that if $\MP$ contains $\tilde{S}_\lambda$, then $S/I_\MP$ is not level. The case when $\MP$ contains $S_\lambda$ is also similar. According to Corollary \ref{cor:stair-not-level-with-length>2}, $\tilde{S}_\lambda$ is not level, hence let $f_\lambda \in R_{\tilde{S}_\lambda}/J_{\tilde{S}_\lambda}$ such that $f_\lambda \in \soc (R_{\tilde{S}_\lambda}/J_{\tilde{S}_\lambda})$ and $\deg(f_\lambda) < r(\tilde{S}_\lambda)$.  The stair $\tilde{S}_\lambda$ is embedded in $\MP$ in some intervals $I_{k+1},I_{k+2},\ldots, I_{k+\lambda}$ with $I_{k+1}=\{C_1^{k+1},C^{k+1}_2,\dots,C^{k+1}_{r_{k+1}}\}$ where $l(I_{k+1})=r_{k+1}$. Let $\MQ$ be the collection of cells having maximal intervals $\MC \setminus \{I_{k+1},\ldots, I_{k+\lambda}\}$, in particular it is the union of two path  polyominoes $\MP_1$ and $\MP_2$. Let $\y_{G_i} \in \soc (R_{\MP_i}/J_{\MP_i})$ with $\deg \y_{G_i}=r(\MP_i)$. We now show that 
	$$\y_{G_1}\y_{G_2} f_\lambda \in \soc (R/J_\MP).$$ 
	Since $f_\lambda \in \soc (R_{\tilde{S}_\lambda}/J_{\tilde{S}_\lambda})$, then it follows from the proof of Corollary \ref{cor:stair-not-level-with-length>2} that for all $C_j\in \tilde{S}_\lambda\setminus \{C_1^{k+1}\}$, we have $\y_jf_\lambda\in J_{\tilde{S}_\lambda}\subseteq J_{\MP}$. Also for similar reason, we can show the \text{claim} of Corollary \ref{cor:stair-not-level-with-length>2} for $\y_j$ where $C_j=C_1^{k+1}$. Therefore, we can conclude that $\y_jf_{\lambda}\in J_{\MP}$ for all $C_j\in \tilde{S}_\lambda$. This implies that $\y_j(\y_{G_1}\y_{G_2} f_\lambda) \in J_{\MP}$ for all $C_j\in \tilde{S}_\lambda$. Let $C_j\in \MP_i$ be any cell. Then $\y_j\y_{G_i}\in J_{\MP_i}\subseteq J_{\MP}$ for all $C_j\in \MP_i$ and $i=1,2$. Therefore, $\y_j(\y_{G_1}\y_{G_2} f_\lambda)\in J_{\MP}$ for all $C_j\in \MP_1\sqcup \MP_2$, and hence $\y_{G_1}\y_{G_2} f_\lambda\in \soc (R/J_\MP).$ Since $\deg(\y_{G_1}\y_{G_2} f_{\lambda})<r(\MP_1)+r(\MP_2)+r(\tilde{S}_\lambda)=r(\MP)$, by Corollary \ref{cor:maxdeg}, $\soc (R/J_\MP)$ has elements of at least two different degrees. Thus, $R/J_\MP$ is not level and hence, $S/I_{\MP}$ is also not level.
\end{proof}

\begin{proposition}\label{prop:stairs-level}
	Let $\MP= S_{\lambda}$ be a stair of length $\lambda$. The followings are equivalent:
	\begin{enumerate}
		\item $S/I_\MP$ is level;
		\item $\lambda=2,3,5,7$;
	\end{enumerate} 
\end{proposition}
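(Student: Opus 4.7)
The implication $(1) \Rightarrow (2)$ is the contrapositive of Proposition \ref{prop:stairs-not-level}: for $\ell = 4$, $\ell = 6$, or $\ell \geq 8$, the stair $S_\ell$ is not level. So the substance is in $(2) \Rightarrow (1)$, which I plan to split according to the rook number of $S_\ell$.

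For $\ell \in \{2,3\}$ the stair $S_\ell$ has at most four cells and the rook number equals $\lceil(\ell+1)/2\rceil = 2$; by \cite[Corollary 3.16]{RR21} this coincides with $\reg(S/I_\MP)$, so Corollary \ref{cor:level-with-regularity2} immediately forces $S/I_\MP$ to be level. For $\ell = 5$ the argument already given in Example \ref{exam: level but initial is not level} proves levelness and can be invoked directly.

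The only remaining case, and the real content of the proposition, is $\ell = 7$, where $r(S_7) = 4$ and the regularity-2 shortcut is unavailable. My plan is to show that $\soc(R/J_\MP)$ is concentrated in degree $4$ by combining the containment $\ini_{<}(\soc(R/J_\MP)) \subseteq \soc(R/\ini_{<}(J_\MP))$ of Lemma \ref{lem:inSoc} with the description of the right-hand side provided by Lemma \ref{lem:generators-of-inSocle}. Corollary \ref{cor:maxdeg} accounts for the facets of size $4$, so it suffices to exclude every facet $F \in \FF(\RR_\MP)$ with $|F|=3$ from $\ini_{<}(\soc(R/J_\MP))$. Identifying $\RR_{S_7}$ with the independence complex of the path $P_8$, a short enumeration (any non-maximum maximal independent set has at least $\lceil 8/3 \rceil = 3$ elements and is easily checked to be one of the four listed) identifies the size-$3$ facets as exactly
\[
\{C_1,C_4,C_7\},\ \{C_2,C_4,C_7\},\ \{C_2,C_5,C_7\},\ \{C_2,C_5,C_8\}.
\]

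For each of these four facets $F$, I would display a cell $C_k \in F$ so that $(F\setminus\{C_k\})\cup\{C_{k-1},C_{k+1}\}$ is again a facet, and then invoke Lemma \ref{lem:FF'}: replacing $C_4$ in the first facet gives $\{C_1,C_3,C_5,C_7\}$; replacing $C_7$ in the second gives $\{C_2,C_4,C_6,C_8\}$; replacing $C_2$ in the third gives $\{C_1,C_3,C_5,C_7\}$; and replacing $C_2$ in the fourth gives $\{C_1,C_3,C_5,C_8\}$. Each resulting set is trivially independent, hence a facet. The main obstacle is not any single step but simply verifying that the enumeration of size-$3$ facets is exhaustive; once that list is in hand, the four one-line verifications close the proof, showing that $\soc(R/J_\MP)$ is generated in degree $4$ and so $S/I_\MP$ is level.
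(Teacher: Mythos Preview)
Your proposal is correct and follows essentially the same approach as the paper's proof: both handle $\ell\in\{2,3\}$ via Corollary~\ref{cor:level-with-regularity2}, and both treat $\ell\in\{5,7\}$ by listing the submaximal facets of $\RR_{S_\ell}$ and applying Lemma~\ref{lem:FF'} to each (the paper labels cells $A,\ldots,H$ instead of $C_1,\ldots,C_8$, and for the facet $\{C_2,C_5,C_8\}$ it splits $C_5$ rather than your $C_2$, but either choice works). Your write-up is slightly more explicit in invoking Lemma~\ref{lem:inSoc}, Lemma~\ref{lem:generators-of-inSocle}, and the $\lceil 8/3\rceil$ lower bound to rule out smaller facets, which the paper leaves implicit.
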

\begin{proof}
$(1) \Rightarrow (2)$ follows from Proposition \ref{prop:stairs-not-level}.
Even if $(2) \Rightarrow (1)$ can be showed by direct computation, we want to give a direct proof. In the case $\lambda=2,3$, the rook number is $2$, hence the assertion follows from Corollary \ref{cor:level-with-regularity2}. For the case $\lambda=5,7$, we will make use of Lemma \ref{lem:FF'}. In fact we have to prove that any $F \in 	\RR_{\MP}$ with $|F|=r(\MP)-1$ satisfies the hypothesis of Lemma \ref{lem:FF'}.
\begin{figure}[H]
\centering
\begin{subfigure}{0.5\textwidth}
\centering
\begin{tikzpicture}
\draw (0,0)--(2,0);
\draw (0,1)--(3,1);
\draw (1,2)--(4,2);
\draw (2,3)--(4,3);

\draw (0,0)--(0,1);
\draw (1,0)--(1,2);
\draw (2,0)--(2,3);
\draw (3,1)--(3,3);
\draw (4,2)--(4,3);

\node at (0.5,0.5) {$A$};
\node at (1.5,0.5) {$B$};
\node at (1.5,1.5) {$C$};
\node at (2.5,1.5) {$D$};
\node at (2.5,2.5) {$E$};
\node at (3.5,2.5) {$F$};
\end{tikzpicture}
\caption{The stair $S_5$}
\end{subfigure}%
\begin{subfigure}{0.5\textwidth}
\centering
\begin{tikzpicture}
\draw (0,0)--(2,0);
\draw (0,1)--(3,1);
\draw (1,2)--(4,2);
\draw (2,3)--(5,3);
\draw (3,4)--(5,4);

\draw (0,0)--(0,1);
\draw (1,0)--(1,2);
\draw (2,0)--(2,3);
\draw (3,1)--(3,4);
\draw (4,2)--(4,4);
\draw (5,3)--(5,4);

\node at (0.5,0.5) {$A$};
\node at (1.5,0.5) {$B$};
\node at (1.5,1.5) {$C$};
\node at (2.5,1.5) {$D$};
\node at (2.5,2.5) {$E$};
\node at (3.5,2.5) {$F$};
\node at (3.5,3.5) {$G$};
\node at (4.5,3.5) {$H$};
\end{tikzpicture}
\caption{The stair $S_7$}
\end{subfigure}
\caption{}\label{fig:S5S7}
\end{figure}
We refer to the labellings given in Figure \ref{fig:S5S7}.
If $\lambda=5$, then $r(\MP)=3$ and the unique facet of cardinality $2$ is $\{B,E\}$ and both $B$ and $E$ satisfy the hypothesis of Lemma \ref{lem:FF'}.\\
If $\lambda=7$, then $r(\MP)=4$ and the facets of cardinality $3$ are 
\[
\{A,D,G\},\{B,D,G\},\{B,E,G\},\{B,E,H\}.
\] 
The cells that satisfy the hypothesis of Lemma \ref{lem:FF'} are respectively $D$,$G$,$B$ and $E$.
\end{proof}

\begin{theorem}\label{theo:level}
Let $\MP$ be a path polyomino. The followings are equivalent:
\begin{enumerate}
	\item $S/I_\MP$ is level;
    \item $\MP$ does not contain bad stairs.
\end{enumerate}.
\end{theorem}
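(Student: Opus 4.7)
The direction $(1)\Rightarrow(2)$ is immediate from Theorem~\ref{thm:badstair}: a polyomino containing a bad stair is not level, and the contrapositive gives what we want.

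For $(2)\Rightarrow(1)$, assume every stair of $\MP$ has length in $\{2,3,5,7\}$. Since $S/I_\MP$ is Cohen--Macaulay by Lemma~\ref{lem:dim-of-poly} and the linear forms of Proposition~\ref{prop:regseq} are a system of parameters, levelness of $S/I_\MP$ is equivalent to $\soc(R/J_\MP)$ being generated in a single degree. Corollary~\ref{cor:maxdeg} already produces socle elements of degree $r(\MP)$, so the task is to exclude socle elements of smaller degree. Suppose for contradiction $f\in\soc(R/J_\MP)$ has $\deg(f)<r(\MP)$. By Lemma~\ref{lem:inSoc} combined with Lemma~\ref{lem:generators-of-inSocle}, $\ini_<(f)=\y_F$ for some facet $F\in\FF(\RR_\MP)$ with $|F|<r(\MP)$.

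The plan is to show that every such $F$ admits a reduction forbidding $\y_F$ from being the initial term of a socle element. Concretely, I would exhibit either (a) a cell $C_k\in F$ with $(F\setminus\{C_k\})\cup\{C_{k-1},C_{k+1}\}\in\FF(\RR_\MP)$, so that Lemma~\ref{lem:FF'} directly yields $\y_F\notin\ini_<(\soc(R/J_\MP))$, or (b) a more elaborate reduction of the kind built in Lemma~\ref{lem:Ckh} and Corollary~\ref{cor:3step}, producing a standard monomial $\y_G<\y_F$ with $\y_j(\y_F-\y_G)\in J_\MP$ for every~$j$ and concluding the same way. Both types of reduction contradict $\ini_<(f)=\y_F$.

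To locate such a reduction, I would decompose $\MP$ into its maximal stairs, glued along the intervals of length $\geq 3$ separating them. Because $|F|<r(\MP)$, the restriction of $F$ is sub-optimal on at least one component. If the sub-optimality involves a length-$\geq 3$ interval $I_k$, then $F$ contains a non-single cell of $I_k$ and the exchange used in Theorem~\ref{thm:interval-length-more-than3} places us in case (a). If the sub-optimality is internal to a stair $\MS\subseteq\MP$ of length $l\in\{2,3,5,7\}$, then the case analysis executed in the proof of Proposition~\ref{prop:stairs-level} (using Lemma~\ref{lem:FF'} directly for $l\in\{5,7\}$ and using Corollary~\ref{cor:level-with-regularity2} for $l\in\{2,3\}$, where the rank is already $2$) supplies the required reduction within $\MS$.

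The main technical obstacle is ensuring that a reduction localized to a stair $\MS$ can be lifted to $\MP$. One must verify both that the proposed replacement cells are not attacked by rooks of $F$ lying outside $\MS$, and that the identity $\y_j(\y_F-\y_G)\in J_\MP$ continues to hold in $R/J_\MP$ after multiplying through by the monomial $\y_{F\setminus\MS}$. Because the reductions of Lemma~\ref{lem:Ckh} and Corollary~\ref{cor:3step} involve only variables indexed by cells of $\MS$ together with its two boundary cells shared with the adjacent intervals, and because the facet condition on $F$ constrains the position of rooks of $F$ in those adjacent long intervals, a careful case-by-case analysis at each junction---essentially the inverse of the product construction employed in the proof of Theorem~\ref{thm:badstair}---furnishes the required lifting and completes the argument.
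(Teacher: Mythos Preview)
Your overall plan matches the paper's: reduce to $R/J_\MP$, assume a socle element $f$ of degree $<r(\MP)$, pass to $\ini_<(f)=\y_F$ via Lemma~\ref{lem:inSoc} and Lemma~\ref{lem:generators-of-inSocle}, and derive a contradiction with Lemma~\ref{lem:FF'}. That is exactly how the paper proceeds.

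However, your option (b) is backwards. If you produce $\y_G<\y_F$ with $\y_j(\y_F-\y_G)\in J_\MP$ for every $j$, you have shown $\y_F-\y_G\in\soc(R/J_\MP)$, hence $\y_F\in\ini_<(\soc(R/J_\MP))$---precisely the opposite of what you need. Lemma~\ref{lem:Ckh} and Corollary~\ref{cor:3step} are tools for \emph{constructing} low-degree socle elements (this is how Proposition~\ref{prop:stairs-not-level} and Theorem~\ref{thm:badstair} work in the non-level direction); they do not help exclude them. Drop option (b) entirely.

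The paper's proof uses only option (a), and is shorter than your sketch suggests. The key observation you are missing is this: since $|F|<r(\MP)$, the facet $F$ must contain a non-single cell $C_k$, sitting at the junction of two intervals $I_j,I_{j+1}$. If both $C_{k-1}$ and $C_{k+1}$ are single, Lemma~\ref{lem:FF'} applies immediately. Otherwise one of them, say $C_{k+1}$, is non-single, forcing $l_{j+1}=2$, so $I_{j+1}$ is an interior interval of a stair of length $\ell\in\{3,5,7\}$. For $\ell=3$ one checks directly that a single-cell neighbour is available; for $\ell\in\{5,7\}$ the explicit list of sub-maximal facets in the proof of Proposition~\ref{prop:stairs-level} exhibits a cell $C_m$ inside the stair with $(F\setminus\{C_m\})\cup\{C_{m-1},C_{m+1}\}\in\RR_\MP$, and Lemma~\ref{lem:FF'} finishes. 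No ``lifting'' of algebraic identities is needed---only the combinatorial check that the two replacement cells are unattacked, which is immediate because they lie inside the stair and $F$ already has a rook $C_k$ in the relevant intervals. Your invocation of Corollary~\ref{cor:level-with-regularity2} for the $\ell\in\{2,3\}$ sub-stairs is also off target: that corollary concerns the global regularity of $S/I_\MP$, not a local piece.
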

\begin{proof} From Theorem \ref{thm:badstair}, we have that $(1) \Rightarrow (2)$.
We now prove $(2)\Rightarrow (1)$.
If $\MP$ does not contain maximal interval of length $2$, then by Theorem \ref{thm:interval-length-more-than3}, it is level. Hence assume it contains intervals of length $2$. Since $\MP$ does not contain bad stair, if $\MP$ contains a stair $S_{\lambda}$ or $\tilde{S}_{\lambda}$ then it must be true that $\lambda\in \{2,3,5,7\}$. It is enough to show that there is no element of degree $<r(\MP)$ in $\soc(R/J_{\MP})$. Let $f\in \soc(R/J_\MP)$ be an element of degree $<r(\MP)$ with $\ini(f)=u$. Then by Lemma \ref{lem:inSoc}, $u\in \ini(\soc(R/J_{\MP}))\subseteq \soc(R/\ini(J_\MP))$ which implies that $u$ can be written as $u=\y_F$ for some $F\in \FF(\RR_\MP)$ with $|F|<r(\MP)$. Therefore, $F$ contains a non-single cell $C_k$ in the intersection of two intervals $I_j$ and $I_{j+1}$. If both intervals have a single cell, $C_{k-1}$ and $C_{k+1}$, that is $I_j,I_{j+1}$ is a stair $\tilde{S}_2$, then we are in the hypotheses of Lemma \ref{lem:FF'}, $(F\setminus \{C_k\})\cup \{C_{k-1},C_{k+1}\}\in \FF(\RR_\MP)$ which is a contradiction by Lemma \ref{lem:FF'}. Moreover, if one between $I_j$ or $I_{j+1}$ has no single cell, say $I_{j+1}$, then $I_{j+1}$ belongs to a stair $S_{\lambda}$ with $\lambda \in \{3,5,7\}$. The case $\lambda=3$ can be eliminated by the following observation: since $\lambda_{j+1}=2$, then $\lambda_{j},\lambda_{j+2}>2$ and in particular, the cell $C_{k-1}$ in $I_j$ is single, and one can take $(F\setminus \{C_{k}\}) \cup \{C_{k-1}\} \in  \FF(\RR({\MP}))$. Hence, we are left with the case $\lambda \in \{5,7\}$. Observe that $F$ contains a facet $F'\in \MF(\RR_{S_\lambda})$ with $C_k\in F'$ and $|F'|<r(S_\lambda)$. Then $F'$ is one of the form given in the proof of Proposition \ref{prop:stairs-level}. Therefore, $F'$ contains a $C_m$ such that $(F'\setminus \{C_m\})\cup \{C_{m-1},C_{m+1}\}\in \FF(\RR_{S_\lambda})$. Hence, $(F\setminus \{C_m\})\cup \{C_{m-1},C_{m+1}\}\in \FF(\RR_\MP)$ which is a contradiction by Lemma \ref{lem:FF'}. This completes the proof.
\end{proof}

%

\section{Levelness and Pseudo-Gorensteinnes of simple thin polyominoes}
In \cite{MRR1}, we developed different routines to generate polyominoes and test their primality. 
After a slight modification of the code provided in that paper, that is possible to download from \cite{RRSw}, we generated all simple thin polyominoes, classifying them, by using Macaulay2 (see \cite{M2}), with respect to the following properties:
\begin{description}
\item[(G)] Gorenstein;
\item[(PG)] Pseudo-Gorenstein (not Gorenstein);
\item[(L)] Level (not Gorenstein);
\item[(N)] None of the above.
\end{description}
In  Figure \ref{fig:rank6}, we display all the non-path simple thin polyominoes of rank 6. We observe that they are all level, but not Gorenstein. 
\begin{table}[H]
	\centering
	\begin{tabular}{|c|c|c|c|c|c|c|c|}
		\hline
		{\bf Rank}               &\textbf{4} &\textbf{5} &\textbf{6} &\textbf{7}  &\textbf{8}  &\textbf{9}   &\textbf{10}\\
		\hline\hline 
		Gorenstein        &0 &3 &0  &10 &0   &47  &0 \\
		\hline
		Level             &4 &7 &26 &65 &230 &684 &2383 \\
		\hline
		pseudo-Gorenstein &0 &1 &0  &5  &0   &36  &0 \\
		\hline
		None of the above &0 &0 &1  &2  &20  &48  &302 \\
		\hline
		
	\end{tabular}
	\caption{The partition of all simple thin polyominoes of rank less than or equal to 10}
	\label{tab:table_pg_l}
\end{table}

In the website \cite{RRSw}, it is possible to download all the simple thin polyominoes, with respect to the previous partition, having rank in the set $\{4,\ldots,10\}$.

\begin{remark}\label{rem:odd}
	We observe that the polyominoes of rank $1$, $2$, and $3$ are paths and are studied in the previous sections. In particular, the single cell is Gorenstein, the domino (the polyomino with $2$ cells) is level, and there are $2$ paths with rank $3$: one is level and the other is Gorenstein. By the Table \ref{tab:table_pg_l} we observe that the pseudo-Gorenstein simple thin polyominoes have odd rank in the interval $\leq 10$. 
\end{remark}

Inspired by Remark \ref{rem:odd} and Corollary  \ref{cor:pseudo-gorenstein} we obtain the following
\begin{theorem}\label{thm:pgrank}
	Let $\MP$ a simple thin pseudo-Gorenstein polyomino. Then
	\begin{enumerate}
		\item $\rk \MP=2r-1$ for $r\geq 1$;
		\item $r(\MP)=r$.
	\end{enumerate}
\end{theorem}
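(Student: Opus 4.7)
The plan is to adapt Corollary \ref{cor:pseudo-gorenstein} from paths to general simple thin polyominoes by analyzing the combinatorial structure of the rook complex. By Lemma \ref{lem:pseudo-gorenstein}, the pseudo-Gorenstein hypothesis gives a unique maximum rook configuration $F$ with $|F| = r(\MP) =: r$, establishing (2) and reducing (1) to the equality $\rk\MP = 2r - 1$.

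I would build on three structural facts. First, by Remark \ref{rem:pg} each maximal cell interval has at most one single cell and $F$ contains every single cell. Second, by thinness two distinct maximal intervals share at most one cell, so each non-single cell lies in exactly two intervals. Third, by simplicity the \emph{interval graph} $G$ on $\MC$ (with edges between intervals sharing a cell) is a tree, since any cycle would enclose either a $2 \times 2$ block (violating thinness) or a hole (violating simplicity); in particular the leaves of $G$ have length $2$ and are therefore \emph{happy}, i.e., contain their unique single cell. Setting $s = |\MC|$, $s_s$ the number of single cells, and $s_0 = s - s_s$ the number of \emph{needy} intervals (those with no single cell), the second and third facts provide a bijection between non-single cells and edges of $G$, so
\[
\rk\MP = s_s + (s - 1).
\]

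Next I would compute $r$ by analyzing $F$. All $s_s$ single cells lie in $F$. Any additional rook in $F$ must sit at a shared cell, and a shared cell inside a happy interval $J$ would give $J$ a second rook beyond its single, breaking non-attackingness. Hence the additional rooks are at cells shared between two needy intervals, in bijection with a matching on the needy subgraph $H$ of $G$, whence $r = s_s + M(H)$, where $M(H)$ is the maximum matching size of $H$.

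The crux of the argument is to show that $H$ admits a perfect matching, i.e.\ $M(H) = s_0/2$. Here I would exploit uniqueness of $F$: should some needy interval $I$ be unmatched, it would be uncovered in $F$, and one could build a distinct maximum configuration via a swap of the form $(F \setminus \{D_J\}) \cup \{C\}$, where $C = I \cap J$ for a suitable happy interval $J$ reachable from $I$ through an alternating path in $G$ (a happy vertex is guaranteed by happiness of leaves). An analogous alternating-path swap rules out ambiguity in the matching itself. This alternating-path argument, essentially drawn from matching theory, is the main technical obstacle: one must verify carefully that the swap preserves the non-attacking condition throughout $G$ and propagates consistently along the tree. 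Once $M(H) = s_0/2$ is established,
\[
2r - 1 = 2s_s + 2\cdot \tfrac{s_0}{2} - 1 = 2s_s + s_0 - 1 = s_s + s - 1 = \rk\MP,
\]
as required.
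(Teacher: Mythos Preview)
Your approach is correct and takes a genuinely different route from the paper. The paper argues by induction on $\rk\MP$: it invokes the structural fact from \cite{RR21} that every simple thin polyomino admits a \emph{tail} or \emph{endcut} interval $I$, observes that pseudo-Gorensteinness forces $|I|=2$, and then removes or collapses $I$ to obtain a smaller pseudo-Gorenstein polyomino $\MP'$ to which the inductive hypothesis applies. Your argument is instead a direct global count via the tree structure of the interval graph $G$ and a matching on the needy subgraph $H$. Your route is more self-contained, since it avoids the tail/endcut machinery, at the cost of having to justify that $G$ is a tree (your one-line reason is right but deserves a few more words).

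Two remarks on the execution. First, the step you flag as the ``main technical obstacle'' is simpler than you fear: no alternating path is needed. If some needy interval $I$ carries no rook of $F$, pick any cell $C\in I$; since $F$ is maximal, $C$ is attacked by the unique rook $R$ lying in the other interval $J$ through $C$, and $(F\setminus\{R\})\cup\{C\}$ is already a non-attacking configuration of size $r$ distinct from $F$, contradicting uniqueness. This single swap suffices to force every needy interval to be matched, hence $M(H)=s_0/2$. Second, you do not need uniqueness of the matching for the theorem (your ``ambiguity'' clause is superfluous); mere existence of a perfect matching in $H$ is enough for the final arithmetic.
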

\begin{proof}
 We use induction on $\rk \MP$. The cases $\rk \MP=1,2,3,4$ are in Remark \ref{rem:odd} and Table \ref{tab:table_pg_l}. Suppose by induction hypothesis that for a fixed $k$ and for all $\MP$ such that   $1\leq \rk \MP\leq k$ the conditions (1) and (2) hold. Now focus on the case $\rk \MP=k+1$. 
 We recall that any simple thin polyomino has an interval, namely $I$, that is called either a tail or an endcut (see Definition 3.4 of \cite{RR21} and Figure \ref{fig:oper}). 
\begin{figure}[H]
\centering
\begin{subfigure}[t]{0.45\textwidth}
\centering
\resizebox{0.8\textwidth}{!}{
\begin{tikzpicture}
\draw (2,3)--(2,1);
\draw (0,1)--(0,2);
\draw (1,3)--(1,1);
\draw (3,1)--(3,2);

\draw (1,3)--(2,3);
\draw (0,1)--(3,1);
\draw (0,2)--(3,2);

\draw[dashed] (-1,2.5)--(0,2);
\draw[dashed] (-1,0.5)--(0,1);
\draw[dashed] (-1,2.5) arc(90:270:1);


\node at (1.5,2.5){$C$};
\node at (1.5,1.5){$D$};
\node at (0.5,1.5){$D_1$};
\node at (2.5,1.5){$D_2$};

\end{tikzpicture}}
\caption*{$\PP$ with endcut $[C,D]$}
\end{subfigure}\  \ \vrule{}  \ \ %
\begin{subfigure}[t]{0.45\textwidth}
\centering
\resizebox{0.8\textwidth}{!}{
\begin{tikzpicture}
\draw (2,3)--(2,1);
\draw (0,1)--(0,2);
\draw (1,2)--(1,1);
\draw (3,1)--(3,3);

\draw (2,3)--(3,3);
\draw (0,1)--(3,1);
\draw (0,2)--(3,2);

\draw[dashed] (-1,2.5)--(0,2);
\draw[dashed] (-1,0.5)--(0,1);
\draw[dashed] (-1,2.5) arc(90:270:1);


\node at (2.5,2.5){$C$};
\node at (1.5,1.5){$D_2$};
\node at (0.5,1.5){$D_1$};
\node at (2.5,1.5){$D$};

\end{tikzpicture}}
\caption*{$\PP$ with tail $[C,D]$}
\end{subfigure}\\
\begin{subfigure}[t]{1\textwidth}
\centering
\resizebox{0.32\textwidth}{!}{
\begin{tikzpicture}
\draw (0,0)--(2,0);
\draw (0,1)--(2,1);

\draw[dashed] (-1,1.5)--(0,1);
\draw[dashed] (-1,-0.5)--(0,0);
\draw[dashed] (-1,1.5) arc(90:270:1);

\draw (0,0)--(0,1);
\draw (1,0)--(1,1);
\draw (2,0)--(2,1);
%

\node at (0.5,0.5){$D_1$};
\node at (1.5,0.5){$D_2$};
\end{tikzpicture}}
\caption*{The polyomino $\PP'$ }
\end{subfigure}
\caption{}\label{fig:oper}
\end{figure} 
  Now, suppose $\MP$ is pseudo-Gorenstein. Then the length of $I$ is $2$. Moreover, call $\MP'$ the polyomino $\MP\setminus I$ (resp. the polyomino after the collapsing in $I$) if $I$ is a tail (resp. if $I$ is an endcut). It is easy to observe that $\MP'$ is pseudo-Goenstein, too. 
 
 Now, suppose $k+1$ is even, then $\rk \MP'=k-1$ is even, too. By induction hypothesis $\MP'$ is not pseudo-Gorenstein. Hence $\MP$ is not pseudo-Gorenstein.
 
 Let $k+1$ be odd. Then $\MP'$ is pseudo-Gorenstein, and it has a unique facet $F$ of maximum cardinality. Moreover, its  cardinality is $k/2$. Then, there is a unique facet of maximum cardinality $k/2+1$ of $\MP$, that is $F$ with the single cell of $I$.  
 \end{proof}

Motivated by the observation that all the path polyominoes that satisfy Theorem \ref{thm:interval-length-more-than3} have at least a single cell in any interval, and by computational evidence (e.g. all of the polyominoes in Figure \ref{fig:rank6} but (1) and (6)), the following conjecture naturally arises.
\begin{conjecture}\label{conj:generalization}
	Let $\mathcal{P}$ be a simple thin polyomino such that any maximal interval has a single cell. Then $S/I_{\MP}$ is level.
\end{conjecture}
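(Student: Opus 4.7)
The plan is to adapt the proof of Theorem~\ref{thm:interval-length-more-than3} from path polyominoes to arbitrary simple thin polyominoes, exploiting the tree structure of maximal intervals in a simple thin polyomino (thinness forces two intersecting intervals to share exactly one cell, and simplicity forces the resulting intersection graph of intervals to be a tree). First I would choose a leaf interval of $\MP$ as root and order the cells as $C_1, \ldots, C_n$ by a depth-first traversal of this tree, and assign each cell's four corners labels $(a_i, b_i, \ldots)$ in analogy with Notation~\ref{not:path-poly}, taking the diagonal aligned with the traversal direction at each step. Setting up $L$, $R = \KK[\y_1, \ldots, \y_n]$, and $J_\MP$ as in Notation~\ref{not:R-and-J_P}, the quadratic relation $\y_i^2 - \y_j\y_k \in J_\MP$ coming from the inner 2-minor on $C_i$ together with its successor cell in the traversal still holds, so Proposition~\ref{prop:regseq} goes through and $R/J_\MP$ is zero-dimensional.

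The next step is to prove the analogues of Corollary~\ref{cor:standard-monomial-R/J_P} and Lemma~\ref{lem:generators-of-inSocle} in this broader setting. Since $S$-polynomial reductions in the Gr\"obner basis computation for $J_\MP$ localize to pairs of generators coming from adjacent maximal intervals meeting at a single cell, and the local picture at such a change of direction is identical in a simple thin polyomino and in a path, the arguments of Section~\ref{sec:grobner} transfer verbatim. Consequently $\ini_{<}(J_\MP) = (\y_i\y_j : C_i, C_j \text{ lie in a common maximal interval})$, the standard monomials correspond to faces of $\RR_\MP$, and $\soc(R/\ini_{<} J_\MP) = (\y_F : F \in \FF(\RR_\MP))$.

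The main step is to prove that for every facet $F \in \FF(\RR_\MP)$ with $|F| < r(\MP)$ one has $\y_F \notin \ini_{<}(\soc(R/J_\MP))$, which together with Lemma~\ref{lem:inSoc} and Corollary~\ref{cor:maxdeg} forces $\soc(R/J_\MP)$ to be generated in degree $r(\MP)$. By Lemma~\ref{lem:F+1} such an $F$ can be strictly enlarged, and an inspection of how $F$ fails to be maximum yields a non-single cell $C_k \in F$ lying at the intersection of two intervals $I$ and $I'$. The hypothesis provides single cells $D \in I$ and $D' \in I'$, and since a single cell lies in only one interval and hence attacks only cells of that interval, $(F \setminus \{C_k\}) \cup \{D, D'\}$ is again a rook configuration. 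Ideally one then mimics Lemma~\ref{lem:FF'}: find a quadratic relation $\y_k^2 - \y_D \y_{D'} \in J_\MP$ that reduces $\y_k\y_F$ to $\y_{(F\setminus \{C_k\}) \cup \{D,D'\}}$, a standard monomial, proving $\y_F \notin \ini_{<}(\soc(R/J_\MP))$.

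The main obstacle is that the quadratic relation coming directly from $C_k$ is only $\y_k^2 - \y_j\y_l$, where $C_j, C_l$ are the immediate neighbors of $C_k$ in the two intervals and need not be the single cells $D, D'$. One must therefore chain together several one-step moves, migrating the marker through a sequence of intermediate cells of $I$ and $I'$ toward $D$ and $D'$, and each intermediate configuration must remain non-attacking. An induction on the sum of the distances within $I$ from $C_k$ to $D$ and within $I'$ from $C_k$ to $D'$, exploiting that the single-cell hypothesis guarantees termination of the chain, is a natural approach; however, the stair-based counterexamples of Proposition~\ref{prop:stairs-not-level} show that in the absence of sufficient single cells the socle element is forced to be a linear combination of several $\y_{F'}$'s rather than a single monomial. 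The crux of the conjecture is thus to show that having a single cell in every maximal interval rules out precisely these pathological chains, and making this combinatorial control rigorous across all branching patterns of the interval tree is where I expect the real work to lie.
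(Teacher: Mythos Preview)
This statement is a \emph{conjecture} in the paper, not a theorem; the authors present it as an open problem motivated by Theorem~\ref{thm:interval-length-more-than3} and computational evidence up to rank~10, and offer no proof. There is therefore no paper proof to compare your proposal against.

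As an attempt on the conjecture itself, your outline identifies the natural framework but leaves the two essential difficulties unresolved. First, your transfer of Section~\ref{sec:grobner} ``verbatim'' to simple thin polyominoes is not justified: unlike in a path, a cell in a simple thin polyomino can lie in three or four maximal intervals (as in polyominoes~(6), (8), (9) of Figure~\ref{fig:rank6}), and at such a branching cell the labelling of Notation~\ref{not:path-poly}, the choice of the linear forms in $L$, and the $S$-pair reductions all require genuinely new arguments. A depth-first traversal does not by itself produce a vertex labelling with the properties used in Lemma~\ref{lem:inI}. Second, you correctly isolate the crux---that the quadratic relation at a non-single cell $C_k$ moves only to the immediate neighbours $C_{k-1},C_{k+1}$, which need not be the single cells guaranteed by the hypothesis---but your proposed inductive chain is only a hope, not an argument. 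The bad-stair phenomena of Proposition~\ref{prop:stairs-not-level} show exactly that such chains can fail to terminate in a single standard monomial, and you would need a precise combinatorial invariant (not merely ``sum of distances to the single cells'') that strictly decreases along the chain and is blocked from cycling by the single-cell hypothesis. Until both points are handled, what you have is a plausible strategy rather than a proof.
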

\begin{figure}[H]
\centering
\begin{subfigure}{0.14 \textwidth}
\centering
\resizebox{!}{0.6\textwidth}{
\begin{tikzpicture}
\filldraw[red] (0,0)--(1,0)--(1,0)--(1,2)--(0,2);
\filldraw[red] (0,1)--(2,1)--(2,1)--(2,2)--(0,2);
\filldraw[red] (1,1)--(2,1)--(2,1)--(2,4)--(1,4);
\filldraw[red] (1,2)--(3,2)--(3,2)--(3,3)--(1,3);
\draw (0,0) -- (0,4);
\draw (0,0) -- (4,0);
\draw (1,0) -- (1,4);
\draw (0,1) -- (4,1);
\draw (2,0) -- (2,4);
\draw (0,2) -- (4,2);
\draw (3,0) -- (3,4);
\draw (0,3) -- (4,3);
\draw (4,0) -- (4,4);
\draw (0,4) -- (4,4);

\end{tikzpicture}} 
\caption*{(1)}
\end{subfigure}%
\begin{subfigure}{0.14 \textwidth}
\centering
\resizebox{!}{0.6\textwidth}{
\begin{tikzpicture}
\filldraw[red] (0,0)--(1,0)--(1,0)--(1,3)--(0,3);
\filldraw[red] (0,2)--(3,2)--(3,2)--(3,3)--(0,3);
\filldraw[red] (1,2)--(2,2)--(2,2)--(2,4)--(1,4);
\draw (0,0) -- (0,4);
\draw (0,0) -- (4,0);
\draw (1,0) -- (1,4);
\draw (0,1) -- (4,1);
\draw (2,0) -- (2,4);
\draw (0,2) -- (4,2);
\draw (3,0) -- (3,4);
\draw (0,3) -- (4,3);
\draw (4,0) -- (4,4);
\draw (0,4) -- (4,4);

\end{tikzpicture}}\caption*{(2)}
\end{subfigure}%
\begin{subfigure}{0.14 \textwidth}
\centering
\resizebox{!}{0.6\textwidth}{
\begin{tikzpicture}
\filldraw[red] (0,1)--(2,1)--(2,1)--(2,2)--(0,2);
\filldraw[red] (1,0)--(2,0)--(2,0)--(2,4)--(1,4);
\filldraw[red] (1,2)--(3,2)--(3,2)--(3,3)--(1,3);
\draw (0,0) -- (0,4);
\draw (0,0) -- (4,0);
\draw (1,0) -- (1,4);
\draw (0,1) -- (4,1);
\draw (2,0) -- (2,4);
\draw (0,2) -- (4,2);
\draw (3,0) -- (3,4);
\draw (0,3) -- (4,3);
\draw (4,0) -- (4,4);
\draw (0,4) -- (4,4);

\end{tikzpicture}}\caption*{(3)}
\end{subfigure}%
\begin{subfigure}{0.14 \textwidth}
\centering
\resizebox{!}{0.6\textwidth}{
\begin{tikzpicture}
\filldraw[red] (0,0)--(1,0)--(1,0)--(1,2)--(0,2);
\filldraw[red] (0,1)--(3,1)--(3,1)--(3,2)--(0,2);
\filldraw[red] (1,1)--(2,1)--(2,1)--(2,4)--(1,4);
\draw (0,0) -- (0,4);
\draw (0,0) -- (4,0);
\draw (1,0) -- (1,4);
\draw (0,1) -- (4,1);
\draw (2,0) -- (2,4);
\draw (0,2) -- (4,2);
\draw (3,0) -- (3,4);
\draw (0,3) -- (4,3);
\draw (4,0) -- (4,4);
\draw (0,4) -- (4,4);

\end{tikzpicture}}\caption*{(4)}
\end{subfigure}%
\centering
\begin{subfigure}{0.14 \textwidth}
\centering
\resizebox{!}{0.6\textwidth}{
\begin{tikzpicture}
\filldraw[red] (0,0)--(1,0)--(1,0)--(1,2)--(0,2);
\filldraw[red] (0,1)--(4,1)--(4,1)--(4,2)--(0,2);
\filldraw[red] (1,1)--(2,1)--(2,1)--(2,3)--(1,3);
\draw (0,0) -- (0,4);
\draw (0,0) -- (4,0);
\draw (1,0) -- (1,4);
\draw (0,1) -- (4,1);
\draw (2,0) -- (2,4);
\draw (0,2) -- (4,2);
\draw (3,0) -- (3,4);
\draw (0,3) -- (4,3);
\draw (4,0) -- (4,4);
\draw (0,4) -- (4,4);

\end{tikzpicture}} \caption*{(5)}
\end{subfigure}%
\begin{subfigure}{0.14 \textwidth}
\centering
\resizebox{!}{0.6\textwidth}{
\begin{tikzpicture}
\filldraw[red] (0,0)--(1,0)--(1,0)--(1,2)--(0,2);
\filldraw[red] (0,1)--(3,1)--(3,1)--(3,2)--(0,2);
\filldraw[red] (1,1)--(2,1)--(2,1)--(2,3)--(1,3);
\filldraw[red] (2,0)--(3,0)--(3,0)--(3,2)--(2,2);
\draw (0,0) -- (0,3);
\draw (0,0) -- (3,0);
\draw (1,0) -- (1,3);
\draw (0,1) -- (3,1);
\draw (2,0) -- (2,3);
\draw (0,2) -- (3,2);
\draw (3,0) -- (3,3);
\draw (0,3) -- (3,3);

\end{tikzpicture}}\caption*{(6)}
\end{subfigure}%
\begin{subfigure}{0.14 \textwidth}
\centering
\resizebox{!}{0.6\textwidth}{
\begin{tikzpicture}
\filldraw[red] (0,0)--(1,0)--(1,0)--(1,5)--(0,5);
\filldraw[red] (0,2)--(2,2)--(2,2)--(2,3)--(0,3);
\draw (0,0) -- (0,5);
\draw (0,0) -- (5,0);
\draw (1,0) -- (1,5);
\draw (0,1) -- (5,1);
\draw (2,0) -- (2,5);
\draw (0,2) -- (5,2);
\draw (3,0) -- (3,5);
\draw (0,3) -- (5,3);
\draw (4,0) -- (4,5);
\draw (0,4) -- (5,4);
\draw (5,0) -- (5,5);
\draw (0,5) -- (5,5);

\end{tikzpicture}}\caption*{(7)}
\end{subfigure}
\end{figure}
\begin{figure}[H]
\centering
\begin{subfigure}{0.14 \textwidth}
\centering
\resizebox{!}{0.6\textwidth}{
\begin{tikzpicture}
\filldraw[red] (0,1)--(3,1)--(3,1)--(3,2)--(0,2);
\filldraw[red] (1,0)--(2,0)--(2,0)--(2,4)--(1,4);
\draw (0,0) -- (0,4);
\draw (0,0) -- (4,0);
\draw (1,0) -- (1,4);
\draw (0,1) -- (4,1);
\draw (2,0) -- (2,4);
\draw (0,2) -- (4,2);
\draw (3,0) -- (3,4);
\draw (0,3) -- (4,3);
\draw (4,0) -- (4,4);
\draw (0,4) -- (4,4);

\end{tikzpicture}}\caption*{(8)}
\end{subfigure}%
\begin{subfigure}{0.14 \textwidth}
\centering
\resizebox{!}{0.6\textwidth}{
\begin{tikzpicture}
\filldraw[red] (0,0)--(1,0)--(1,0)--(1,2)--(0,2);
\filldraw[red] (0,1)--(4,1)--(4,1)--(4,2)--(0,2);
\filldraw[red] (2,1)--(3,1)--(3,1)--(3,3)--(2,3);
\draw (0,0) -- (0,4);
\draw (0,0) -- (4,0);
\draw (1,0) -- (1,4);
\draw (0,1) -- (4,1);
\draw (2,0) -- (2,4);
\draw (0,2) -- (4,2);
\draw (3,0) -- (3,4);
\draw (0,3) -- (4,3);
\draw (4,0) -- (4,4);
\draw (0,4) -- (4,4);

\end{tikzpicture}} \caption*{(9)}
\end{subfigure}%
\begin{subfigure}{0.14 \textwidth}
\centering
\resizebox{!}{0.6\textwidth}{
\begin{tikzpicture}
\filldraw[red] (0,0)--(1,0)--(1,0)--(1,4)--(0,4);
\filldraw[red] (0,0)--(2,0)--(2,0)--(2,1)--(0,1);
\filldraw[red] (0,2)--(2,2)--(2,2)--(2,3)--(0,3);
\draw (0,0) -- (0,4);
\draw (0,0) -- (4,0);
\draw (1,0) -- (1,4);
\draw (0,1) -- (4,1);
\draw (2,0) -- (2,4);
\draw (0,2) -- (4,2);
\draw (3,0) -- (3,4);
\draw (0,3) -- (4,3);
\draw (4,0) -- (4,4);
\draw (0,4) -- (4,4);

\end{tikzpicture}} \caption*{(10)}
\end{subfigure}%
\begin{subfigure}{0.14 \textwidth}
\centering
\resizebox{!}{0.6\textwidth}{
\begin{tikzpicture}
\filldraw[red] (0,0)--(1,0)--(1,0)--(1,5)--(0,5);
\filldraw[red] (0,1)--(2,1)--(2,1)--(2,2)--(0,2);
\draw (0,0) -- (0,5);
\draw (0,0) -- (5,0);
\draw (1,0) -- (1,5);
\draw (0,1) -- (5,1);
\draw (2,0) -- (2,5);
\draw (0,2) -- (5,2);
\draw (3,0) -- (3,5);
\draw (0,3) -- (5,3);
\draw (4,0) -- (4,5);
\draw (0,4) -- (5,4);
\draw (5,0) -- (5,5);
\draw (0,5) -- (5,5);

\end{tikzpicture}} \caption*{(11)}
\end{subfigure}%
\begin{subfigure}{0.14 \textwidth}
\centering
\resizebox{!}{0.6\textwidth}{
\begin{tikzpicture}
\filldraw[red] (0,0)--(1,0)--(1,0)--(1,4)--(0,4);
\filldraw[red] (0,1)--(3,1)--(3,1)--(3,2)--(0,2);
\draw (0,0) -- (0,4);
\draw (0,0) -- (4,0);
\draw (1,0) -- (1,4);
\draw (0,1) -- (4,1);
\draw (2,0) -- (2,4);
\draw (0,2) -- (4,2);
\draw (3,0) -- (3,4);
\draw (0,3) -- (4,3);
\draw (4,0) -- (4,4);
\draw (0,4) -- (4,4);

\end{tikzpicture}} \caption*{(12)}
\end{subfigure}%
\begin{subfigure}{0.14 \textwidth}
\centering
\resizebox{!}{0.6\textwidth}{
\begin{tikzpicture}
\filldraw[red] (0,0)--(1,0)--(1,0)--(1,3)--(0,3);
\filldraw[red] (0,1)--(4,1)--(4,1)--(4,2)--(0,2);
\draw (0,0) -- (0,4);
\draw (0,0) -- (4,0);
\draw (1,0) -- (1,4);
\draw (0,1) -- (4,1);
\draw (2,0) -- (2,4);
\draw (0,2) -- (4,2);
\draw (3,0) -- (3,4);
\draw (0,3) -- (4,3);
\draw (4,0) -- (4,4);
\draw (0,4) -- (4,4);

\end{tikzpicture}} \caption*{(13)}
\end{subfigure}%
\begin{subfigure}{0.14 \textwidth}
\centering
\resizebox{!}{0.6 \textwidth}{
\begin{tikzpicture}
\filldraw[red] (0,0)--(1,0)--(1,0)--(1,3)--(0,3);
\filldraw[red] (0,1)--(3,1)--(3,1)--(3,2)--(0,2);
\filldraw[red] (2,0)--(3,0)--(3,0)--(3,2)--(2,2);
\draw (0,0) -- (0,3);
\draw (0,0) -- (3,0);
\draw (1,0) -- (1,3);
\draw (0,1) -- (3,1);
\draw (2,0) -- (2,3);
\draw (0,2) -- (3,2);
\draw (3,0) -- (3,3);
\draw (0,3) -- (3,3);

\end{tikzpicture}}\caption*{(14)} 
\end{subfigure}%
\caption{Simple Thin Polyominoes of Rank 6}\label{fig:rank6}
\end{figure}
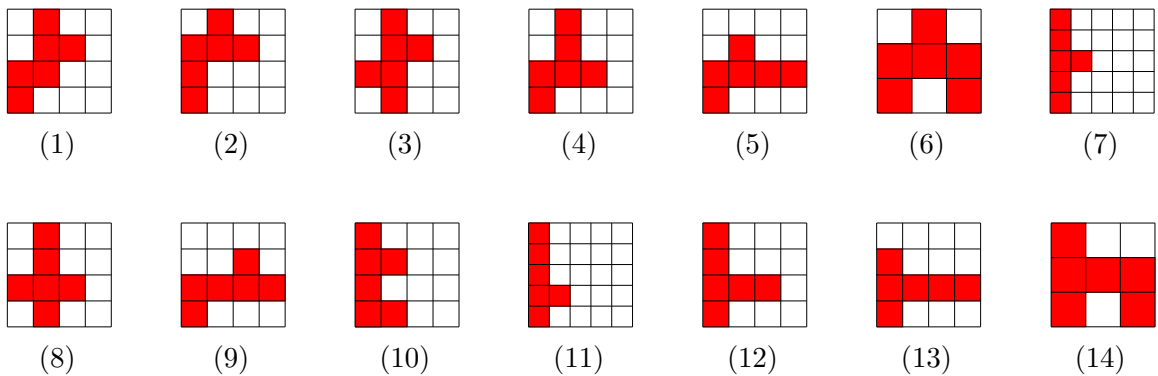

In conclusion, it is of interest the following 
\begin{question}
Is it possible to generalize the concept of (bad) stair to characterize level or pseudo-Gorenstein simple thin polyominoes?
\end{question}

\bibliographystyle{plain}

\end{document}